\setlist[enumerate]{label={\upshape(\roman*)}}
\def\eqref#1{equation~\ref{#1}}
\def\1{\bm{1}}
\def\rmI{{\mathbf{I}}}
\DeclareMathAlphabet{\mathsfit}{\encodingdefault}{\sfdefault}{m}{sl}
\SetMathAlphabet{\mathsfit}{bold}{\encodingdefault}{\sfdefault}{bx}{n}
\def\sR{{\mathbb{R}}}
\newcommand{\E}{\mathbb{E}}
\newcommand{\Cov}{\mathrm{Cov}}
\def\paref#1{(\ref{#1})}
\def\Id{\mathds{1}}
\newcommand{\diff}{\mathrm{d}}
\definecolor{bred}{rgb}{0.8,0,0}
\newtheorem{theorem}{Theorem}[section]
\newtheorem{proposition}[theorem]{Proposition}
\newtheorem{lemma}[theorem]{Lemma}
\newtheorem{corollary}[theorem]{Corollary}
\newtheorem{remark}[theorem]{Remark}
\newtheorem{definition}[theorem]{Definition}
\newtheorem{assumption}{Assumption}
\begin{document}

\title[]{Lipschitz Transport Maps via the F{\"o}llmer Flow}

\author[Y. Dai]{Yin Dai}
\author[Y. Gao]{Yuan Gao}
\author[J. Huang]{Jian Huang}
\author[Y.L. Jiao]{Yuling Jiao}
\author[L.C. Kang]{Lican Kang}
\author[J. Liu]{Jin Liu}

\address{School of Mathematics and Statistics, Wuhan University, Wuhan, 430072, China}

\address{Department of Applied Mathematics, The Hong Kong Polytechnic University, Hong Kong, China}

\address{Department of Applied Mathematics, The Hong Kong Polytechnic University, Hong Kong, China}

\address{School of Mathematics and Statistics, Wuhan University, Wuhan, 430072, China}
\email{yulingjiaomath@whu.edu.cn}

\address{School of Mathematics and Statistics, Wuhan University, Wuhan, 430072, China}

\address{School of Data Science, Chinese University of Hong Kong, Shenzhen, China}

\thanks{${}^{\ddagger}$ Authors are listed in alphabetical order.}

%%%%%%%%%%%%%%%%%%%%%%%%%%%%%%%%%%%%%%%%%%%%%%%%%%%%%%%%%%%%%%%%%%%%%%%%%
\begin{abstract}
Inspired by the construction of the F{\"o}llmer process \cite{follmer1988random}, we construct a unit-time flow on the Euclidean space, termed the F{\"o}llmer flow, whose flow map at time 1 pushes forward a standard Gaussian measure onto a general target measure. We study the well-posedness of the F{\"o}llmer flow and establish the Lipschitz property of the flow map at time 1. We apply the Lipschitz mapping to several rich classes of probability measures on deriving dimension-free functional inequalities and concentration inequalities for the empirical measure.
\end{abstract}

\keywords{Lipschitz transport maps, functional inequalities, empirical measures, Gaussian mixtures.}

\maketitle

%%%%%%%%%%%%%%%%%%%%%%%%%%%%%%%%%%%%%%%%%%%%%%%%%%%%%%%%%%%%%%%%%%%%%%%%%%%%%%%%%%%%%%%%%%%%%%%%%%%%%%%%%%%%%%%%%%%%%%%%%%%%%%%%%%
\section{Introduction}

Functional inequalities, such as Poincar{\'e}-type and Sobolev-type inequalities, are fundamental tools in studying sampling algorithms 
\cite{chewi2020exponential,li2020riemannian,lehec2021langevin, chewi2022analysis,lu2022explicit, cattiaux2022functional,andrieu2022comparison,andrieu2022poincar}, stochastic optimization \cite{raginsky2017non,xu2018global,kinoshita2022improved,li2022sharp}, and score-based generative modeling 
\cite{block2020generative,lee2022convergence,koehler2022statistical,wibisono2022convergence} in machine learning, statistics, and applied probability. Establishing functional inequalities with dimension-free constants has attracted widespread attention in various fields of mathematics like probability, geometry, and analysis \cite{wang2016functional,bardet2018functional, courtade2020bounds,chen2021dimension,cattiaux2022functional, lee2018kannan,chen2021almost,klartag2022bourgain, jambulapati2022slightly,conforti2022weak, klartag2023logarithmic}. 
The seminal work \cite{bakry1985diffusions} introduces the Bakry-{\'E}mery criterion to verify dimension-free Poincar{\'e} and log-Sobolev inequalities for strongly log-concave measures. 
Holley-Stroock perturbation principle \cite{holley1987logarithmic} implies any bounded perturbation of a strongly log-concave measure satisfies dimension-free Poincar{\'e} and log-Sobolev inequalities. Up to now, it remains an attractive topic to explore general classes of probability measures that satisfy dimension-free functional inequalities.

One of the basic principles for proving functional inequalities regarding probability measures on the Euclidean space is Lipschitz changes of variables between a source and a target probability measures \cite{caffarelli2000monotonicity, otto2000generalization, bobkov2010perturbations, kim2012generalization, colombo2017lipschitz, fathi2020proof, klartag2021spectral, mikulincer2021brownian, mikulincer2022lipschitz, neeman2022lipschitz, chewi2022entropic, shenfeld2022exact}. 
For instance, provided that $\mu$ and $\nu$ are Borel probability measures defined on $\mathbb R^d$, we would seek a Lipschitz transport map $\varphi: \mathbb R^d \rightarrow \mathbb R^d$ such that $\nu$ can be represented as a push-forward measure under $\varphi$, namely $\nu=\mu \circ \varphi^{-1}$. $\mu$ is transported onto $\nu$ in the sense that for every Borel set $B \subseteq \mathbb R^d$, $\nu(B)=\mu(\varphi^{-1}(B) )$. The Lipschitz nature of transport map $\varphi$ plays a profound impact on transferring desirable analytic results from the source measure $\mu$ to the target measure $\nu$. However, existence of such Lipschitz transport maps is not guaranteed unless proper convexity restrictions are placed on the measures. Our main goal is to extend quantitative Lipschitz regularity of transport maps to measures that are not necessarily strongly log-concave.

Let us first recall the celebrated Caffarelli's contraction theorem \cite[Theorem 2]{caffarelli2000monotonicity}.
% \begin{theorem}
Let $\mu (\diff x) = \exp(-U(x)) \diff x$ and $\nu (\diff x) = \exp(-W(x)) \diff x$ be two probability measures defined on $\mathbb R^d$ with $U, W \in C^2(\sR^d)$. Suppose that $\nabla^2 U(x) \preceq \beta \mathbf{I}_d$ and $\nabla^2 W(x) \succeq \alpha \mathbf{I}_d \succ 0$. 
Then the optimal transport map $\varphi_{\mathrm{opt}}:=\nabla \psi$ from $\mu$ to $\nu$ is $\sqrt{\beta/\alpha}$-Lipschitz, where $\psi: \mathbb R^d \rightarrow \mathbb{R}$ is the convex Brenier potential. 
The optimal transport map $\varphi_{\mathrm{opt}}: \mathbb R^d \rightarrow \mathbb R^d$ pushes forward $\mu$ onto $\nu$ in the sense that $\nu = \mu \circ ( \varphi_{\mathrm{opt}} )^{-1}$.
In particular, if $\gamma_d$ is the standard Gaussian measure on $\mathbb R^d$ and probability measure $\mu$ has a log-concave density with respect to $\gamma_d$,
then there exists a 1-Lipschitz map $\varphi_{\mathrm{opt}}$ such that $\nu = \gamma_d \circ \left(\varphi_{\mathrm{opt}} \right)^{-1}$.
The 1-Lipschitz transport map $\varphi_{\mathrm{opt}}$ enables dimension-free functional inequalities to be transferred from $\gamma_d$ to $\nu$.
Recently, \cite{mikulincer2021brownian} defines a Brownian transport map, based on the F{\"o}llmer process defined in Definition \ref{def:follmer-sde}, that transports the infinite-dimensional Wiener measure onto probability measures on the Euclidean space. 
Lipschitz properties of the Brownian transport map are investigated extensively while no analogous results for optimal transport maps are known.
\cite{neeman2022lipschitz} and \cite{mikulincer2022lipschitz} also utilize a Lipschitz transport map along the reverse heat flow, which previously appears in \cite{otto2000generalization} and is further studied by \cite{kim2012generalization}, to establish functional inequalities, perform eigenvalues comparisons, and study domination of distribution functions.
On the Caffarelli’s contraction theorem, \cite{fathi2020proof} and \cite{chewi2022entropic} provide new proofs using the entropic interpolation between the source and the target measures.

In this work, we construct a flow over the unit time interval on the Euclidean space, named the F{\"o}llmer flow as in Definition \ref{def:follmer-flow} and Theorem \ref{main-thm1}. 
Our construction is greatly enlightened by F{\"o}llmer's derivation of the F{\"o}llmer process.
Then we define and analyze a new transport map, along the F{\"o}llmer flow, which pushes forward the standard Gaussian measure to a general measure satisfying mild regularity assumptions (see Assumptions \ref{condn:well-defined}, \ref{condn:semi-log-convex} and \ref{condn:semi-log-concave}).
The well-posedness of the F{\"o}llmer flow and the Lipschitz property of its flow map at time 1 are rigorously investigated under these regularity assumptions. 
By virtue of the Lipschitz changes of variables principle, we prove dimension-free $\Psi$-Sobolev inequalities, isoperimetric inequalities, $q$-Poincar{\'e} inequalities and sharp non-asymptotic  concentration bounds for the empirical measure. 
% Eigenvalues comparisons and distribution majorization are exhibited as additional applications. 
Furthermore, we shall emphasize that both the F{\"o}llmer flow and its flow map possess much computational flexibility in terms of the analytic expression of its velocity field, which we believe may be of independent interest, to develop sampling algorithms and generative models with theoretical guarantees. 

 % \vspace{11pt}
\subsection{Related work}
% \noindent \textbf{Related work:} 
The work is notably relevant to the Brownian transport map built upon the F{\"o}llmer process \cite{mikulincer2021brownian} and the transport map defined via the reverse heat flow \cite{otto2000generalization, kim2012generalization, neeman2022lipschitz, mikulincer2022lipschitz}. 
The Brownian transport map, acquired from a strong solution of the F{\"o}llmer process, pushes forward the Wiener measure onto probability measures on the Euclidean space. 
The infinite-dimensional nature of the Brownian transport map is quite different from that of the F{\"o}llmer flow which is defined on the finite-dimensional Euclidean space. 
To produce the randomness within the target measure, the Brownian transport map leverages the randomness of the path while the F{\"o}llmer flow makes use of the randomness delivered by the source measure.
Meanwhile, \cite{mikulincer2022lipschitz} studies a transport map along the reverse heat flow from the standard Gaussian measure to a target measure, constructed by \cite{otto2000generalization} and \cite{kim2012generalization}, as well as its Lipschitz property. 
The transport map investigated in the work shares a similar Lipschitz property with the transport map associated with the reverse heat flow.
Nonetheless, the transport map investigated by \cite{kim2012generalization} and \cite{mikulincer2022lipschitz} is deduced via a limiting argument, thus has no explicit expression.
Under Assumptions \ref{condn:well-defined}, \ref{condn:semi-log-convex} and \ref{condn:semi-log-concave}, our considered transport map could be expressed as the flow map of the well-posed F{\"o}llmer flow at time $t = 1$ in a simple and explicit form. 
Towards connections between the flows, the F{\"o}llmer flow over time interval $[0, 1)$ (without time $1$) is equivalent to the reverse heat flow through a deterministic change of time, as revealed in Lemmas \ref{lm:sde-time-change} and \ref{lm:ode-time-change}. 
Technically, the equivalence cannot ensure well-posedness of the F{\"o}llmer flow at time $1$. We 
explicitly extend the flow to time $1$ by deriving a uniform lower bound on the Jacobian matrix of the velocity field via the Cram{\'e}r-Rao inequality.
Additionally, it is worth mentioning that \cite{albergo2023building} and \cite{albergo2023stochastic} introduce a unit-time normalizing flow, relevant to the F{\"o}llmer flow, from the perspective of stochastic interpolation between the Gaussian measure and a target measure. 
Nonetheless, the well-posedness of this normalizing flow is not studied in the scope of their work.

%%%%%%%%%%%%%%%%%%%%%%%%%%%%%%%%%%%%%%%%%%%%%%%%%%%%%%%%%%%%%%%%%%%%%%%%%%%%%%%%%%%%%%%%%%%%%%%%%
\subsection{Notations}
For any integer $d\geq 1$, the Borel $\sigma$-algebra of $\mathbb R^d$ is denoted by $\mathcal B(\mathbb R^d)$.
For $x,y\in \mathbb R^d$, define $\left<x, y\right> := \sum_{i=1}^d x_iy_i$ and the Euclidean norm $|x|:=\left<x,x \right>^{1/2}$. Denote by $\mathbb S^{d-1} :=\{x\in \mathbb R^d: |x|=1 \}$. The operator norm of a matrix $M \in \mathbb R^{m\times n}$ is denoted by $\|M \|_{\mathrm{op}} := \sup_{x\in \mathbb R^n, |x|=1} |Mx|$ and $M^{\top}$ is the transpose of $M$. Let $f:\mathbb R^d \rightarrow \mathbb R$ be a twice continuously differentiable function. Denote by $\nabla f, \nabla^2 f$ and $\Delta f$ the gradient of $f$, the Hessian of $f$ and the Laplacian of $f$, respectively. Let $\gamma_d$ denote the standard Gaussian measure on $\mathbb R^d$, i.e.,
$\gamma_d(\diff x) := (2\pi)^{-d/2} \exp(-|x|^2 /2) \diff x$.
Let $N (0, \rmI_d)$ stand for a $d$-dimensional Gaussian random variable with mean $0$ and covariance $\mathbf{I}_d$ being the $d\times d$ identity matrix. Moreover, we use $\phi(x)$ to denote its probability density function with respect to the Lebesgue measure.

The set of probability measures defined on a measurable space $(\mathbb R^d, \mathcal B(\mathbb R^d))$ is denoted as $\mathcal P(\mathbb R^d)$.
For any $\mathbb R^d$-valued random vector, $\mathbb E[X]$ is used to denote its expectation.
We say that $\Pi$ is a transference plan of $\mu$ and $\nu$ if it is a probability measure on $(\mathbb R^d \times \mathbb R^d, \mathcal B(\mathbb R^d) \times \mathcal B(\mathbb R^d))$ such that for any Borel set $A$ of $\mathbb R^d$, $\Pi(A \times \mathbb R^d)=\mu(A)$ and $\Pi(\mathbb R^d \times A)=\nu(A)$.
We denote $\mathcal C(\mu,\nu)$ the set of transference plans of $\mu$ and $\nu$. Furthermore, we say that a couple of $\mathbb R^d$-valued random variables $(X,Y)$ is a coupling of $\mu$ and $\nu$ if there exists $\Pi \in \mathcal C(\mu,\nu)$ such that $(X,Y)$ is distributed according to $\Pi $. For two probability measures $\mu,\nu \in \mathcal P(\mathbb R^d)$, the Wasserstein distance of order $p \geq 1$ is defined as
 \begin{equation*}
    W_p(\mu,\nu) := \inf_{\Pi \in \mathcal C(\mu,\nu)} \left(\int_{\mathbb R^d \times \mathbb R^d}  |x-y|^p \,\Pi(\diff x, \diff y) \right)^{1/p}.
 \end{equation*}
Let $\mu,\nu \in \mathcal P(\mathbb R^d)$. The relative entropy of $\nu$ with respect to $\mu$ is defined by
\begin{equation*}
H(\nu \mid \mu) =
\begin{cases}
 \int_{\mathbb R^d} \log \left( \frac{\diff \nu}{\diff \mu} \right) \nu(\diff x), & \text{if $\nu \ll \mu $, } \\
 +\infty, & \text{otherwise.} 
\end{cases}
\end{equation*}

%%%%%%%%%%%%%%%%%%%%%%%%%%%%%%%%%%%%%%%%%%%%%%%%%%%%%%%%%%%%%%%%%
\section{Main results}
We first present two definitions to characterize convexity properties of probability measures and some useful notations.
\begin{definition}[\cite{cattiaux2014semi, mikulincer2021brownian}]
\label{def:semi-log-concave}
A probability measure $\mu (\diff x) = \exp(-U(x)) \diff x$ is $\kappa$-semi-log-concave for some $\kappa \in \sR$ if its support $\Omega \subseteq \sR^d$ is convex and $U \in C^2(\Omega)$ satisfies
\begin{equation*}
\nabla^2 U(x) \succeq \kappa \mathbf{I}_d, \quad \forall x \in \Omega.
\end{equation*}
\end{definition}

\begin{definition}[\cite{eldan2018regularization}]
\label{def:semi-log-convex}
A probability measure $\mu (\diff x) = \exp(-U(x)) \diff x$ is $\beta$-semi-log-convex for some $\beta > 0$ if its support $\Omega \subseteq \sR^d$ is convex and $U \in C^2(\Omega)$ satisfies
\begin{equation*}
\nabla^2 U(x) \preceq \beta \mathbf{I}_d, \quad \forall x \in \Omega.
\end{equation*}
\end{definition}

Let $\nu(\diff x) = p(x) \diff x$ be a probability measure on $\sR^d$
and define an operator $(\EuScript Q_t)_{t\in [0,1]}$, acting on function $f:\mathbb R^d \rightarrow \mathbb R$ by
\begin{align*}
    \EuScript{Q}_{1-t} f (x) 
     := \int_{\mathbb R^d} \varphi^{t x, 1-t^2} (y) f(y) \diff y  = \int_{\mathbb{R}^d} f \left(t x + \sqrt{1 - t^2} z \right) \diff \gamma_d (z)
\end{align*}
where $\varphi^{tx, 1-t^2} (y)$ is the density of the $d$-dimensional Gaussian measure with mean $tx$ and covariance $(1-t^2)\mathbf{I}_d $. 

Our first result is that we construct a flow over the unit time interval, named the F{\"o}llmer flow, that pushes forward a standard Gaussian measure $\gamma_d$ to a general target measure $\nu$ at time $t=1$.
Before rigorously defining the F{\"o}llmer flow, let us specify several regularity assumptions that would ensure well-definedness and well-posedness of the F{\"o}llmer flow.

\begin{assumption} \label{condn:well-defined} 
The probability measure $\nu$ has a finite third moment and is absolutely continuous with respect to the standard Gaussian measure $\gamma_d$.
\end{assumption}

\begin{assumption} \label{condn:semi-log-convex}
The probability measure $\nu$ is $\beta$-semi-log-convex for some $\beta >0$.
\end{assumption}

\begin{assumption} \label{condn:semi-log-concave} 
Let $D := (1 / \sqrt{2}) \mathrm{diam} (\mathrm{supp} (\nu))$. The probability measure $\nu$ satisfies one or more of the following assumptions:
\begin{itemize}
\item[(i)]    $\nu$ is $\kappa$-semi-log-concave for some $\kappa > 0$ with $D \in (0, \infty]$; 
\item[(ii)]   $\nu$ is $\kappa$-semi-log-concave for some $\kappa \le 0$ with $D \in (0, \infty)$; 
\item[(iii)]  $\nu = N(0,\sigma^2 \mathbf{I}_d) * \rho$  where $\rho$ is a probability measure supported on a ball of radius $R$ on $\mathbb R^d$. 
\end{itemize}
\end{assumption}

Let us move to a formal definition of the F{\"o}llmer flow and the exhibition of its well-posedness. A complete exposition would be found in Section \ref{sec:follmer-flow}.  
\begin{definition}
\label{def:follmer-flow}
Suppose that probability measure $\nu$ satisfies Assumption \ref{condn:well-defined}. If $(X_t)_{t \in [0, 1]}$ solves the initial value problem (IVP)
\begin{equation}\label{main-ode}
    \frac{\diff X_t} {\diff t} = V(t, X_t), \quad X_0 \sim \gamma_d, \quad t \in [0,1]
\end{equation}
where the velocity field $V$ is defined by
\begin{equation}
\label{eq:vector-field}
    V(t, x) :=\frac{\nabla \log \EuScript{Q}_{1-t} r(x)}{t}, \qquad \forall t\in (0,1]
\end{equation}
with $V(0, x) :=\mathbb E_{\nu} [X], r(x) := \frac{\diff \nu}{\diff \gamma_d} (x)$. We call $(X_t)_{t \in [0, 1]}$ a F{\"o}llmer flow and $V(t, x)$ a F{\"o}llmer velocity field associated to $\nu$. 
\end{definition}

\begin{theorem}[Well-posedness] \label{main-thm1}
Suppose that Assumptions \ref{condn:well-defined}, \ref{condn:semi-log-convex} and \ref{condn:semi-log-concave} hold.
Then the F{\"o}llmer flow $(X_t)_{t \in [0, 1]}$ associated to $\nu$ is a unique solution to the IVP \paref{main-ode}.
Moreover, the push-forward measure $\gamma_d \circ (X_1^{-1}) = \nu$.
\end{theorem}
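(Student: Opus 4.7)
The plan is to handle the two claims of Theorem~\ref{main-thm1} separately, both resting on the stochastic interpolation $X_t=tY+\sqrt{1-t^2}\,Z$ with $Y\sim\nu$ and $Z\sim\gamma_d$ independent. A short completion of squares in the Gaussian exponent shows that the marginal density of this $X_t$ equals $\rho_t(x)=\phi(x)\EuScript Q_{1-t}r(x)$, so the conditional laws of $Y$ and of $Z$ given $X_t=x$ have explicit densities that one can differentiate. For the push-forward identity $\gamma_d\circ X_1^{-1}=\nu$, I would then verify the continuity equation $\partial_t\rho_t+\nabla\cdot(\rho_t V(t,\cdot))=0$ directly, using the simplification $\rho_t V=\phi\,\nabla\EuScript Q_{1-t}r/t$ together with the time-reversed heat identity $t\,\partial_t\EuScript Q_{1-t}r(x)=\langle x,\nabla\EuScript Q_{1-t}r(x)\rangle-\Delta\EuScript Q_{1-t}r(x)$ (which follows from Gaussian integration by parts in the definition of $\EuScript Q$). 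Once paired with the Lipschitz flow built below, the method of characteristics then gives $\mathrm{Law}(X_t)=\rho_t$ on all of $[0,1]$, and the conclusion at $t=1$ is $\rho_1=\phi\cdot r=\diff\nu/\diff x$.

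For existence and uniqueness of the IVP, the strategy is to establish a uniform Lipschitz bound on $V(t,\cdot)$ on $[0,1]\times\sR^d$, after which Picard--Lindel\"of yields a unique $C^1$ solution. Two successive applications of Tweedie's identity give the representation
\begin{equation*}
\nabla_x V(t,x)=\frac{1}{t(1-t^2)}\bigl(\Cov[Z\mid X_t=x]-t^2\mathbf I_d\bigr),\qquad t\in(0,1),
\end{equation*}
so the estimate reduces to two-sided control of the conditional covariance. A direct computation shows that $Z\mid X_t=x$ has negative log-density whose Hessian is $\mathbf I_d+\tfrac{1-t^2}{t^2}\nabla^2 U(y)$ at the appropriate argument $y$. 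Assumption~\ref{condn:semi-log-concave} makes this Hessian bounded below and Brascamp--Lieb supplies $\Cov[Z\mid X_t=x]\preceq\tfrac{t^2}{\kappa(1-t^2)+t^2}\mathbf I_d$; Assumption~\ref{condn:semi-log-convex} bounds the same Hessian above and the Cram\'er--Rao inequality supplies the matching lower bound $\Cov[Z\mid X_t=x]\succeq\tfrac{t^2}{\beta(1-t^2)+t^2}\mathbf I_d$. Plugging these into the displayed formula pinches $\nabla_x V(t,x)$ between $\tfrac{t(1-\beta)}{\beta(1-t^2)+t^2}\mathbf I_d$ and $\tfrac{t(1-\kappa)}{\kappa(1-t^2)+t^2}\mathbf I_d$, uniformly in $x$. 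The left endpoint $t=0$ is handled by the Taylor expansion $\EuScript Q_{1-t}r(x)=1+t\,\langle x,\E_\nu[X]\rangle+O(t^2)$, which extends $V$ continuously with $V(0,x)=\E_\nu[X]$ matching the prescribed value.

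The main obstacle is controlling $\nabla_x V$ at the right endpoint $t=1$, where the prefactor $1/[t(1-t^2)]$ in the Jacobian formula diverges. Brascamp--Lieb alone cannot cancel this singularity, since it only bounds the bracket $\Cov[Z\mid X_t=x]-t^2\mathbf I_d$ from above; the matching Cram\'er--Rao lower bound is what forces the bracket to vanish at rate $O(1-t^2)$ and delivers the finite endpoint values $(1-\beta)\mathbf I_d\preceq\nabla_x V(1,x)\preceq(1-\kappa)\mathbf I_d$. Verifying this cancellation uniformly in $x$ across all three sub-cases of Assumption~\ref{condn:semi-log-concave} — in particular the Gaussian-convolution case~(iii), where the pointwise bounds on $\nabla^2 U$ are not available globally and one must first condition on the compactly supported mixing component before applying the Hessian estimates — is where I expect the bulk of the technical bookkeeping to live.
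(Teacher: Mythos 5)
Your proposal follows essentially the same route as the paper's proof. The representation of $\nabla_x V$ as a rescaled conditional covariance minus a multiple of $\mathbf I_d$ is exactly the paper's Lemma~\ref{lem-cov}, and pinching that covariance between the Brascamp--Lieb upper bound (using Assumption~\ref{condn:semi-log-concave}) and the Cram\'er--Rao lower bound (using Assumption~\ref{condn:semi-log-convex}), so that the $1/(1-t^2)$ singularity at $t=1$ cancels and $\nabla_x V$ stays bounded on all of $[0,1]\times\sR^d$, is exactly the content of Theorem~\ref{thm1} and Corollaries~\ref{cor1}--\ref{cor2}; your observation that Cram\'er--Rao is what makes the bracket vanish at rate $O(1-t^2)$ is precisely the paper's stated reason for introducing it. The only real departure is cosmetic: you verify the continuity equation directly on the forward interpolation $X_t = tY + \sqrt{1-t^2}Z$ via the heat-type identity $t\,\partial_t\EuScript Q_{1-t}r = \langle x,\nabla\EuScript Q_{1-t}r\rangle - \Delta\EuScript Q_{1-t}r$, whereas the paper establishes the marginal law through the time-reversed SDE \paref{eq:vp-sde-tr} and Proposition~\ref{prop:vp-ode-eps} and then inverts the reverse flow map; the two are equivalent, and your version is arguably more self-contained since it avoids passing through the singular SDE. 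One small caveat about the middle paragraph: when $\kappa<0$ (Assumption~\ref{condn:semi-log-concave}-(ii)), the Brascamp--Lieb bound $\Cov[Z\mid X_t=x]\preceq \tfrac{t^2}{\kappa(1-t^2)+t^2}\mathbf I_d$ is only meaningful for $t\ge\sqrt{\kappa/(\kappa-1)}$, where the denominator is positive; for smaller $t$ one must fall back to the bounded-support estimate $\Cov[Z\mid X_t=x]\preceq\tfrac{t^2}{1-t^2}D^2\mathbf I_d$, which is exactly the cross-over at $t_0$ in the paper's Corollary~\ref{cor2}. You do flag the case analysis as technical bookkeeping in the last paragraph, but the uniform-in-$t$ phrasing of the pinching step should be restricted accordingly.
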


The following results show that the F{\"o}llmer flow map at time $t = 1$ is Lipschitz when the target measure satisfies either the strong log-concavity assumption or the bounded support assumption.

\begin{theorem}[Lipschitz mapping] \label{main-thm2}
 Assume that Assumptions \ref{condn:well-defined}, \ref{condn:semi-log-convex}, \ref{condn:semi-log-concave}-(i) or \ref{condn:semi-log-concave}-(ii) hold.
\begin{itemize}
    \item[(i)] If $\kappa D^2 \ge 1 $, then $X_1(x)$ is a Lipschitz mapping with constant $\tfrac{1}{\sqrt{\kappa}}$, i.e.,
    \begin{equation*}
    \|\nabla X_1(x) \|_{\mathrm{op}} \le \frac{1}{\sqrt{\kappa}}, \quad \forall x \in \sR^d.
    \end{equation*}

    \item[(ii)] If $\kappa D^2 <1 $, then $X_1(x)$ is a Lipschitz mapping with constant $\exp\left(\frac{1-\kappa D^2}{2} \right)D $, i.e.,
    \begin{equation*}
    \|\nabla X_1(x) \|_{\mathrm{op}} \le \exp\left(\frac{ 1-\kappa D^2}{2} \right) D, \quad \forall x \in \sR^d.
    \end{equation*}
\end{itemize}
\end{theorem}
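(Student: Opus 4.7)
The plan is to control the Jacobian of the F{\"o}llmer flow at time $1$ via the variational equation and a pointwise spectral bound on $\nabla V(t, \cdot)$, obtained by identifying that spectral bound with a conditional covariance and estimating the latter by combining the Brascamp--Lieb inequality with a bounded-support control.

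By Theorem \ref{main-thm1} and the smoothness of $V$, I would differentiate \eqref{main-ode} in the initial datum to obtain the variational equation $\tfrac{\diff}{\diff t} \nabla X_t(x) = \nabla V(t, X_t(x))\, \nabla X_t(x)$ with $\nabla X_0(x) = \mathbf{I}_d$. Because $\nabla V(t,y) = t^{-1} \nabla^2 \log \EuScript{Q}_{1-t} r(y)$ is symmetric, differentiating $|\nabla X_t(x) u|^2$ for a fixed $u \in \mathbb{S}^{d-1}$ and applying Gr{\"o}nwall shows that any uniform bound $\nabla V(t, y) \preceq \lambda(t) \mathbf{I}_d$ in $y$ implies $\|\nabla X_1(x)\|_{\mathrm{op}} \le \exp(\int_0^1 \lambda(t)\,\diff t)$. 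The task therefore reduces to producing such a $\lambda$.

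Rewriting $\EuScript{Q}_{1-t} r$ as a Gaussian smoothing of $\nu$ via a direct change of variables yields $\EuScript{Q}_{1-t} r(x) = (2\pi)^{d/2} t^{-d} e^{|x|^2/2} (p \ast \phi_{\sigma_t^2})(x/t)$ with $\sigma_t^2 := (1-t^2)/t^2$, and hence, using the classical identity $\nabla^2 \log(p \ast \phi_{\sigma^2})(y) = -\sigma^{-2} \mathbf{I}_d + \sigma^{-4} \Cov(\bar\nu_{\sigma,y})$ where $\bar\nu_{\sigma,y}$ is the posterior law of $Y \sim \nu$ given $Y + \sigma Z = y$,
\begin{equation*}
\nabla V(t, x) \;=\; \frac{t}{1-t^2}\left[\frac{1}{1-t^2} \Cov(\bar\nu_{t,x}) - \mathbf{I}_d\right].
\end{equation*}
The posterior potential $U(y) + |y-x/t|^2/(2\sigma_t^2)$ has Hessian $\succeq (\kappa + \sigma_t^{-2}) \mathbf{I}_d$, so whenever this is positive the Brascamp--Lieb inequality gives $\Cov(\bar\nu_{t,x}) \preceq (1-t^2)/[\kappa + (1-\kappa) t^2]\,\mathbf{I}_d$, while the bounded-support portion of Assumption \ref{condn:semi-log-concave} furnishes the complementary bound $\Cov(\bar\nu_{t,x}) \preceq D^2 \mathbf{I}_d$.

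In case (i), where $\kappa D^2 \ge 1$, the Brascamp--Lieb bound is uniformly the sharper one on $[0,1]$, giving $\lambda(t) = t(1-\kappa)/[\kappa + (1-\kappa) t^2]$ with antiderivative $\tfrac{1}{2}\log[\kappa + (1-\kappa)t^2]$; this integrates to $-\tfrac{1}{2}\log \kappa$ and the constant $1/\sqrt{\kappa}$ follows. In case (ii), where $\kappa D^2 < 1$, the two bounds cross at $t_\ast^2 = (1-\kappa D^2)/[1 + (1-\kappa) D^2]$, so I would apply the bounded-support bound on $[0, t_\ast]$ and Brascamp--Lieb on $[t_\ast, 1]$. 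The substitution $u = 1-t^2$ makes both resulting integrals elementary, and a cancellation of intermediate $\tfrac{1}{2}\log[1+(1-\kappa)D^2]$ terms leaves $\int_0^1 \lambda\,\diff t = (1-\kappa D^2)/2 + \log D$, delivering the claimed constant $\exp((1-\kappa D^2)/2)\,D$. The main delicacy lies in orchestrating this splitting uniformly over the subcases of Assumption \ref{condn:semi-log-concave} and in verifying the Brascamp--Lieb domain of validity on $[t_\ast, 1]$ when $\kappa \le 0$; this reduces to checking $\kappa + (1-\kappa) t_\ast^2 > 0$, which follows from the algebraic identity $\kappa + (1-\kappa) t_\ast^2 = 1/[1+(1-\kappa)D^2]$, positive under the hypothesis $\kappa D^2 < 1$.
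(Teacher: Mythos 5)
Your proposal is correct and follows essentially the same route as the paper: the variational/Gr\"onwall reduction to $\exp\bigl(\int_0^1 \lambda(t)\,\diff t\bigr)$, the identification of $\nabla V(t,x)$ with a rescaled posterior covariance, the Brascamp--Lieb bound coupled with the crude bounded-support bound $\Cov \preceq D^2 \mathbf{I}_d$, and the time-splitting at $t_\ast$ when $\kappa D^2 < 1$ are all exactly the paper's Lemma \ref{lem-cov}, Theorem \ref{thm1}, and Corollaries \ref{cor1}--\ref{cor2} (your $\bar\nu_{\sigma_t,\, x/t}$ is precisely the paper's $p^{tx,1-t^2}$, and your $t_\ast$ is the paper's $t_0$). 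The only cosmetic difference is that you reach the covariance identity via the convolution form $\EuScript{Q}_{1-t}r(x) \propto e^{|x|^2/2}(p \ast \phi_{\sigma_t^2})(x/t)$ and the standard denoising-posterior Hessian formula, whereas the paper differentiates $\log \EuScript{Q}_{1-t}r$ directly; and you fold the $\kappa>0$ and $\kappa\le 0$ cases into one argument with the algebraic check $\kappa + (1-\kappa)t_\ast^2 = 1/[1+(1-\kappa)D^2] > 0$, whereas the paper presents them as separate Corollaries \ref{cor1} and \ref{cor2} with the same computation.
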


\begin{theorem}[Gaussian mixtures] \label{main-thm3}
Assume that Assumptions \ref{condn:well-defined}, \ref{condn:semi-log-convex} and \ref{condn:semi-log-concave}-(iii) hold.
Then $X_1(x)$ is a Lipschitz mapping with constant $\sigma \exp \left( \frac{R^2}{2 \sigma^2} \right)$, i.e.,
\begin{equation*}
    \|\nabla X_1(x) \|_{\mathrm{op}} \leq \sigma \exp\left( \frac{R^2}{2 \sigma^2} \right), \quad \forall x \in \sR^d.
\end{equation*}
\end{theorem}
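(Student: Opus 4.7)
The plan is to bound $\|\nabla X_1(x)\|_{\mathrm{op}}$ by controlling the Jacobian $J_t := \nabla X_t(x)$ along the F{\"o}llmer flow via an integrated upper estimate on the eigenvalues of $\nabla V(t,\cdot)$. The first step would be to rewrite the operator $\EuScript{Q}_{1-t} r$ in a more transparent form. Using the change of variables $y = tx+\sqrt{1-t^2}\,z$ and completing the square in the Gaussian exponents gives
\begin{equation*}
    \phi(x)\,\EuScript{Q}_{1-t} r(x) \;=\; \int_{\sR^d} \varphi^{ty,\,1-t^2}(x)\, p(y)\, dy \;=\; q_t(x),
\end{equation*}
where $p$ is the density of $\nu$ and $q_t$ denotes the density of $tY+\sqrt{1-t^2}\,Z$ with $Y\sim\nu$ and $Z\sim\gamma_d$ independent. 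Specialising to $\nu = N(0,\sigma^2 \mathbf{I}_d)*\rho$, a second Gaussian convolution exhibits $q_t$ as a Gaussian mixture
\begin{equation*}
    q_t(x) \;=\; \int_{\sR^d} \varphi^{tw,\,\tau_t^2}(x)\,\rho(dw), \qquad \tau_t^2 \;:=\; t^2\sigma^2 + (1-t^2),
\end{equation*}
whose mean parameters $tw$ lie in the ball of radius $tR$. Combined with $\nabla^2\log\phi = -\mathbf{I}_d$, this yields the clean representation $\nabla V(t,x) = \tfrac{1}{t}\bigl(\nabla^2\log q_t(x) + \mathbf{I}_d\bigr)$.

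Next I would compute $\nabla^2 \log q_t(x)$ via the standard Tweedie/Stein identity for Gaussian mixtures. Letting $\rho_x(dw) \propto \varphi^{tw,\tau_t^2}(x)\,\rho(dw)$ denote the Bayesian posterior on the mixing parameter, a direct calculation gives
\begin{equation*}
    \nabla^2 \log q_t(x) \;=\; -\tfrac{1}{\tau_t^2}\,\mathbf{I}_d + \tfrac{1}{\tau_t^4}\,\mathrm{Cov}_{\rho_x}(tW).
\end{equation*}
Because $\rho_x$ remains supported on the ball of radius $R$, the random vector $tW$ takes values in the ball of radius $tR$, so $\mathrm{Cov}_{\rho_x}(tW) \preceq t^2 R^2\,\mathbf{I}_d$. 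Assembling the pieces, for every $x\in\sR^d$ and $t\in(0,1]$,
\begin{equation*}
    \nabla V(t,x) \;\preceq\; \lambda(t)\,\mathbf{I}_d, \qquad \lambda(t) \;:=\; \frac{t(\sigma^2-1)}{\tau_t^2} + \frac{tR^2}{\tau_t^4}.
\end{equation*}

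Since $\nabla V$ is symmetric (being a Hessian in $x$), the first-variation equation $\dot J_t = \nabla V(t,X_t)\,J_t$ with $J_0=\mathbf{I}_d$ implies $\tfrac{d}{dt}\|J_t v\|^2 \le 2\lambda(t)\|J_t v\|^2$ for every $v\in\sR^d$, and Gr{\"o}nwall yields
\begin{equation*}
    \|\nabla X_1(x)\|_{\mathrm{op}} \;\le\; \exp\Bigl(\int_0^1 \lambda(t)\,dt\Bigr).
\end{equation*}
The substitution $u=\tau_t^2 = 1 + t^2(\sigma^2-1)$ collapses the two summands of $\int_0^1\lambda(t)\,dt$ to the elementary values $\log\sigma$ and $R^2/(2\sigma^2)$ respectively, yielding the claimed constant $\sigma\exp\!\bigl(R^2/(2\sigma^2)\bigr)$.

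The main technical step is the Gaussian algebra that produces the identity $\phi(x)\EuScript{Q}_{1-t}r(x) = q_t(x)$ and then exhibits $q_t$ as a mixture with variance $\tau_t^2$ and means contained in the ball of radius $tR$; once this representation is in hand, the Tweedie identity, the compact-support variance bound, and the one-variable integral proceed routinely. Well-posedness of $J_t$ up to the endpoint $t=1$ is supplied by Theorem~\ref{main-thm1}, so the boundary does not introduce additional subtlety.
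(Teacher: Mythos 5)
Your proof is correct and leads to the same intermediate bound as the paper's (your $\lambda(t)$ coincides exactly with the paper's $\theta_t$ in eq.\ \paref{eq:vp-vf-ubd-convolution}), but the route to it is organized a bit differently. The paper first establishes the general covariance representation of Lemma \ref{lem-cov},
$\nabla V(t,x) = \tfrac{t}{(1-t^2)^2}\,\Cov\bigl(p^{tx,1-t^2}\bigr) - \tfrac{t}{1-t^2}\,\rmI_d$,
where $p^{tx,1-t^2}$ is the Bayesian posterior on the target variable $y$; for $\nu = N(0,\sigma^2\mathbf{I}_d)*\rho$ it then exhibits $p^{tx,1-t^2}$ as an affine Gaussian mixture in $y$ and bounds its covariance, which forces an explicit cancellation of the singular prefactor $(1-t^2)^{-2}$ as $t\to 1$. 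You instead pass through $q_t := \phi\cdot\EuScript{Q}_{1-t}r$, recognize it directly as a Gaussian mixture over the mixing parameter $w\in\mathrm{supp}(\rho)$ with variance $\tau_t^2 = 1+(\sigma^2-1)t^2$, and apply the Tweedie identity to get $\nabla^2\log q_t = -\tau_t^{-2}\mathbf{I}_d + \tau_t^{-4}\Cov_{\rho_x}(tW)$. This keeps the Gaussian contribution in the explicit $-\tau_t^{-2}\mathbf{I}_d$ term and confines the remaining covariance to the ball of radius $tR$, so no singular prefactor ever appears. Both decompositions are Bayesian-posterior covariance bounds, just parametrized by $y$ versus by $w$; your parametrization is cleaner for this specific convolution class, while the paper's Lemma \ref{lem-cov} is target-agnostic and serves the other cases of Theorem \ref{thm1} as well. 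The subsequent Gr\"onwall step and the one-variable integral $\int_0^1\lambda(t)\,\diff t = \log\sigma + R^2/(2\sigma^2)$ are identical to the paper's Corollary-style computation.
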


\begin{remark}
%Regarding the Gaussian convolution $p := N(0,\sigma^2 \mathbf{I}_d) * \rho$ and $\rho$ supported on a ball of radius $R$, we have
Combining $\mathrm{Lip}(X_1(x)) \leq \|\nabla X_1(x) \|_{\mathrm{op}}$ and Theorem \ref{main-thm3}, we get
    \begin{equation}
        \label{eq:ode-gradx-bd-var-gaussian-conv}
        \mathrm{Lip}(X_1(x)) \leq \sigma \exp \left( \frac{R^2}{2 \sigma^2} \right), \quad \forall x \in \sR^d.
    \end{equation}

For Gaussian mixtures, the Lipschitz constants of (\ref{eq:ode-gradx-bd-var-gaussian-conv}) are better than those provided by the Brownian transport map \cite[Theorem 1.4]{mikulincer2021brownian} and match those presented in \cite{mikulincer2022lipschitz}. Meanwhile, the Lipschitz constants of $X_1$ lead to a dimension-free logarithmic Sobolev constant and a dimension-free Poincar\'e constant
    \begin{equation}
        \label{eq:conv-lsc-pc}
        C_{\mathrm{LS}} (p) \le 2 \sigma^2 \exp \left( \frac{R^2}{\sigma^2} \right), \quad
        C_{\mathrm{P}} (p) \le \sigma^2 \exp \left( \frac{R^2}{\sigma^2} \right).
    \end{equation}

On the one hand, (\ref{eq:conv-lsc-pc}) implies a Gaussian log-Sobolev constant $2 \sigma^2$ and a Gaussian Poincar\'e constant $\sigma^2$ as $R$ goes to zero. In fact, Poincar{\'e} constant $\sigma^2$ and log-Sobolev constant $2\sigma^2$ are optimal for Gaussian measure $N(0,\sigma^2 \mathbf{I}_d)$ on $\mathbb R^d$.
On the other hand, the Poincar{\'e} constant obtained by (\ref{eq:conv-lsc-pc}) is obviously smaller than the result in \cite[Theorem 1.2]{bardet2018functional}. In fact, the upper bound of Poincar{\'e} constant for distribution $p=N(0,\sigma^2 \mathbf{I}_d) * \rho$ in \cite[Theorem 1.2]{bardet2018functional} is $\sigma^2 \exp \left(4R^2 / \sigma^2 \right)$. Similarly, the log-Sobolev constant (\ref{eq:conv-lsc-pc}) we obtained is slightly better than that in \cite[Corollary 1]{chen2021dimension}. Indeed, the upper bound of log-Sobolev constant for distribution $\nu = N(0, \sigma^2 \mathbf{I}_d) * \rho$ in \cite[Corollary 1]{chen2021dimension} is $6(4R^2 +\sigma^2) \exp \left(4R^2 / \sigma^2 \right)$. Nonetheless, it is worthwhile to remark that \cite{chen2021dimension} considers a rich class of probability measures with the convolutional structure, which leads to general results on dimension-free log-Sobolev and Poincar{\'e} inequlities.

\end{remark}

%%%%%%%%%%%%%%%%%%%%%%%%%%%%%%%%%%%%%%%%%%%%%%%%%%%%%%%
%%%%%%%%%%%%%%%%%%%%%%%%%%%%%%%%%%%%%%%%%%%%%%%%%%%%%%%

\section{The F{\"o}llmer flow and its well-posedness} \label{sec:follmer-flow}

Let us present our motivations to derive the F{\"o}llmer flow. We are largely inspired by the construction of the F{\"o}llmer process \cite{follmer1988random, lehec2013representation, eldan2018regularization, eldan2020stability}, which provides a probabilistic solution to the Schr{\"o}dinger problem \cite{schrodinger1931uber, leonard2014survey}, though our construction of the F{\"o}llmer flow is partially heuristic using a similar time-reversal argument.

\subsection{The F{\"o}llmer process}

In F{\"o}llmer's lecture notes at the {\'E}cole d'{\'E}t{\'e} de Probabilit{\'e}s de Saint-Flour in 1986 \cite{follmer1988random}, the F{\"o}llmer process is constructed with time reversal of a linear It\^o SDE under a finite relative entropy condition, which rigorously determines a Schr{\"o}dinger bridge from a source Dirac measure $\delta_0$ to a general target measure $\nu$. Let us briefly revisit F{\"o}llmer's arguments to derive such a process.

\begin{definition} [\cite{follmer1988random}]
A diffusion process $\overline{P} := \left(\overline{X}_t \right)_{t\in [0, 1]}$ starting with marginal distribution $\nu$ at time $t = 0$ and reaching $0$ at time $t = 1$ is defined by the following It\^o SDE
\begin{equation}
    \label{eq:follmer-sde-tr}
    \diff \overline{X}_t = - \frac{1} {1 - t} \overline{X}_t \diff t + \diff \overline{W}_t, \ \overline{X}_0 \sim \nu, \ t \in [0, 1)
\end{equation}
with an extended solution at time $t = 1$, i.e., $\overline{X}_1 \sim \delta_{0}$.
The transition probability distribution of (\ref{eq:follmer-sde-tr}) from $\overline{X}_0$ to $\overline{X}_t$ is given by $\overline{X}_t | \overline{X}_0 \sim N ((1-t) \overline{X}_0, t(1-t)\mathbf{I}_d )$ for every $0 \leq t < 1$.
\end{definition}

\begin{lemma} [\cite{follmer1985entropy}]
\label{lm:follmer-tr}
Suppose that the diffusion process $Q$ has finite relative entropy with respect to a standard Wiener process $W_t$ over the unit time interval, i.e., $t \in [0, 1]$. Then for almost all $t \in [0, 1]$, the logarithmic derivative of marginal density $\rho_t$ of $Q$ satisfies the duality equation $\nabla \log \rho_t (x) = b (x, t) + \overline{b} (x, 1-t)$ for almost all $x \in \sR^d$, where $b (x, t)$ and $\overline{b} (x, t)$ are drifts of diffusion process $Q$ and its time-reversed diffusion process $\overline{Q}$, respectively.
\end{lemma}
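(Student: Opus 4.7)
The plan is to follow F\"ollmer's original entropy approach to time reversal of diffusions. The argument rests on three ingredients: a Girsanov-type representation of $Q$ afforded by the finite-entropy hypothesis, a Fokker-Planck equation for the marginal density $\rho_t$, and an analogous equation for the time-reversed law $\overline{Q}$, whose comparison yields the duality formula.

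First I would invoke Girsanov's theorem in its entropic form: the hypothesis $H(Q \mid W) < \infty$ produces a predictable drift $b(x,t)$ such that under $Q$ the process $X_t - X_0 - \int_0^t b(X_s, s)\diff s$ is a standard Brownian motion, together with the energy identity $\tfrac{1}{2}\int_0^1 \mathbb{E}_Q |b(X_t,t)|^2 \diff t = H(Q \mid W) < \infty$. From this and It\^o calculus I would derive that the marginal density $\rho_t$ exists for a.e.\ $t$ and satisfies the forward Fokker-Planck equation $\partial_t \rho_t + \nabla \cdot (b\, \rho_t) = \tfrac{1}{2} \Delta \rho_t$ in the distributional sense.

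Next I would apply the symmetric reasoning to the time-reversed process $\overline{X}_t := X_{1-t}$. Its law $\overline{Q}$ inherits finite relative entropy with respect to a time-reversed Wiener reference, so Girsanov again yields a drift $\overline{b}(x,t)$, and the marginal $\overline{\rho}_t = \rho_{1-t}$ satisfies $\partial_t \overline{\rho}_t + \nabla \cdot (\overline{b}\, \overline{\rho}_t) = \tfrac{1}{2} \Delta \overline{\rho}_t$. Substituting $s = 1-t$ turns this into $-\partial_s \rho_s + \nabla \cdot (\overline{b}(\cdot, 1-s)\, \rho_s) = \tfrac{1}{2} \Delta \rho_s$. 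Adding this backward equation to the forward one and using the algebraic identity $\Delta \rho_s = \nabla \cdot (\rho_s \nabla \log \rho_s)$ causes the time derivatives to cancel and produces $\nabla \cdot \bigl(\, [\,b(\cdot, s) + \overline{b}(\cdot, 1-s) - \nabla \log \rho_s\,]\, \rho_s \bigr) = 0$. Under vanishing flux at infinity, the bracketed vector field must itself vanish $\rho_s$-a.e., which is precisely the duality equation.

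The main obstacle will be the rigorous identification of $\overline{Q}$ as a diffusion with a bona fide drift $\overline{b}$, together with justifying that $\nabla \log \rho_t$ is an honest gradient field for a.e.\ $t$ and that no extraneous divergence-free residual survives in the distributional identity above. Here the finite-entropy assumption does double duty: entropy dissipation along the Wiener reference (de Bruijn-type identities) yields a finite time-integrated Fisher information $\int_0^1 \int |\nabla \log \rho_t|^2 \rho_t\, \diff x\, \diff t < \infty$, which gives pointwise sense to $\nabla \log \rho_t$ for a.e.\ $t$, and simultaneously produces enough integrability at infinity to close the divergence identity to a pointwise equality. Once these regularity points are in place, the duality formula follows immediately from the PDE manipulation above.
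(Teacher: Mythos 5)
The paper does not prove this lemma; it cites it directly from F{\"o}llmer (1985) and uses it as a black box. So there is no in-paper proof to compare against, and the proposal must stand on its own merits. It does not.

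Your reduction is correct up to a point: Girsanov supplies the forward drift $b$, the symmetric argument supplies a reversed drift $\overline{b}$, and the two Fokker--Planck equations combine (after the substitution $s = 1-t$ and the identity $\Delta\rho_t = \nabla\cdot(\rho_t\nabla\log\rho_t)$) to give
$\nabla \cdot \bigl(\, [\,b(\cdot, t) + \overline{b}(\cdot, 1-t) - \nabla \log \rho_t\,]\, \rho_t \bigr) = 0$.
The gap is the final inference. A vector field $v$ with $\nabla\cdot(v\rho_t)=0$ need not vanish $\rho_t$-a.e.; in dimension $d\geq 2$ the space of integrable, decaying, $\rho_t$-weighted divergence-free currents is infinite-dimensional, and neither the Fisher-information bound nor additional integrability rules out a solenoidal residual in $b + \overline{b}(\cdot,1-t) - \nabla\log\rho_t$. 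The problem is structural, not technical: the Fokker--Planck equation only constrains the flow of one-time marginals, and that flow is compatible with any drift of the form $b + \rho_t^{-1}c_t$ with $\nabla\cdot c_t = 0$. Since both Fokker--Planck equations are automatic consequences of It{\^o}'s formula for the respective diffusions, adding them can never produce more information than the marginals alone carry, and the marginals alone do not determine $b$, $\overline{b}$, or their sum.

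The ingredient that is missing is a path-level duality argument, which is precisely what makes F{\"o}llmer's (and Haussmann--Pardoux's) proof work. The standard route is to compute $\mathbb{E}_Q[\nabla f(X_t)]$ for smooth compactly supported $f$ in two ways: integrating by parts against $\rho_t$ gives $-\mathbb{E}_Q[f(X_t)\nabla\log\rho_t(X_t)]$; a Wiener-space integration-by-parts identity (of Clark--Ocone / Malliavin type), exploiting that $b$ is adapted to the forward filtration while $\overline{b}$ is adapted to the backward filtration and that both Girsanov representations are underwritten by the finite-entropy hypothesis, gives $-\mathbb{E}_Q[f(X_t)\,(b(X_t,t) + \overline{b}(X_t,1-t))]$. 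Equating the two yields the duality equation for a.e.\ $t$. Equivalently, one can work with the transition kernel: the reversed kernel is the Bayes transform $\overline{p}(1-t,y;1-s,x) = p(s,x;t,y)\rho_s(x)/\rho_t(y)$, and differentiating this identity in $t$ and identifying the generator of the reversed process produces $\overline{b}(y,1-t) = -b(y,t) + \nabla\log\rho_t(y)$ directly. Either version uses the joint law or the transition structure, not merely the marginals, and that is what your PDE manipulation omits.
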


\begin{definition}[\cite{follmer1988random, lehec2013representation}]
\label{def:follmer-sde}
F{\"o}llmer process $P = (X_t)_{t \in [0, 1]}$ is defined by the It\^o SDE
\begin{equation}
    \label{eq:follmer-sde}
    \diff X_t = \nabla \log \EuScript{P}_{1-t} r (X_t) \diff t + \diff W_t, \ X_0 = 0, \ t \in [0, 1]
\end{equation}
where $W_t$ is a standard Wiener process and $\EuScript{P}_t$ is the heat semigroup defined by
$\EuScript{P}_t h(x) := \E \left[ h(x + W_t) \right]$.
Moreover, the drift $\nabla \log \EuScript{P}_{1-t} r (X_t)$ is called the F{\"o}llmer drift.
\end{definition}

\begin{remark}
According to Lemma \ref{lm:follmer-tr}, the F{\"o}llmer process $P$ can be obtained by taking the time reversal of the diffusion process $\overline{P}$ over $t \in [0, 1]$. It implies that the F{\"o}llmer drift has an alternative representation, i.e.,
for any $t \in (0, 1]$, $\nabla \log \EuScript{P}_{1-t} r (X_t) = X_t/t + \nabla \log p_t (X_t)$,
where $p_t$ is the marginal density of the F{\"o}llmer process $P$.
\end{remark}

%%%%%%%%%%%%%%%%%%%%%%%%%%%%%%%%%%%%%%%%%%%%%%%%%%%%
\subsection{The F{\"o}llmer flow via time reversal}

Since $\delta_0$ is a degenerate distribution in the sense that its mass is concentrated at $0$, we consider constructing a diffusion process that starts with a marginal distribution $\nu$ and would be able to keep the nonzero variance of its marginal distribution at time $t = 1$. Let us present the constructed diffusion process first.
For any $\varepsilon \in (0,1)$, we consider a diffusion process $\left(\overline{X}_t \right)_{t\in [0, 1-\varepsilon]}$ defined by the following It\^o SDE
\begin{equation}
    \label{eq:vp-sde-tr}
    \diff \overline{X}_t = - \frac{1} {1 - t} \overline{X}_t \diff t + \sqrt{\frac{2} {1-t}} \diff \overline{W}_t, \quad \overline{X}_0 \sim \nu
\end{equation}
for all $t \in [0, 1-\varepsilon]$. By Theorem 2.1 in \cite[Chapter IX]{revuz2013continuous}, the diffusion process $\overline{X}_t$ defined in (\ref{eq:vp-sde-tr}) has a unique strong solution on $[0,1-\varepsilon]$. Moreover, the transition probability distribution of (\ref{eq:vp-sde-tr}) from $\overline{X}_0$ to $\overline{X}_t$ is given by $\overline{X}_t | \overline{X}_0 =x_0 \sim N( (1-t) x_0, \ t (2-t) \rmI_d)$
% \begin{equation*}
%     \overline{X}_t | \overline{X}_0 =x_0 \sim N( (1-t) x_0, \ t (2-t) \rmI_d)
% \end{equation*}
for every $t\in [0,1-\varepsilon]$.
It is a straightforward observation that the variance of $\overline{X}_{1 - \varepsilon} | \overline{X}_0$ for SDE (\ref{eq:vp-sde-tr}) will approach the identity matrix $\mathbf{I}_d$ when $\varepsilon$ is small enough.
That is why we could expect the marginal distribution of $\overline{X}_{1 - \varepsilon}$ would have a nonzero variance.
In contrast, for the time-reversed F{\"o}llmer process (\ref{eq:follmer-sde-tr}), the variance of $\overline{X}_{1 - \varepsilon} | \overline{X}_0$ will approach constant $0$ as $\varepsilon \to 0$, which indicates the variance of its marginal distribution vanishes at time $t = 1$.
However, SDE (\ref{eq:vp-sde-tr}) is not well-defined at time $t = 1$ due to unbounded drift and diffusion coefficients.
Then we leverage the fact that the marginal distribution $\overline{\mu}_t$ of the diffusion process $\overline{X}_t$ defined in (\ref{eq:vp-sde-tr}) has been determined in the sense that $\overline{X}_t \overset{d}{=} (1-t) X + \sqrt{t(2-t)} Y$ with $X \sim \nu, Y \sim \gamma_d$, and concentrate on an ODEs system sharing the same marginal distribution flow with SDE (\ref{eq:vp-sde-tr}) in order to circumvent the singularity of SDE (\ref{eq:vp-sde-tr}) at time $t = 1$.

Note that the marginal distribution flow $( \overline{\mu}_t )_{t \in [0, 1-\varepsilon]}$ of the diffusion process (\ref{eq:vp-sde-tr}) satisfies the Fokker-Planck-Kolmogorov equation in an Eulerian framework \cite{bogachev2022fokker}
\begin{equation}
    \label{eq:fke-vp-sde-tr}
    \partial_t \overline{\mu}_t = \nabla \cdot (\overline{\mu}_t V(1-t, x)) \quad \textrm{on} \ [0, 1-\varepsilon] \times \sR^d, \ \overline{\mu}_0 = \nu
\end{equation}
in the sense that $\overline{\mu}_t$  is continuous in $t$ under the weak topology, i.e.,
\begin{align*}
    \overline{\mu}_t(f)  :=\int_{\mathbb R^d} f(x) \mu_t(\diff x) =\nu(f) - \int_0^t \overline{\mu}_s \left( \left<V(1-s, \cdot), \nabla f \right> \right) \diff s
\end{align*}
for all $f\in C^{\infty}_0(\mathbb R^d)$ and the velocity field is given by
\begin{equation} \label{eq:vp-ode-eps-tr-vf}
    V(1-t, x) := \frac{1}{1-t} \left[x + S(1-t, x) \right], \quad t \in[0, 1-\varepsilon]
\end{equation}
and
\begin{equation*}
S(t, x) := \nabla \log \int_{\mathbb R^d} (2\pi (1-t^2))^{-\frac d 2} \exp\left(-\frac{|x- t y|^2}{2(1-t^2)} \right) p(y) \diff y
\end{equation*}
for all $t \in [\varepsilon, 1]$. Due to the classical Cauchy-Lipschitz theory \cite[Section 2]{ambrosio2014continuity} with a Lipschitz velocity field or the well-established Ambrosio-DiPerna-Lions theory with lower Sobolev regularity assumptions on the velocity field \cite{diperna1989ordinary,ambrosio2004transport}, we shall define a flow $( X^{*}_t )_{t \in [0, 1-\varepsilon]}$ in a Lagrangian formulation via the following ODEs system
\begin{align} \label{eq:vp-ode-eps-tr}
    \diff X^{*}_t = - V \left( 1-t, X^{*}_t\right) \diff t, \quad  X^{*}_0 \sim \nu, \quad  t \in [0, 1-\varepsilon].
\end{align}

\begin{proposition}
\label{prop:vp-ode-eps}
Assume the velocity field $V(t, x)$ satisfies $V \in L^1([\varepsilon, 1]; W^{1, \infty}_{\mathrm{loc}}(\sR^d; \sR^d))$ and $|V|/(1+|x|) \in  L^1([\varepsilon, 1]; L^{\infty}(\sR^d))$. Then the push-forward measure associated with the flow map $X^{*}_t$ satisfies $X^{*}_t \overset{d}{=} (1-t) X + \sqrt{t(2-t)} Y$ with $X \sim \nu, Y \sim \gamma_d$.
Moreover, the push-forward measure $\nu \circ ({X^{*}_{1 - \varepsilon}})^{-1}$ converges to the Gaussian measure $\gamma_d$ in the sense of Wasserstein-2 distance as $\varepsilon$ tends to zero, i.e., $W_2 (\nu \circ ({X^{*}_{1 - \varepsilon}})^{-1}, \gamma_d) \to 0$.
\end{proposition}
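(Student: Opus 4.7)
The plan is to treat the two claims separately and tie them together via the classical correspondence between an SDE and an associated continuity-equation/flow in the Eulerian-Lagrangian sense.

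First I would match the ODE-flow marginals to those of the SDE \eqref{eq:vp-sde-tr}. The marginal law $\overline{\mu}_t$ of $\overline{X}_t$ satisfies the Fokker-Planck equation \eqref{eq:fke-vp-sde-tr} with initial condition $\overline{\mu}_0 = \nu$. Since the transition density $\overline{X}_t \mid \overline{X}_0 = x_0 \sim N((1-t) x_0, t(2-t)\mathbf{I}_d)$ is explicit, marginalizing against $\nu$ yields
\begin{equation*}
\overline{\mu}_t \;=\; \mathrm{Law}\bigl((1-t) X + \sqrt{t(2-t)}\,Y\bigr), \qquad X\sim\nu,\ Y\sim\gamma_d,\ X\perp Y.
\end{equation*}
On the other hand, if $\mu^{*}_t := \mathrm{Law}(X^{*}_t)$ is the pushforward along the flow of the ODE $\dot{X}^{*}_t = -V(1-t, X^{*}_t)$, then by standard arguments (testing against $f \in C_0^\infty(\sR^d)$ and integrating by parts) $\mu^{*}_t$ solves the same continuity equation
\begin{equation*}
\partial_t \mu^{*}_t \;=\; \nabla \cdot \bigl(\mu^{*}_t\, V(1-t,\cdot)\bigr), \qquad \mu^{*}_0 = \nu,
\end{equation*}
in the distributional sense. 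The regularity assumptions $V \in L^1([\varepsilon,1]; W^{1,\infty}_{\mathrm{loc}}(\sR^d;\sR^d))$ and $|V|/(1+|x|)\in L^1([\varepsilon,1];L^\infty(\sR^d))$ are precisely those required by the Ambrosio-DiPerna-Lions theory \cite{diperna1989ordinary,ambrosio2004transport} to ensure uniqueness of solutions to the continuity equation in the class of measure solutions with finite second moment (the linear-in-$|x|$ growth bound controls mass escape to infinity). Hence $\mu^{*}_t = \overline{\mu}_t$ for all $t\in[0,1-\varepsilon]$, proving the first assertion.

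For the second claim, plug in $t = 1-\varepsilon$ to obtain
\begin{equation*}
X^{*}_{1-\varepsilon} \;\stackrel{d}{=}\; \varepsilon\, X + \sqrt{1-\varepsilon^2}\, Y, \qquad X\sim\nu,\ Y\sim\gamma_d,\ X\perp Y.
\end{equation*}
Using the explicit coupling $\bigl(\varepsilon X + \sqrt{1-\varepsilon^2}\, Y,\, Y\bigr)$ of $\nu\circ(X^{*}_{1-\varepsilon})^{-1}$ with $\gamma_d$, I can bound
\begin{equation*}
W_2^2\bigl(\nu\circ (X^{*}_{1-\varepsilon})^{-1},\, \gamma_d\bigr) \;\le\; \E\bigl|\varepsilon X + \bigl(\sqrt{1-\varepsilon^2}-1\bigr)Y\bigr|^2 \;\le\; 2\varepsilon^2\,\E|X|^2 + 2\bigl(1-\sqrt{1-\varepsilon^2}\bigr)^2 d.
\end{equation*}
Assumption \ref{condn:well-defined} ensures $\E|X|^2<\infty$, so both terms vanish as $\varepsilon\to 0$, giving the desired convergence.

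The main obstacle is the first step: justifying that the candidate velocity field \eqref{eq:vp-ode-eps-tr-vf} does satisfy the regularity required for the Ambrosio-DiPerna-Lions uniqueness (equivalently, that the score $S(t,\cdot)$ is locally Lipschitz with the appropriate time-integrability and growth). This is in fact stated as a hypothesis of the proposition, so the remaining work is the routine verification that the ODE-pushforward solves the continuity equation and that the Gaussian convolution representation of $\overline{\mu}_t$ carries over. The $W_2$ estimate is then a short direct calculation.
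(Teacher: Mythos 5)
Your proposal is correct and follows essentially the same route as the paper: identify the marginals of the ODE flow with those of the SDE $\eqref{eq:vp-sde-tr}$ via the shared continuity/Fokker--Planck equation, then use the explicit Gaussian transition kernel to write $X^*_t \overset{d}{=} (1-t)X + \sqrt{t(2-t)}\,Y$, and finally bound $W_2$ by the explicit product coupling together with the elementary inequality $|a+b|^2 \le 2|a|^2 + 2|b|^2$. The only difference is that you spell out the uniqueness step by explicitly invoking Ambrosio--DiPerna--Lions, whereas the paper cites Cauchy--Lipschitz theory more tersely; both are adequate under the stated $W^{1,\infty}_{\mathrm{loc}}$ and linear-growth hypotheses.
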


\begin{remark}
Suppose that the target measure $\nu$ has a finite third moment. By Lemma \ref{lem-4.1}, we can supplement the definition of velocity field $V(1-t, x)$ at time $t=1$, i.e.,
\begin{equation*}
V(0, x) := \lim_{t \downarrow 0} V(t, x) =\lim_{t \downarrow 0} \frac{x +S(t, x)}{t} =\mathbb E_{\nu}[X].
\end{equation*}
Then we extend the flow $(X^{*}_t)_{t \in [0,1)}$ to time $t =1$ such that $X^{*}_1 \sim \gamma_d$, which solves the IVP
\begin{align}
    \label{eq:vp-ode-unit-tr}
    \diff X^{*}_t = - V \left( 1-t, X^{*}_t\right) \diff t, \quad  X^{*}_0 \sim \nu, \quad  t \in [0, 1],
\end{align}
where the velocity field
\begin{equation*}
V(1-t, x) = \frac{1}{1-t} \left[ x + S(1-t, x) \right], \quad \forall t \in [0, 1)
\end{equation*}
and $V(0, x) = \E_{\nu} [X] $.
\end{remark}

In order to exploit a time-reversal argument inspired by F{\"o}llmer, it remains crucial to establish the well-posedness of a flow $( X^{*}_t )_{t \in [0, 1]}$ that solves the IVP (\ref{eq:vp-ode-unit-tr}). We proceed to study regularity properties of the velocity field $V$ on $[0, 1] \times \sR^d$ by imposing structural assumptions on the target measure $\nu$. By Theorem \ref{thm1}, we know that there exists $0 \leq \theta^{\star}_t <\infty $ such that
\begin{equation}
 \|- \nabla V(t, x) \|_{\mathrm{op}}= \| \nabla V(t, x) \|_{\mathrm{op}} \leq \theta^{\star}_{t}
\end{equation}
for any $t\in [0,1]$. Furthermore, the velocity field $-V(1-t, x)$ is smooth and with the bounded derivative for any $t\in [0,1]$ and $x\in \mathbb R^d$. Therefore, the IVP (\ref{eq:vp-ode-unit-tr}) has a unique solution and the flow map $x \mapsto X^{*}_t(x)$ is a diffeomorphism from $\mathbb R^d$ onto $\mathbb R^d$ at any time $t \in [0, 1]$. A standard time-reversal argument of ODE would yield a formal definition of the F{\"o}llmer flow.

\begin{definition}
    \label{def:follmer-flow-score}
    Suppose that probability measure $\nu$ satisfies Assumption \ref{condn:well-defined}. If $(X_t)_{t \in [0, 1]}$ solves the IVP
    \begin{equation}
    \label{eq:vp-ode-unit}
    \frac{\diff X_t} {\diff t} = V(t, X_t), \quad X_0 \sim \gamma_d, \quad t \in [0,1]
    \end{equation}
    where the velocity field
    \begin{equation*}
    V(t, x) = \frac{1}{t} \left[ x + S(t, x) \right], \quad \forall t \in (0, 1], \quad V(0, x) = \E_{\nu} [X],
    \end{equation*}
    we call $(X_t)_{t \in [0, 1]}$ a F{\"o}llmer flow and $V(t, x)$ a F{\"o}llmer velocity field associated to $\nu$. 
\end{definition}

\begin{remark}
Notice that
\begin{align*}
\EuScript{Q}_{1-t} r (x) = (2\pi)^{d/2} \exp\left(\frac{ |x |^2}{2} \right) \frac{1}{(2\pi(1-t^2))^{d/2}} \int_{\mathbb R^d} p(y)\exp\left(-\frac{|x-ty |^2 }{2(1-t^2)} \right) \diff y
\end{align*}
where $\EuScript{Q}_{1-t} r(x)$ is defined in (\ref{eq:vp-vf-op-form}). We further obtain
$\nabla \log \EuScript{Q}_{1-t} r(x) = x + S(t, x), \ \forall t \in [0,1]$.
Therefore, we have that (\ref{main-ode}) and (\ref{eq:vp-ode-unit}) are equivalent, which satisfy $X_0 \sim \gamma_d$ and $X_1 \sim \nu$.
\end{remark}

Finally, let us conclude with the well-posedness properties of the F{\"o}llmer flow, which is presented in Theorem \ref{main-thm1} and summarized below.  
\begin{theorem}[Well-posedness] \label{thm:well-posed-follmer-flow}
Suppose that Assumptions \ref{condn:well-defined}, \ref{condn:semi-log-convex} and \ref{condn:semi-log-concave} hold.
Then the F{\"o}llmer flow $(X_t)_{t \in [0, 1]}$ associated to $\nu$ is a unique solution to the IVP (\ref{eq:vp-ode-unit}).
Moreover, the push-forward measure $\gamma_d \circ (X_1^{-1}) = \nu$.
\end{theorem}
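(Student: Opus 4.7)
The plan is to establish the statement by combining a classical Cauchy--Lipschitz argument for the IVP with a change-of-variables argument linking the flow to the time-reversed diffusion \paref{eq:vp-sde-tr}. Since the velocity field $V(t,x)=[x+S(t,x)]/t$ is singular at $t=0$ and since $S(t,x)$ is defined through a convolution with a Gaussian kernel that degenerates at $t=1$, the work divides naturally into three stages: (a) prove that $V$ is sufficiently regular on all of $[0,1]\times\sR^d$ under Assumptions \ref{condn:well-defined}--\ref{condn:semi-log-concave}; (b) invoke Picard--Lindel\"of to get existence and uniqueness of a global-in-time flow; (c) identify the law of $X_t$ via the Fokker--Planck--Kolmogorov equation \paref{eq:fke-vp-sde-tr} so that $X_1\sim\nu$.

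\textbf{Stage (a).} On $(0,1)\times\sR^d$ smoothness of $S(t,x)=\nabla\log \int \varphi^{tx,1-t^2}(y)p(y)\diff y$ is immediate by differentiation under the integral sign, since the Gaussian kernel $\varphi^{tx,1-t^2}$ is smooth and strictly positive for $t\in(0,1)$. For the boundary $t=0$, the paper's Lemma \ref{lem-4.1} yields $\lim_{t\downarrow 0}[x+S(t,x)]/t=\mathbb{E}_\nu[X]$, so the extension $V(0,x)=\mathbb{E}_\nu[X]$ makes $V$ continuous at $t=0$. The subtle boundary is $t=1$: here the Gaussian kernel collapses to a Dirac mass and $S(1,x)=\nabla\log p(x)-x$, which is smooth by Assumption \ref{condn:semi-log-convex}. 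The key quantitative input is Theorem~\ref{thm1}, which provides a uniform bound $\|\nabla V(t,x)\|_{\mathrm{op}}\le \theta_t^\star<\infty$ for every $t\in[0,1]$, so $V(t,\cdot)$ is globally Lipschitz on $\sR^d$ with a bound that is locally integrable in $t$ on $[0,1]$.

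\textbf{Stage (b).} Given (a), the Cauchy--Lipschitz theorem applied on each compact subinterval of $[0,1]$ yields, for every initial condition $x\in\sR^d$, a unique $C^1$ trajectory $t\mapsto X_t(x)$ on $[0,1]$. Randomising the initial condition by $X_0\sim\gamma_d$ gives the unique solution of the IVP \paref{eq:vp-ode-unit} in the pathwise sense; measurability in $x$ follows from the standard continuous-dependence statement. The flow map $x\mapsto X_t(x)$ is a $C^1$-diffeomorphism of $\sR^d$ for each $t\in[0,1]$ because the Jacobian equation $\partial_t \nabla X_t=\nabla V(t,X_t)\nabla X_t$ is a linear ODE whose coefficients are bounded by $\theta_t^\star$.

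\textbf{Stage (c).} It remains to identify the marginal laws $\mu_t=\gamma_d\circ X_t^{-1}$. Using the identity $\nabla\log \EuScript{Q}_{1-t}r(x)=x+S(t,x)$ noted in the remark after Definition~\ref{def:follmer-flow-score}, the flow \paref{main-ode} and \paref{eq:vp-ode-unit} coincide. The velocity field $V(t,\cdot)$ is, by construction, the one derived from the time-reversal of the SDE \paref{eq:vp-sde-tr}: Proposition~\ref{prop:vp-ode-eps} shows that the reversed flow $X_t^\ast$ satisfies $X_t^\ast\stackrel{d}{=}(1-t)X+\sqrt{t(2-t)}Y$ with $X\sim\nu$, $Y\sim\gamma_d$, so its law is the unique weak solution of the continuity equation \paref{eq:fke-vp-sde-tr}. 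Reversing time via $s=1-t$, the forward flow $X_t$ of \paref{eq:vp-ode-unit} satisfies the same continuity equation run in the opposite direction with initial condition $\gamma_d$ at $t=0$, and by the uniqueness of weak solutions (already a byproduct of the Lipschitz bound in Stage (a) together with the DiPerna--Lions theory) the marginals must match the reversal of those of $X^\ast$. In particular $X_1\sim\nu$, which is the claim.

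\textbf{Main obstacle.} The delicate step is Stage (a)'s uniform-in-$t$ control of $\nabla V$ on the closed interval $[0,1]$: away from the endpoints the convolution structure of $S(t,\cdot)$ gives smoothness for free, but near $t=0$ the apparent $1/t$ singularity has to be cancelled by the vanishing of $x+S(t,x)$ (the content of the Lemma \ref{lem-4.1} expansion), and near $t=1$ the collapse of the Gaussian kernel forces us to rely on Assumptions \ref{condn:semi-log-convex} and \ref{condn:semi-log-concave} (through Theorem~\ref{thm1}, whose proof uses the Cram\'er--Rao bound alluded to in the introduction) to obtain a finite bound $\theta_1^\star$. Once both endpoints are controlled, the rest of the argument is standard ODE theory followed by a uniqueness-of-marginals comparison.
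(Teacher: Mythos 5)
Your proposal follows essentially the same route as the paper: Lemma~\ref{lem-4.1} to extend $V$ to $t=0$, Theorem~\ref{thm1} (via Cram\'er--Rao and the Brascamp--Lieb/covariance bounds under Assumptions~\ref{condn:semi-log-convex}--\ref{condn:semi-log-concave}) to get $\|\nabla V(t,\cdot)\|_{\mathrm{op}}\le\theta_t^\star$ uniformly on $[0,1]$, then Cauchy--Lipschitz for a unique flow and Proposition~\ref{prop:vp-ode-eps} with a time-reversal argument to identify $\gamma_d\circ(X_1)^{-1}=\nu$. One small slip in Stage~(a): from $\nabla\log\EuScript{Q}_{1-t}r(x)=x+S(t,x)$ and $\EuScript{Q}_0 r=r=p/\phi$ one gets $S(1,x)=\nabla\log p(x)$, not $\nabla\log p(x)-x$, but this has no bearing on the argument since the uniform control near $t=1$ is supplied by Theorem~\ref{thm1} rather than this pointwise formula.
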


%%%%%%%%%%%%%%%%%%%%%%%%%%%%%%%%%%%%%%%%%%%%%%%%%%%%%
%%%%%%%%%%%%%%%%%%%%%%%%%%%%%%%%%%%%%%%%%%%%%%%%%%%%%
\section{Applications}
Owing to the Lipschitz transport properties proved in Theorems \ref{main-thm2} and \ref{main-thm3}, we are motivated to establish a variety of functional inequalities and concentration inequalities for several classes of probability measures on Euclidean space. 

%%%%%%%%%%%%%%%%%%%%%%%%%%%%%%%%%%%%%%%%%%%%%%%%%%%%%%%%%%%%
\subsection{Dimension-free inequalities}

In this subsection, we provide dimension-free results for the $\Psi$-Sobolev inequalities. For completeness, we incorporate classical results for strongly-log-concave measures ($\kappa > 0$), which have been studied with the optimal transport maps \cite{caffarelli2000monotonicity}. Compared with \cite{mikulincer2021brownian}, we obtain that the upper bound constants of $\Psi$-Sobolev inequalities, Isoperimetric inequalities and $q$-Poincar{\'e} inequalities are the same for $\kappa D^2 \geq 1$. When $\kappa D^2 <1$, our upper bound constants to $\Psi$-Sobolev inequalities and Isoperimetric inequalities are in the same order with the results of Lemmas 5.3-5.5 in \cite{mikulincer2021brownian}. For the Gaussian mixtures case,  we obtain that the constants of these inequalities are slightly better than the result of Lemmas 5.3-5.5 in \cite{mikulincer2021brownian}.

\begin{definition}
Let $\mathcal I$ be a closed interval (not necessarily bounded) and let $\Psi: \mathcal I \rightarrow \mathbb R$ be a twice differentiable function. We say that $\Psi$ is a divergence if each of the functions $\Psi, \Psi''$ and $-1/\Psi''$ is a convex function. Given a probability measure $\nu(\diff x)=p(x) \diff x$ on $\mathbb R^d$ and a function $\zeta: \mathbb R^d \rightarrow \mathcal I$ such that $\int_{\mathbb R^d} \zeta(x) p(x) \diff x \in \mathcal I $, we define
\begin{equation*}
\mathrm{Ent}^{\Psi}_{p}(\zeta) :=\int_{\mathbb R^d} \Psi(\zeta(x)) p(x) \diff x -\Psi \left(\int_{\mathbb R^d} \zeta(x) p(x) \diff x \right).
\end{equation*}
\end{definition}

Some examples of the divergences are $\Psi: \mathbb R \rightarrow \mathbb R$ with $\Psi(x)=x^2$ (Poincar{\'e} inequality) and $\Psi: \mathbb R_{+} \rightarrow \mathbb R$ with $\Psi(x) =x\log x$ (log-Sobolev inequality).

\begin{theorem}[$\Psi$-Sobolev inequalities] \label{application-thm-1}
 Let Assumptions \ref{condn:well-defined}, \ref{condn:semi-log-convex} and \ref{condn:semi-log-concave} hold.
\begin{itemize}
    \item[(1)] %Let $p$ be a $\kappa$-semi-log-concave measure with $D :=  (1/\sqrt{2}) \mathrm{diam} ( \mathrm{supp} (p)) $ and 
    Let $\zeta: \mathbb R^d \rightarrow \mathcal I$ be any continuously differentiable function such that $\int_{\mathbb R^d} \zeta^2(x) p(x) \diff x \in \mathcal I$.
\begin{itemize}
    \item[(i)] If $\kappa D^2 \ge 1$, then
\begin{equation*}
    \mathrm{Ent}^{\Psi}_{p}(\zeta) \le \frac{1}{2\kappa} \int_{\mathbb R^d} \Psi''(\zeta(x)) |\nabla \zeta(x)|^2 p(x) \diff x.
\end{equation*}
    \item[(ii)] If $\kappa D^2 <1$, then
\begin{align*}
     \mathrm{Ent}^{\Psi}_{p}(\zeta) 
     \le \frac{\exp(1-\kappa D^2)}{2} D^2 \int_{\mathbb R^d} \Psi''(\zeta(x)) |\nabla \zeta(x)|^2 p(x) \diff x.
\end{align*}
\end{itemize}

\item[(2)] Fix a probability measure $\rho$ on $\mathbb R^d $ supported on a ball of radius $R$ and let $p:=N(a,\Sigma) * \rho$ and
%where $N(a,\Sigma) $ is a normal random vector on $\mathbb R^d $ with mean $a$ and covariance $\Sigma$. We 
denote $\lambda_{\min} :=\lambda_{\min}(\Sigma)$ and $\lambda_{\max}=\lambda_{\max}(\Sigma)$. Then for any continuously differentiable function $\zeta: \mathbb R^d \rightarrow \mathcal I$ such that $\int_{\mathbb R^d} \zeta^2(x) p(x) \diff x \in \mathcal I $, we have
\begin{align*}
     \mathrm{Ent}^{\Psi}_{p}(\zeta) 
     \le \frac{1}{2} \lambda_{\max} \exp\left(\frac{R^2}{\lambda_{\min}} \right) \int_{\mathbb R^d} \Psi''(\zeta(x)) |\nabla \zeta(x)|^2 p(x) \diff x.
\end{align*}

\end{itemize}

\end{theorem}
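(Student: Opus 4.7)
The plan is to obtain all three inequalities by transferring the well-known $\Psi$-Sobolev inequality for the standard Gaussian measure $\gamma_d$ through the Lipschitz transport map $X_1$ constructed via the F\"ollmer flow. Recall that for the Gaussian, the Chafa\"i/Bakry-\'Emery $\Psi$-entropy inequality states that for every sufficiently regular $f:\sR^d\to\mathcal I$,
\begin{equation*}
\mathrm{Ent}^{\Psi}_{\gamma_d}(f) \;\le\; \frac{1}{2}\int_{\sR^d}\Psi''(f(x))\,|\nabla f(x)|^{2}\,\diff \gamma_d(x).
\end{equation*}
Since the F\"ollmer flow map $T:=X_1$ pushes $\gamma_d$ forward to $\nu=p\,\diff x$ by Theorem~\ref{main-thm1}, applying the above to $f=\zeta\circ T$ and using the change-of-variables identity $\mathrm{Ent}^{\Psi}_{\gamma_d}(\zeta\circ T)=\mathrm{Ent}^{\Psi}_{p}(\zeta)$ will be the backbone of the argument.

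The key estimate is the chain-rule bound
\begin{equation*}
|\nabla(\zeta\circ T)(x)|\;\le\;\|\nabla T(x)\|_{\mathrm{op}}\,|\nabla\zeta(T(x))|\;\le\;L\,|\nabla\zeta(T(x))|,
\end{equation*}
where $L$ is the Lipschitz constant of $T$ provided by Theorem~\ref{main-thm2} or Theorem~\ref{main-thm3}. Pushing forward once more yields
\begin{equation*}
\mathrm{Ent}^{\Psi}_{p}(\zeta)\;\le\;\frac{L^{2}}{2}\int_{\sR^d}\Psi''(\zeta(x))\,|\nabla\zeta(x)|^{2}\,p(x)\,\diff x.
\end{equation*}
For part~(1)(i), Theorem~\ref{main-thm2}(i) gives $L^{2}\le 1/\kappa$ producing the constant $1/(2\kappa)$. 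For part~(1)(ii), Theorem~\ref{main-thm2}(ii) gives $L^{2}\le\exp(1-\kappa D^{2})D^{2}$, which matches the claimed prefactor.

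Part~(2) requires an affine reduction, which I expect to be the only step needing real care. Write a random variable with law $p$ as $Y=Y_{1}+Y_{2}$, $Y_{1}\sim N(a,\Sigma)$, $Y_{2}\sim\rho$, and set $\widetilde Y:=\Sigma^{-1/2}(Y-a)=Z+\Sigma^{-1/2}Y_{2}$ with $Z\sim N(0,\mathbf I_{d})$. Then $\widetilde Y\sim N(0,\mathbf I_{d})\ast\widetilde\rho$, where the pushed-forward measure $\widetilde\rho$ is supported in a ball of radius $R/\sqrt{\lambda_{\min}}$ (since $\|\Sigma^{-1/2}\|_{\mathrm{op}}=\lambda_{\min}^{-1/2}$). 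Applying Theorem~\ref{main-thm3} with $\sigma=1$ and radius $R/\sqrt{\lambda_{\min}}$ yields a Lipschitz transport $\widetilde T$ from $\gamma_d$ to the law of $\widetilde Y$ with constant $\exp\!\bigl(R^{2}/(2\lambda_{\min})\bigr)$. Composing with the affine map $x\mapsto \Sigma^{1/2}x+a$ gives a Lipschitz transport from $\gamma_d$ to $p$ with constant
\begin{equation*}
L\;\le\;\|\Sigma^{1/2}\|_{\mathrm{op}}\,\exp\!\Bigl(\tfrac{R^{2}}{2\lambda_{\min}}\Bigr)\;=\;\sqrt{\lambda_{\max}}\,\exp\!\Bigl(\tfrac{R^{2}}{2\lambda_{\min}}\Bigr),
\end{equation*}
so $L^{2}\le\lambda_{\max}\exp(R^{2}/\lambda_{\min})$, yielding the announced inequality. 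The only nontrivial point is to justify that affine pre- and post-composition preserves the Lipschitz transport property and yields the correct operator-norm bound, but this is immediate from the chain rule applied to $\nabla(\Sigma^{1/2}\widetilde T(x))=\Sigma^{1/2}\nabla\widetilde T(x)$.
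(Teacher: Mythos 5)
Your proposal is correct and follows essentially the same route as the paper: transfer the Gaussian $\Psi$-Sobolev inequality of Chafa\"i through the F\"ollmer flow map at time $1$ via the chain-rule Lipschitz bound, with an affine reduction to the isotropic case for part (2). The only cosmetic difference is in part (2): the paper first applies the isotropic inequality to $\tilde p = \gamma_d * \tilde\rho$ and then changes variables in the test function $\tilde\zeta(x) = \zeta(\Sigma^{1/2}x+a)$ (picking up the factor $\lambda_{\max}$ from $\nabla\tilde\zeta = \Sigma^{1/2}(\nabla\zeta)\circ A$), whereas you fold the affine map into the transport map $T_p = \Sigma^{1/2}\widetilde T + a$ before applying the transfer argument — both bookkeeping conventions yield the identical constant $\lambda_{\max}\exp(R^2/\lambda_{\min})$.
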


\begin{theorem}[Isoperimetric inequalities] \label{application-thm-2}
Assume that Assumptions \ref{condn:well-defined}, \ref{condn:semi-log-convex} and \ref{condn:semi-log-concave} hold. Let $\Phi$ be the cumulative distribution function of $\gamma_1$ on $\mathbb R$, that is,
\begin{equation*}
    \Phi(x)=\gamma_1(-\infty,x)=\frac{1}{\sqrt{2\pi}} \int_{-\infty}^x \exp\left(-\frac{y^2}{2} \right) \diff y, \quad  -\infty < \forall x < +\infty
\end{equation*}
and $B_2^d:=\{x\in \mathbb R^d: |x| \leq 1 \} $ be the unit ball in $\mathbb R^d$.
\begin{itemize}
    \item[(1)] Let $A_t:=A +tB_2^d$ for any Borel set $A \subseteq \mathbb R^d$ and $t \ge 0$, then
    \begin{equation*}
    p\left(A_t \right) \ge \Phi\left(p(A) +\frac{t}{C} \right), \quad 
    C :=
    \begin{cases}
            1/\sqrt{\kappa},             \ &  \text{if $\kappa D^2 \ge 1$, } \\
            \exp\left(\frac{1-\kappa D^2}{2} \right) D,  \ & \text{if $\kappa D^2 <1$.}
        \end{cases}
\end{equation*}

\item[(2)] Let $p:=N(a,\Sigma) * \rho$ where $\rho$ is a probability measure on $\mathbb R^d$ and is supported on a ball of radius $R$. Set 
%where $N(a,\Sigma)$ is a Gaussian random vector on $\mathbb R^d$ with mean $a$ and covariance $\Sigma$. Set 
$\lambda_{\min} :=\lambda_{\min}(\Sigma), \lambda_{\max} :=\lambda_{\max}(\Sigma)$ and
\begin{equation*}
    C := (\lambda_{\min} \lambda_{\max})^{1/2} \exp\left(\frac{R^2}{2\lambda_{\min}} \right).
\end{equation*}
Then
\begin{equation*}
p(A_t) \ge \Phi\left(p(A) +\frac{t}{C} \right), \quad \quad A_t:=A +tB_2^d.
\end{equation*}
\end{itemize}

\end{theorem}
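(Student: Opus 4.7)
The plan is to transfer the classical Gaussian isoperimetric inequality of Borell and Sudakov--Tsirelson from $\gamma_d$ to the target measure $p$ via the Lipschitz transport maps already constructed in Theorems~\ref{main-thm2} and \ref{main-thm3}. The Gaussian isoperimetric inequality states that for every Borel set $B \subseteq \mathbb{R}^d$ and every $s \geq 0$,
\[
\gamma_d(B + s B_2^d) \;\geq\; \Phi\bigl(\Phi^{-1}(\gamma_d(B)) + s\bigr),
\]
and this is the only non-transport ingredient entering the argument.

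For part (1), Theorems~\ref{main-thm1} and \ref{main-thm2} produce a diffeomorphism $X_1 : \mathbb{R}^d \to \mathbb{R}^d$ with $(X_1)_\# \gamma_d = p$ and Lipschitz constant exactly the constant $C$ stated in the theorem. Given a Borel set $A \subseteq \mathbb{R}^d$, I would set $B := X_1^{-1}(A)$, so that $\gamma_d(B) = p(A)$. The $C$-Lipschitz property of $X_1$ then yields the elementary inclusion
\[
X_1(B + s B_2^d) \;\subseteq\; X_1(B) + C s B_2^d \;\subseteq\; A + C s B_2^d \;=\; A_{Cs},
\]
whence $p(A_{Cs}) \geq \gamma_d(B + s B_2^d)$. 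Combining this with the Gaussian isoperimetric inequality and reparameterizing $s = t/C$ delivers $p(A_t) \geq \Phi\bigl(\Phi^{-1}(p(A)) + t/C\bigr)$.

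For part (2), a short preliminary reduction handles the general covariance $\Sigma$ of the Gaussian factor, which is not covered verbatim by Theorem~\ref{main-thm3}. Let $q$ denote the push-forward of $p$ under $x \mapsto \Sigma^{-1/2}(x - a)$. If $X = a + \Sigma^{1/2} Y + Z$ realizes $p$ with $Y \sim \gamma_d$ and $Z \sim \rho$ independent, then $\Sigma^{-1/2}(X - a) = Y + \Sigma^{-1/2} Z$, so $q = \gamma_d * \tilde\rho$, where $\tilde\rho$ is the push-forward of $\rho$ under $\Sigma^{-1/2}$ and hence is supported in a ball of radius $R/\sqrt{\lambda_{\min}}$. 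Theorem~\ref{main-thm3} applied with $\sigma = 1$ and $R \mapsto R/\sqrt{\lambda_{\min}}$ yields a Lipschitz transport $S : \gamma_d \to q$ with constant $\exp(R^2/(2 \lambda_{\min}))$, and composing with the affine dilation $y \mapsto a + \Sigma^{1/2} y$ produces a Lipschitz transport $T : \gamma_d \to p$ whose constant acquires an extra factor of $\|\Sigma^{1/2}\|_{\mathrm{op}}$. The transfer argument of part~(1) applied to $T$ concludes the proof.

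The main obstacle is purely organizational: verifying that the Lipschitz inclusion $X_1(B + s B_2^d) \subseteq A_{Cs}$ together with the composition of the affine dilation and the nonlinear Lipschitz transport produces exactly the constant $C$ declared in the statement. No analytic input beyond the Lipschitz bounds of Theorems~\ref{main-thm2} and \ref{main-thm3} and the Gaussian isoperimetric inequality is needed.
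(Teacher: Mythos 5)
Your argument follows the paper's route almost exactly: transfer the Borell--Sudakov--Tsirelson Gaussian isoperimetric inequality through the Lipschitz flow map $X_1$ using the elementary inclusion of enlargements, and handle the general covariance $\Sigma$ by reducing to $\tilde p = \gamma_d * \tilde\rho$ via the affine map $x \mapsto \Sigma^{-1/2}(x-a)$ and then composing back with $y\mapsto a+\Sigma^{1/2}y$. The paper organizes the inclusion in part (1) as $X_1^{-1}(E) + (t/C)B_2^d \subseteq X_1^{-1}(E_t)$ (preimage side), while you write $X_1(B + sB_2^d) \subseteq A_{Cs}$ (image side); these are trivially equivalent since $X_1$ is a bijection.

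Two points worth flagging, both favoring your version. First, you use the Gaussian isoperimetric inequality in its correct form $\gamma_d(B+sB_2^d)\ge\Phi\bigl(\Phi^{-1}(\gamma_d(B))+s\bigr)$ and accordingly land on $p(A_t)\ge\Phi\bigl(\Phi^{-1}(p(A))+t/C\bigr)$; the paper writes $\Phi(\gamma_d(K)+t)$ and $\Phi(p(A)+t/C)$ without the inner $\Phi^{-1}$, which as literally written is false (take $t=0$). The intended reading is surely $\Phi^{-1}(p(A))$ inside, and your phrasing makes this explicit. Second, in part (2) you correctly obtain from Theorem~\ref{main-thm3} (with $\sigma=1$ and radius $R/\sqrt{\lambda_{\min}}$) a Lipschitz constant $\exp\bigl(R^2/(2\lambda_{\min})\bigr)$ for $\gamma_d\to\tilde p$, and after composing with the $\lambda_{\max}^{1/2}$-Lipschitz affine map $y\mapsto a+\Sigma^{1/2}y$ you get $C=\lambda_{\max}^{1/2}\exp\bigl(R^2/(2\lambda_{\min})\bigr)$. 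The paper's proof instead asserts $(\lambda_{\min})^{1/2}\exp\bigl(R^2/(2\lambda_{\min})\bigr)$ for the $\tilde p$ step, which does not follow from Theorem~\ref{main-thm3} and propagates an extra $\lambda_{\min}^{1/2}$ into the stated $C=(\lambda_{\min}\lambda_{\max})^{1/2}\exp\bigl(R^2/(2\lambda_{\min})\bigr)$; your constant is also the one consistent with the constant $\frac{1}{2}\lambda_{\max}\exp(R^2/\lambda_{\min})$ appearing in Theorem~\ref{application-thm-1}(2). So your result does not literally match the stated theorem in part (2), but the discrepancy traces to what appears to be a slip in the paper's proof rather than a gap in yours.
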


%%%%%%%%%%%%%%%%%%%%%%%%%%%%%%%%%%%%%%%%%%%%%%%%%
Finally, let $\eta: \mathbb R^d \rightarrow \mathbb R$ be any continuously differentiable function such that
$
\int_{\mathbb R^d} \eta(x) p(x) \, \diff x =0.
$ 

\begin{theorem}[$q$-Poincar{\'e} inequalities]
\label{thm-q-poincare}
Suppose that Assumptions \ref{condn:well-defined}, \ref{condn:semi-log-convex} and \ref{condn:semi-log-concave} hold.
\begin{itemize}
    \item[(1)] Let $q \geq 2 $ be an even integer and $\eta \in L^q(\gamma_d)$, then it holds that
    \begin{equation*}
   \int_{\mathbb R^d} \eta^q(x) p(x) \, \diff x \leq \left( 
 \int_{\mathbb R^d} |\nabla \eta(x)|^q p(x) \, \diff x \right)
    \begin{cases}
     C^{\star}_1   & \text{if $\kappa D^2 \geq 1$}, \\
       C^{\star}_2  & \text{if $\kappa D^2 <1$.}
    \end{cases}
    \end{equation*}
where
\begin{equation*}
C^{\star}_1 := \left( \frac{q-1}{\kappa} \right)^{q/2}, \quad C^{\star}_2 :=D^q \exp \left(\frac{q(1-\kappa D^2)}{2} \right).
\end{equation*}

\item[(2)] Fix a probability measure $\rho$ on $\mathbb R^d$ supported on a ball of radius $R$, and let $p:= N(a,\Sigma) * \rho$ and denote $\lambda_{\min} :=\lambda_{\min}(\Sigma) $ and $\lambda_{\max} :=\lambda_{\max}(\Sigma)$. Then for any $\eta \in L^q(\gamma_d)$ with even integer $q \geq 2$, it holds that
\begin{equation*}
 \int_{\mathbb R^d} \eta^q(x) p(x) \, \diff x \le (q-1)^{\frac q 2} (\lambda_{\min} \lambda_{\max})^{\frac q 2} \exp\left(\frac{q R^2}{2 \lambda_{\min}} \right)  \int_{\mathbb R^d} |\nabla \eta(x)|^q p(x) \, \diff x.
\end{equation*}
\end{itemize}
\end{theorem}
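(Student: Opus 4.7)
The plan is to transfer a Gaussian $L^q$-Poincar{\'e} inequality onto $p$ by means of the Lipschitz transport map $T := X_1$ supplied by Theorems~\ref{main-thm1}--\ref{main-thm3}. The starting point is the classical moment-form Gaussian Poincar{\'e} inequality: for every $g \in C^1(\sR^d)$ with $\int g\, \diff \gamma_d = 0$ and every even integer $q \geq 2$,
\begin{equation*}
    \int_{\sR^d} g^q \, \diff \gamma_d \;\le\; (q-1)^{q/2} \int_{\sR^d} |\nabla g|^q \, \diff \gamma_d,
\end{equation*}
which is standard Gaussian material, derivable from the Ornstein--Uhlenbeck semigroup and hypercontractivity, or equivalently from Meyer-type inequalities.

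With this in hand, write $L$ for the Lipschitz constant of $T$ furnished by the earlier theorems: $L = 1/\sqrt{\kappa}$ when $\kappa D^2 \geq 1$, $L = D \exp\bigl((1-\kappa D^2)/2\bigr)$ when $\kappa D^2 < 1$, and $L = (\lambda_{\min}\lambda_{\max})^{1/2}\exp\bigl(R^2/(2\lambda_{\min})\bigr)$ for the Gaussian mixture case (the last obtained from Theorem~\ref{main-thm3} by first normalizing the covariance through the affine map $y\mapsto \Sigma^{-1/2}(y-a)$ and then composing back with $x\mapsto \Sigma^{1/2}x + a$, which absorbs the eigenvalue factors into the Lipschitz constant). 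Apply the Gaussian $L^q$-inequality above to the test function $g := \eta\circ T$. Because $T$ pushes $\gamma_d$ forward onto $p$, we have $\int g\, \diff \gamma_d = \int \eta\, \diff p = 0$, and the mean-zero hypothesis is met. The chain rule gives $\nabla g(x) = \nabla T(x)^{\top}\,\nabla \eta(T(x))$, so $|\nabla g(x)| \le \|\nabla T(x)\|_{\mathrm{op}}\,|\nabla \eta(T(x))| \le L\,|\nabla \eta(T(x))|$ pointwise in $x$.

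Applying the change-of-variables identity $\int F\circ T\, \diff \gamma_d = \int F\, \diff p$ to both sides then yields
\begin{equation*}
    \int_{\sR^d} \eta^q\, \diff p \;=\; \int_{\sR^d} g^q\, \diff \gamma_d \;\le\; (q-1)^{q/2} \int_{\sR^d} |\nabla g|^q \, \diff \gamma_d \;\le\; (q-1)^{q/2} L^q \int_{\sR^d} |\nabla \eta|^q\, \diff p,
\end{equation*}
and substituting the three values of $L$ returns $C_1^\star$, $C_2^\star$, and the mixture constant.

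Almost all the substantive work is upstream, in Theorems~\ref{main-thm2}--\ref{main-thm3}; once the Lipschitz constant $L$ is fixed, the transfer itself is essentially a one-line computation. The only genuinely non-routine input is the sharp coefficient $(q-1)^{q/2}$ in the Gaussian $L^q$-Poincar{\'e} inequality, which I would expect to be the main point to justify (or cite) carefully. A minor book-keeping concern is the reduction in the Gaussian mixture case from anisotropic covariance $\Sigma$ to an isotropic source via the affine rescaling, but this is straightforward once Theorem~\ref{main-thm3} is granted.
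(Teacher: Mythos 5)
Your proposal follows essentially the same route as the paper: start from the Gaussian $L^q$-Poincar\'e inequality with constant $(q-1)^{q/2}$ (which the paper cites from Nourdin--Peccati, Proposition~3.1), apply it to $g = \eta\circ X_1$ using the chain-rule bound $|\nabla g|\le L\,|(\nabla\eta)\circ X_1|$ with $L = \mathrm{Lip}(X_1)$ from Theorems~\ref{main-thm2}--\ref{main-thm3}, and invoke the push-forward identity $p=\gamma_d\circ X_1^{-1}$; for the anisotropic mixture you make the same affine reduction $y\mapsto\Sigma^{-1/2}(y-a)$ followed by composition with $x\mapsto\Sigma^{1/2}x+a$ that the paper uses. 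The only cosmetic difference is the source you would cite for the Gaussian $q$-Poincar\'e inequality.
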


%%%%%%%%%%%%%%%%%%%%%%%%%%%%%%%%%%%%%%%%%%%%%%%
\subsection{Non-asymptotic bounds for empirical measures}

Let 
 $\mu$ be a probability distribution on $\mathbb R^d$ and 
\begin{equation}\label{eq-empirical-measure}
    \mu_n := \frac{1}{n} \sum_{i=1}^n \delta_{X_i},
\end{equation}
be the empirical measure, where 
 $(X_i)_{i = 1}^n$ are i.i.d. samples drawn from $\mu$.
 Deriving the non-asymptotic convergence rate under the Wasserstein distance of the empirical measure $\mu_n$ and the probability measure $\mu$ on Polish space is one of the most important topics in statistics, probability, and machine learning. In recent years, significant progress has been made on this topic.
 When $p =1$, the Kantorovich-Rubinstein duality \cite{kantorovich1958space} implies that $W_1(\mu_n,\mu)$ is equivalent to the supremum of the empirical process indexed by Lipschitz functions. As a consequence, \cite{dudley1969speed} provides sharp lower and upper bounds of $\mathbb E \left[W_1(\mu_n,\mu) \right]$ for $\mu$ supported on a bounded finite dimensional set. Subsequently, \cite{talagrand1994transportation} studies the case when $\mu$ is the uniform distribution on a $d$-dimensional unit cube. For general distributions, \cite{boissard2014mean,dereich2013constructive,fournier2015rate} establish sharp upper bounds of $\mathbb E\left[W_p(\mu_n,\mu) \right]$ in finite dimensional Euclidean spaces. Recently, by extending finite dimensional spaces to infinite dimensional functional spaces, \cite{lei2020convergence} establishes similar results for general distributions. 
 
 Besides the above mentioned bounds in expectation, 
   \cite{weed2019sharp}  obtains a high probability bound on $W_p(\mu_n,\mu) $  for measures
   $\mu$ with bounded supports.
 By applying Sanov's theorem to independent random variables, \cite{bolley2007quantitative} establishes concentration inequalities for empirical measures on non-compact space.
 In this subsection, we will give  a high probability bound on $W_2(\mu_n,\mu)$ by the 
 Lipschitz transport properties proved in Theorems \ref{main-thm2} and \ref{main-thm3}.
To begin with, we will review the transportation inequality defined in Definition \ref{eq-def-transport}, the non-asymptotic convergence rate of $\mathbb E\left[ W_p(\mu_n,\mu) \right]$ as given in Theorem \ref{thm-empirical-bound}, and its  concentration inequality for $p=2$ as stated in Theorem \ref{thm-concentration-ineq}. 
Then, we will derive the  non-asymptotic convergence rate for $W_p(\mu_n,\mu)$, as stated in Theorem \ref{thm-main-concentration-ineq} 
by combining  the transportation inequality of the Gaussian measure on $\mathbb R^d$ as established in \cite{talagrand1994transportation} and the transportation inequality of the push-forward measure of the Gaussian measure under Lipschitz mapping, as shown in Lemma \ref{lem-lip-function}.

\begin{definition}[Transportation inequality]\label{eq-def-transport}
The probability measure $\mu$ satisfies the $L^p$-transportation inequality on $ \mathbb R^d $ if there is some constant $C>0$ such that for any probability measure $\nu$, $W_p(\mu,\nu) \leq \sqrt{2C H(\nu \mid \mu)}$.
To be short, we write $\mu \in \mathrm{T_p}(C)$ for this relation.
\end{definition}

\begin{lemma}[\cite{djellout2004transportation}] \label{lem-lip-function}
Assume that $\mu \in \mathrm{T_p} (C)$ on $ \mathbb R^d $. If $\Phi: \mathbb R^d \rightarrow \mathbb R^d $ is Lipschitz continuous with constant $\alpha > 0$, 
% such that
% \begin{equation*}
%     \left| \Phi(x) - \Phi(y) \right| \leq \alpha |x-y|, \quad \text{ for all $ x,y\in \mathbb R^d $ },
% \end{equation*}
then $\nu =\mu \circ \Phi^{-1} \in \mathrm{T_p}(\alpha^2 C)$ on $ \mathbb R^d $.
\end{lemma}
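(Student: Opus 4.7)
The plan is to pull back any candidate probability measure on the target side to one on the source side with the same relative entropy, apply the assumed transportation inequality $\mu\in\mathrm{T}_p(C)$ there, and then transfer the resulting Wasserstein bound forward using the Lipschitz property of $\Phi$. Concretely, fix any $\tilde\nu\in\mathcal P(\mathbb R^d)$; if $H(\tilde\nu\mid\nu)=+\infty$ the claim is vacuous, so I may assume $\tilde\nu\ll\nu$ with density $f=\diff\tilde\nu/\diff\nu$.

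First I would define the pullback $\tilde\mu$ on $\mathbb R^d$ by $\diff\tilde\mu/\diff\mu:=f\circ\Phi$. Because $\nu=\mu\circ\Phi^{-1}$, one checks $\int(f\circ\Phi)\,\diff\mu=\int f\,\diff\nu=1$, so $\tilde\mu$ is a genuine probability measure, and a routine change of variables yields the two identities at the heart of the argument: for every bounded Borel $g$,
\begin{equation*}
\int g\,\diff(\tilde\mu\circ\Phi^{-1})=\int(g\circ\Phi)(f\circ\Phi)\,\diff\mu=\int gf\,\diff\nu=\int g\,\diff\tilde\nu,
\end{equation*}
so $\tilde\mu\circ\Phi^{-1}=\tilde\nu$; and
\begin{equation*}
H(\tilde\mu\mid\mu)=\int(f\circ\Phi)\log(f\circ\Phi)\,\diff\mu=\int f\log f\,\diff\nu=H(\tilde\nu\mid\nu).
\end{equation*}

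Next I would exploit Lipschitz contraction of $W_p$: for any coupling $\Pi\in\mathcal C(\mu,\tilde\mu)$, the product pushforward $\Pi\circ(\Phi\times\Phi)^{-1}$ lies in $\mathcal C(\nu,\tilde\nu)$, and the Lipschitz bound $|\Phi(x)-\Phi(y)|\le\alpha|x-y|$ gives
\begin{equation*}
W_p(\nu,\tilde\nu)^p\le\int|\Phi(x)-\Phi(y)|^p\,\Pi(\diff x,\diff y)\le\alpha^p\int|x-y|^p\,\Pi(\diff x,\diff y).
\end{equation*}
Optimizing over $\Pi$ yields $W_p(\nu,\tilde\nu)\le\alpha\,W_p(\mu,\tilde\mu)$, and combining with $\mu\in\mathrm{T}_p(C)$ together with the entropy identity above produces
\begin{equation*}
W_p(\nu,\tilde\nu)\le\alpha\sqrt{2C\,H(\tilde\mu\mid\mu)}=\sqrt{2\alpha^2 C\,H(\tilde\nu\mid\nu)},
\end{equation*}
i.e., $\nu\in\mathrm{T}_p(\alpha^2 C)$. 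There is no real obstacle here: the proof is essentially bookkeeping around the two change-of-variable identities, and the only delicate point is recognizing that the lifted measure $\tilde\mu$ achieves \emph{equality} (rather than the mere inequality supplied by the data-processing property) of relative entropy with $\tilde\nu$, which is precisely what delivers the tight constant $\alpha^2 C$ rather than a weaker one.
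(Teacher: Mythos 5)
Your proof is correct and is exactly the standard argument from the cited reference \cite{djellout2004transportation} (Lemma~2.1 there); the paper itself states this lemma by citation without reproving it. Your remark about needing the pullback construction to achieve equality of relative entropies (rather than relying on the data-processing inequality, which points the wrong way) is the right observation and is precisely what yields the clean constant $\alpha^2 C$.
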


% Indeed, by the Glivenko-Cantelli theorem, it holds $\mathbb P( \lim_{n\to \infty} \mu_n =\mu ) =1$, where the convergence is understood in the sense of the weak topology of measures.

\begin{theorem}[\cite{fournier2015rate}] \label{thm-empirical-bound}
    Let $p>0$, assume that for some $r>p$ and $\int_{\mathbb R^d} |x|^r \, \mu(\diff x)$ is finite. Then there exists a constant $C >0$ depending only on $p,r,d$ such that for all $n \geq 1$,
    \begin{align*}
      \mathbb E \left[ W_p(\mu_n, \mu) \right] 
       \leq C \left( \int_{\mathbb R^d} |x|^r \, \mu(\diff x)  \right)^{p/r} 
       \begin{cases}
        n^{-\frac 1 2} + n^{-\frac{r-p}{r} }, & \text{if $p> d/2$ and $r \neq 2p$ } \\
      n^{-\frac 1 2 } \log(1+n) + n^{-\frac{r-p}{r} }, & \text{if $p=d/2$ and $r \neq 2p$ } \\
      n^{-\frac p d} +n^{-\frac{r-p}{r} }, & \text{if $p < d/2$ and $r \neq \frac{d}{d-p}$ }
      \end{cases}
    \end{align*}
    where the expectation is taken on the samples $X_1, \cdots, X_n$.
\end{theorem}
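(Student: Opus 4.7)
The plan is to follow the classical dyadic partition argument that underlies the Fournier--Guillin bound. First, I would reduce to a compactly supported target by truncation. Set $M > 0$ and split $\mu = \mu \mathbf{1}_{B_M} + \mu \mathbf{1}_{B_M^c}$ where $B_M$ is the ball of radius $M$ around the origin. By Markov's inequality the tail mass is at most $M^{-r}\int |x|^r \diff\mu$, and more importantly $\int_{B_M^c}|x|^p \diff\mu \leq M^{p-r}\int |x|^r \diff\mu$. A standard coupling argument then bounds $\mathbb{E}[W_p^p(\mu_n,\mu)]$ by the sum of $\mathbb{E}[W_p^p]$ for the truncated pieces and a tail term of order $M^{p-r}\int |x|^r \diff\mu$.

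Second, for the compactly supported case I would use a nested dyadic partition of $[-M,M]^d$ into sub-cubes of side $M 2^{-\ell}$ for $\ell = 0, 1, \ldots, L$. Writing $\mathcal{Q}_\ell$ for the partition at level $\ell$ (so $|\mathcal{Q}_\ell| \lesssim 2^{\ell d}$) and exploiting the fact that mass within a single level-$L$ cube can be transported at cost at most $(\sqrt{d}\,M 2^{-L})^p$, one obtains the telescoping bound
\[
W_p^p(\mu_n\mathbf{1}_{B_M},\mu\mathbf{1}_{B_M}) \;\lesssim\; M^p \sum_{\ell=0}^{L} 2^{-\ell p}\sum_{Q \in \mathcal{Q}_\ell}\bigl|\mu_n(Q)-\mu(Q)\bigr| \;+\; M^p 2^{-Lp}.
\]
Taking expectations and applying Cauchy--Schwarz together with the binomial variance bound $\mathbb{E}|\mu_n(Q)-\mu(Q)| \leq \sqrt{\mu(Q)/n}$ yields $\mathbb{E}\sum_{Q\in\mathcal{Q}_\ell}|\mu_n(Q)-\mu(Q)| \leq \sqrt{|\mathcal{Q}_\ell|/n} \lesssim 2^{\ell d/2}/\sqrt{n}$.

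Third, the problem reduces to optimizing the deterministic quantity
\[
M^p \sum_{\ell=0}^{L} 2^{\ell(d/2 - p)} n^{-1/2} \;+\; M^p 2^{-Lp} \;+\; M^{p-r}\!\int |x|^r \diff\mu
\]
over $M, L$. When $p > d/2$ the geometric series converges, so the first term is $O(M^p/\sqrt{n})$ uniformly in $L$; sending $L \to \infty$ kills the discretization term, and matching the $M^p/\sqrt{n}$ rate against the truncation $M^{p-r}$ by choosing $M \asymp n^{1/(2r)}$ gives the $n^{-1/2}+n^{-(r-p)/r}$ rate. When $p < d/2$ the series is dominated by its top index, giving $M^p 2^{L(d/2-p)}/\sqrt{n}$; balancing this against $M^p 2^{-Lp}$ selects $2^L \asymp n^{1/d}$, yielding a stochastic rate $M^p n^{-p/d}$, and then balancing with the truncation chooses $M$ again. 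The borderline $p=d/2$ case produces the logarithmic correction $\log(1+n)$ from the sum $\sum_{\ell \leq L} 1$.

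The main obstacle is the delicate bookkeeping of the three-way balance between truncation, discretization, and stochastic error, since the optimal choice of $M$ and $L$ changes across the regimes $p \lessgtr d/2$; the hypotheses $r \neq 2p$ and $r \neq d/(d-p)$ ensure the truncation and stochastic rates do not coincide, which is precisely what allows one to write a clean sum rather than a sum with an additional logarithmic factor. A secondary subtlety is ensuring that the coupling used to reduce to the truncated problem does not lose a factor depending on $n$: this is handled by coupling each sample individually rather than passing through the full empirical measure.
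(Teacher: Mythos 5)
The paper does not prove Theorem~\ref{thm-empirical-bound}; it is imported verbatim from \cite{fournier2015rate}, so there is no in-paper argument to compare against. Your sketch correctly reconstructs the Fournier--Guillin proof strategy: truncation to $B_M$ with a Markov-type tail bound $\int_{B_M^c}|x|^p\,\diff\mu \leq M^{p-r}\int|x|^r\,\diff\mu$, a nested dyadic partition giving the multi-scale Wasserstein bound, the variance estimate $\mathbb{E}|\mu_n(Q)-\mu(Q)|\lesssim\sqrt{\mu(Q)/n}$ plus Cauchy--Schwarz over cells at each scale, and then a three-way balance over the truncation radius, depth $L$, and sample size in the three regimes $p>d/2$, $p=d/2$, $p<d/2$. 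The role you assign to $r\neq 2p$ and $r\neq d/(d-p)$ (preventing the truncation and stochastic rates from coinciding) is also the right reading of those hypotheses, and your remark about coupling each sample individually is exactly how one avoids an $n$-dependent loss in the reduction.

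One thing to flag: your argument, like the original Fournier--Guillin proof, bounds $\mathbb{E}\left[W_p^p(\mu_n,\mu)\right]$, and indeed \cite{fournier2015rate}, Theorem~1, is stated for $\mathbb{E}(W_p^p)$. The version reproduced in this paper writes $\mathbb{E}\left[W_p(\mu_n,\mu)\right]$ on the left-hand side while keeping the exponent $p/r$ on the moment and the rates $n^{-1/2}$, $n^{-p/d}$, $n^{-(r-p)/r}$ on the right; if that is taken literally, it does not follow directly from Fournier--Guillin, since Jensen would turn $\mathbb{E}[W_p^p]\le C M_r^{p/r}\,n^{-a}$ into $\mathbb{E}[W_p]\le C^{1/p}M_r^{1/r}\,n^{-a/p}$, changing both exponents. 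In short, your proof proves the statement Fournier--Guillin actually establish; the paper's transcription of that statement appears to drop the $p$-th power on $W_p$. A secondary technicality worth noting in a full write-up is that $\mu_n\mathbf{1}_{B_M}$ and $\mu\mathbf{1}_{B_M}$ need not have equal mass, so $W_p$ between the restrictions is not literally defined; Fournier--Guillin handle this by renormalizing (or by a slightly different mass-matching decomposition), and this is where the coupling-per-sample reduction you mention actually enters. None of this changes the structure of your argument, which is sound.
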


% Next, we want to study the concentration properties of $W_p(\mu_n,\mu)$ around its mean, that is to find bounds on the quantities
% \begin{equation*}
%     \mathbb P \left( W_p(\mu_n,\mu) -\mathbb E\left[ W_p(\mu_n,\mu) \right] \geq t \right)
% \end{equation*}
% for all $t \geq 0$.

% By Theorem 12 in \cite{gozlan2007large}, we know transportation inequalities provide a powerful tool to tackle concentration of $W_p(\mu_n,\mu)$ around its mean.
The next result states that a $\mathrm{T_2}(C)$ inequality on $\mu$ implies Gaussian concentration inequality for $W_2(\mu_n,\mu)$. 

\begin{theorem}[\cite{gozlan2007large}] \label{thm-concentration-ineq}
Let a probability measure $\mu$ on $\mathbb R^d$ satisfy the transportation inequality $\mathrm{T_2}(C)$. The following holds:
\begin{equation*}
    \mathbb P\left( W_2(\mu_n,\mu) \geq \mathbb E\left[ W_2(\mu_n,\mu) \right] +t \right) \leq \exp\left(-\frac{nt^2}{C} \right).
\end{equation*}

\end{theorem}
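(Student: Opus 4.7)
The plan is to deduce the concentration bound from three standard ingredients: tensorization of the $\mathrm{T}_2$ transportation inequality on the product space, Lipschitz regularity of the empirical Wasserstein functional, and the Bobkov--Götze characterization of quadratic transportation inequalities through sub-Gaussian concentration.

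The first step is tensorization. Since $\mu$ satisfies $\mathrm{T}_2(C)$ on $\mathbb{R}^d$, I would show that $\mu^{\otimes n}$ satisfies $\mathrm{T}_2(C)$ on $(\mathbb{R}^d)^n$ equipped with the Euclidean product metric $\|(x_1,\ldots,x_n) - (y_1,\ldots,y_n)\|^2 := \sum_{i=1}^n |x_i-y_i|^2$, with the same constant $C$ (independent of $n$). The proof is by induction: writing an arbitrary $\nu^{(n)} \ll \mu^{\otimes n}$ through its disintegration $\nu^{(n)}(\diff x_1,\ldots,\diff x_n) = \nu_1(\diff x_1)\,\nu_2(\diff x_2\mid x_1)\cdots \nu_n(\diff x_n \mid x_1,\ldots,x_{n-1})$ and using the entropy chain rule $H(\nu^{(n)}\mid \mu^{\otimes n}) = \sum_{i=1}^n \int H(\nu_i(\cdot \mid x_{1:i-1}) \mid \mu)\, \nu^{(n)}(\diff x_{1:i-1})$, one applies the scalar $\mathrm{T}_2(C)$ inequality to each conditional and invokes Cauchy--Schwarz to glue the resulting couplings into one on the product space whose quadratic cost is bounded by $2C H(\nu^{(n)}\mid \mu^{\otimes n})$.

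Next, I would verify that the functional $F(x_1,\ldots,x_n) := W_2\!\bigl(\tfrac{1}{n}\sum_{i=1}^n \delta_{x_i},\mu\bigr)$ is $(1/\sqrt{n})$-Lipschitz with respect to the Euclidean product metric. Indeed, the diagonal coupling matching $\delta_{x_i}$ with $\delta_{y_i}$ is admissible between $\mu_n^x$ and $\mu_n^y$, giving $W_2(\mu_n^x,\mu_n^y)^2 \le \tfrac{1}{n}\sum_i |x_i-y_i|^2$, and the triangle inequality for $W_2$ then gives $|F(x)-F(y)| \le W_2(\mu_n^x,\mu_n^y) \le (1/\sqrt n)\,\|x-y\|$. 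Finally I would invoke Bobkov--Götze: the tensorized $\mathrm{T}_2(C)$ inequality implies that every $L$-Lipschitz $G$ on $(\mathbb{R}^d)^n$ satisfies $\mathbb{E}[\exp(\lambda(G-\mathbb{E} G))] \le \exp(C L^2 \lambda^2 / 2)$ for all $\lambda>0$; a standard Chernoff bound with the choice $L = 1/\sqrt n$ and optimization over $\lambda$ yields the exponential tail $\mathbb{P}(F \ge \mathbb{E} F + t) \le \exp(-n t^2 / (2C))$, matching the stated bound up to the conventional normalization of the constant in Definition \ref{eq-def-transport}.

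The main obstacle is the tensorization step: the crux is that the quadratic transportation constant does \emph{not} degrade with $n$ when the product is equipped with the Euclidean (rather than $\ell^\infty$) metric, and this dimension-free behaviour is what ultimately produces the $n$ in the exponent. The Lipschitz estimate is elementary once one recognises that the sample-wise coupling is admissible, and the final passage from a dimension-free transportation inequality to Gaussian concentration is a direct application of the Bobkov--Götze equivalence combined with the exponential Markov inequality.
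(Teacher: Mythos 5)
Your proof is essentially correct and takes the standard route to this result: tensorization of $\mathrm{T}_2$ with the Euclidean product metric (which, unlike $\mathrm{T}_1$, does not degrade with $n$), the elementary $(1/\sqrt{n})$-Lipschitz bound on $x \mapsto W_2(\tfrac{1}{n}\sum_i\delta_{x_i},\mu)$, and the Bobkov--G\"otze passage from transportation to Gaussian concentration followed by Chernoff. The paper itself does not reprove this theorem (it cites \cite{gozlan2007large}), so there is no internal proof to compare against, but the derivation you give is precisely how one deduces the ``easy'' direction of Gozlan's characterization.

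Two remarks. First, the factor-of-two discrepancy you flag is real and worth being blunt about: with the paper's own Definition~\ref{eq-def-transport}, namely $W_2(\mu,\nu)\le\sqrt{2C\,H(\nu\mid\mu)}$, the argument yields $\exp(-nt^2/(2C))$, and this is sharp -- for $\mu=\gamma_1$ and $n=1$ one has $W_2(\delta_{X_1},\gamma_1)=\sqrt{X_1^2+1}$, whose tail decays like $e^{-t^2/2}$ and not $e^{-t^2}$, so the bound $\exp(-nt^2/C)$ cannot hold verbatim with $C=1$. The stated constant is consistent with the alternative convention $W_2\le\sqrt{C\,H}$ used in Gozlan's paper, so this is a convention mismatch between Definition~\ref{eq-def-transport} and the imported statement rather than a gap in your argument. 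Second, a small inaccuracy in the tensorization sketch: for $\mathrm{T}_2$ no Cauchy--Schwarz step is needed. The gluing of the conditionally optimal couplings gives directly
\begin{equation*}
W_2^2(\nu^{(n)},\mu^{\otimes n}) \le \sum_{i=1}^n \mathbb{E}\bigl[W_2^2(\nu_i(\cdot\mid X_{1:i-1}),\mu)\bigr] \le 2C\sum_{i=1}^n \mathbb{E}\bigl[H(\nu_i(\cdot\mid X_{1:i-1})\mid\mu)\bigr] = 2C\,H(\nu^{(n)}\mid\mu^{\otimes n}),
\end{equation*}
using only the additivity of the quadratic cost and the entropy chain rule; Cauchy--Schwarz is what one resorts to for $\mathrm{T}_1$, where the cost does not decompose quadratically and one loses a dimensional factor. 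Neither issue affects the validity of your overall argument.
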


For any probability measure $\nu$ on $\mathbb R^d$ with a finite fifth moment, let us define
\begin{align}\label{eq-mathsf-const-M}
    \mathsf M(\nu, d, n)  := c_d \left( \int_{\mathbb R^d} |x|^5 \, \nu(\diff x) \right)^{2/5} 
        \begin{cases}
         n^{-1/2} & \text{if $d<4$} \\
         n^{-1/2} \log(1+n) & \text{if $d=4$ } \\
         n^{-2/d} & \text{if $d>4$ }
        \end{cases}
\end{align}
where the constant $c_d$ depends only on $d$. On the other hand, for the $L^2$-transportation inequality $\mathrm{T_2}(C)$, recall that Talagrand \cite{talagrand1996transportation} proved that the standard Gaussian measure $\gamma_1= N(0,1)$ satisfies $\mathrm{T_2}(C)$ on $\mathbb R$ w.r.t. the Euclidean distance with the sharp constant $C = 1$ and found that $ \mathrm{T_2}(C) $ is stable for product (or independent) tensorization.  Therefore, combining Lemma \ref{lem-lip-function}, Theorems \ref{main-thm2}, \ref{main-thm3}, \ref{thm-empirical-bound} and \ref{thm-concentration-ineq}, we obtain the following results.

\begin{theorem}[Concentration for empirical measures] \label{thm-main-concentration-ineq}
Suppose that Assumptions \ref{condn:well-defined}, \ref{condn:semi-log-convex} and \ref{condn:semi-log-concave} hold, and let probability measure $\nu$ has a finite fifth moment. %\ref{condn:semi-log-concave}-(iii) hold.
\begin{itemize}
    \item[(1)] If $\kappa D^2 \geq 1$, then $\nu \in \mathrm{T_2} (1/\kappa)$. Moreover, for any $\varepsilon \in (0,1)$, it holds that
    \begin{align*}
         W_2(\nu_n,\nu) 
          \leq  \left( \frac{\log \varepsilon^{-1} }{n \kappa} \right)^{1/2} + \mathsf{M}(\nu,d,n)
     \end{align*}
with probability at least $1-\varepsilon$ and constant $\mathsf M(\nu, d, n)$ given in (\ref{eq-mathsf-const-M}).

   \item[(2)] If $\kappa D^2 < 1$, then $\nu \in \mathrm{T_2} \left( D^2 \exp(1-\kappa D^2) \right) $. Moreover, for any $\varepsilon \in (0,1)$, it holds that
    \begin{align*}
        W_2(\nu_n,\nu) 
          \leq  \left\{ \frac{\log \varepsilon^{-1} }{nD^2 \exp(1-\kappa D^2) } \right\}^{1/2} + \mathsf M(\nu, d, n)
    \end{align*}
 with probability at least $1-\varepsilon $ and constant $\mathsf M(\nu, d, n)$ given in (\ref{eq-mathsf-const-M}).

    \item[(3)] If $\nu = N(0,\sigma^2 \mathbf{I}_d) * \rho$  where $\rho$ is a probability measure supported on a ball of radius $R$ on $\mathbb R^d$, then $\nu \in \mathrm{T_2} \left( \sigma^2 \exp(R^2/\sigma^2) \right)$. Moreover, for any $\varepsilon \in (0,1) $, it holds that
    \begin{align*}
         W_2(\nu_n,\nu)  
         \leq \left\{ \frac{\log \varepsilon^{-1} }{n\sigma^2 \exp(R^2/\sigma^2) } \right\}^{1/2}
         + \mathsf M(\nu, d, n)
    \end{align*}
  with  probability at least $1-\varepsilon $ and constant $\mathsf M(\nu, d, n)$ given in (\ref{eq-mathsf-const-M}).
\end{itemize}

\end{theorem}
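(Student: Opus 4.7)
The overall strategy is to split the argument into two independent ingredients: first, establish that the target measure $\nu$ satisfies an $L^2$-transportation inequality $\mathrm{T}_2(C)$ with the stated constant $C$ in each of the three cases; second, combine this with the Gaussian concentration result (Theorem \ref{thm-concentration-ineq}) and the expectation bound from Fournier-Guillin (Theorem \ref{thm-empirical-bound}) to obtain the high-probability bound on $W_2(\nu_n, \nu)$. All the heavy lifting has already been done in Theorems \ref{main-thm2} and \ref{main-thm3}, so the role of this proof is to assemble the pieces.

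\textbf{Step 1: Establishing $\mathrm{T}_2(C)$.} The starting observation is Talagrand's transportation inequality: the one-dimensional standard Gaussian $\gamma_1$ satisfies $\mathrm{T}_2(1)$, and by tensorization across independent coordinates, $\gamma_d \in \mathrm{T}_2(1)$ on $\mathbb R^d$. By Theorem \ref{main-thm1}, the F\"ollmer flow map $X_1:\mathbb R^d\to \mathbb R^d$ pushes $\gamma_d$ forward to $\nu$, and by Theorems \ref{main-thm2} and \ref{main-thm3}, $X_1$ is $\alpha$-Lipschitz with
\begin{equation*}
\alpha =
\begin{cases}
1/\sqrt{\kappa}, & \kappa D^2 \ge 1,\\
\exp\bigl((1-\kappa D^2)/2\bigr) D, & \kappa D^2 < 1,\\
\sigma\exp\bigl(R^2/(2\sigma^2)\bigr), & \nu = N(0,\sigma^2\mathbf I_d)*\rho.
\end{cases}
\end{equation*}
Applying Lemma \ref{lem-lip-function} (the push-forward of a $\mathrm T_2(C)$ measure under an $\alpha$-Lipschitz map satisfies $\mathrm T_2(\alpha^2 C)$) immediately gives $\nu \in \mathrm T_2(\alpha^2)$, which reproduces $1/\kappa$, $D^2 \exp(1-\kappa D^2)$, and $\sigma^2\exp(R^2/\sigma^2)$, respectively.

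\textbf{Step 2: Concentration and expectation control.} With $\nu \in \mathrm T_2(C)$ in hand, Theorem \ref{thm-concentration-ineq} yields, for every $t>0$,
\begin{equation*}
\mathbb P\bigl(W_2(\nu_n,\nu) \ge \mathbb E[W_2(\nu_n,\nu)] + t\bigr) \le \exp(-nt^2/C).
\end{equation*}
Setting the right-hand side equal to $\varepsilon$ and solving gives the deviation term $t = \sqrt{C\log(\varepsilon^{-1})/n}$, which matches the first term displayed in each of the three conclusions. For the expectation term, I invoke Theorem \ref{thm-empirical-bound} with $p=2$ and $r=5$ (permissible because $\nu$ is assumed to have a finite fifth moment): this yields $\mathbb E[W_2(\nu_n,\nu)] \le \mathsf M(\nu,d,n)$, noting that the three-case split on $d$ in the definition of $\mathsf M(\nu,d,n)$ corresponds exactly to the trichotomy $p>d/2$, $p=d/2$, $p<d/2$ in Theorem \ref{thm-empirical-bound} when $p=2$, with the $n^{-(r-p)/r} = n^{-3/5}$ contribution absorbed into the dominant term. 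A triangle-type union in probability then combines the expectation bound with the concentration bound to deliver the claimed inequality with probability at least $1-\varepsilon$.

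\textbf{Expected main obstacle.} Since both the Lipschitz transport (Theorems \ref{main-thm2}, \ref{main-thm3}) and the two abstract inputs (Lemma \ref{lem-lip-function}, Theorems \ref{thm-empirical-bound}-\ref{thm-concentration-ineq}) are already stated, there is essentially no analytic obstacle; the proof is a three-line composition. The only point requiring slight care is the bookkeeping in Step 2: verifying that with $p=2$ and $r=5$ the Fournier-Guillin trichotomy indeed reduces to the $d<4$, $d=4$, $d>4$ split in the definition of $\mathsf M(\nu,d,n)$, and confirming that the $n^{-(r-p)/r}$ residual term is dominated in every regime so that it can be safely suppressed in the stated bound.
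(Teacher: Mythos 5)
Your proposal follows exactly the paper's intended route: the sentence immediately preceding the theorem in the paper announces that the result follows by ``combining Lemma \ref{lem-lip-function}, Theorems \ref{main-thm2}, \ref{main-thm3}, \ref{thm-empirical-bound} and \ref{thm-concentration-ineq}'' (no separate proof appears in the appendix), and your Step 1 / Step 2 decomposition is precisely that assembly. Your derivation of $\nu\in\mathrm{T}_2(\alpha^2)$ with $\alpha^2 \in \{1/\kappa,\ D^2\exp(1-\kappa D^2),\ \sigma^2\exp(R^2/\sigma^2)\}$ is correct, as is the choice $p=2$, $r=5$ in Theorem \ref{thm-empirical-bound} (with the $n^{-3/5}$ residual absorbed since $3/5 > 1/2 > 2/d$ in every branch).

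However, you assert that the deviation term $t=\sqrt{C\log\varepsilon^{-1}/n}$ ``matches the first term displayed in each of the three conclusions,'' and this is false for cases (2) and (3). Solving $\exp(-nt^2/C)=\varepsilon$ gives $t = \sqrt{C\log\varepsilon^{-1}/n}$, i.e.\ the $\mathrm{T}_2$ constant $C$ belongs in the numerator under the square root. In case (1) this reproduces the paper's display because $C=1/\kappa$ makes $\sqrt{C\log\varepsilon^{-1}/n} = \sqrt{\log\varepsilon^{-1}/(n\kappa)}$. But in cases (2) and (3) the paper writes $\bigl\{\log\varepsilon^{-1}/(n D^2 e^{1-\kappa D^2})\bigr\}^{1/2}$ and $\bigl\{\log\varepsilon^{-1}/(n \sigma^2 e^{R^2/\sigma^2})\bigr\}^{1/2}$, placing the constant in the denominator — which is the reciprocal of what the argument delivers, and which would have the pathological feature that a \emph{weaker} transportation inequality (larger $C$) yields a \emph{smaller} deviation. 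Your computation is the correct one; the paper's displays in (2) and (3) appear to be typographical errors (likely a pattern-match from the case-(1) form, where the constant coincidentally lands in the denominator). You should have flagged this discrepancy rather than claimed agreement. One minor further note: no ``triangle-type union in probability'' is needed in Step 2 — the expectation bound $\E[W_2(\nu_n,\nu)]\le\mathsf M(\nu,d,n)$ is deterministic and simply substitutes into the concentration event, so no additional failure probability is incurred.
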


%%%%%%%%%%%%%%%%%%%%%%%%%%%%%%%%%%%%%%%%%%%%%%%%%%%%%%%%%%%%%%%%%%%%%%%%%%%%%%%%%%%%%%%%%%%%%%%%%%%%%%%%%%%%%%%%%%%%%%%%%%%%%%%%

\section{Conclusion}
We have constructed the F{\"o}llmer flow originating from a standard Gaussian measure and hitting a general target measure.
By studying the well-posedness of the F{\"o}llmer flow, we have established the Lipschitz property of its flow map at time $t = 1$. 
Such a Lipschitz transport map enables get functional  inequalities with dimension-free constants  and derive concentration inequalities for the empirical measure for 
 rich classes of probability measures.
% and performing eigenvalues comparisons and distribution majorization. 
It is worthwhile to notice that the F{\"o}llmer velocity field has an analytic expression that is compatible with Monte Carlo approximations. 
Therefore, a possible direction of future research would be to design general-purpose sampling algorithms and score-based generative models using the F{\"o}llmer flow.
Besides, being limited to scenarios covered in Assumptions \ref{condn:semi-log-convex} and \ref{condn:semi-log-concave}, the work could be extended  to explore weaker and even minimal regularity assumptions on the target measure. For example, replacing semi-log-concavity with ``convexity at infinity" in \cite{bolley2012convergence,cattiaux2014semi} is a potential step.    

%%%%%%%%%%%%%%%%%%%%%%%%%%%%%%%%%%%%%%%%%%%%%%%%%%%%%%%%%%%%%%%%%%%%%%%%%%%%%%%%%%%%%%%%%%%%%%%%%%%%%%%%%%%%%%%%%%%%%%%%%%%%%%%%%%
\newpage 

\begin{appendix}

%%%%%%%%%%%%%%%%%%%%%%%%%%%%%%%%%%%%%%%%%%%%%%%%%%%%%%%%%%%%%%%
\section{Proof of Theorem \ref{main-thm1} and Proposition \ref{prop:vp-ode-eps}}

\subsection{Well-definedness of the F{\"o}llmer flow}
Recall that the velocity field $V(t, x)$ defined in (\ref{eq:vector-field}) yields 
\begin{equation*}
V(t, x) :=\frac{\nabla \log \EuScript{Q}_{1-t} r(x)}{t}, \quad r(x):=\frac{p(x)}{\phi(x)}
\end{equation*}
where $\nu(\diff x)=p(x) \diff x$. For any $t \in (0,1]$, then one obtains
\begin{align*}
    \EuScript{Q}_{1-t} r (x) 
   = \int_{\mathbb R^d} \varphi^{tx, 1-t^2} (y) r(y) \diff y 
    = \int_{\mathbb{R}^d} \phi (z) r (tx + \sqrt{1 - t^2}z) \diff z,
\end{align*}
where $\varphi^{tx, 1-t^2} (y)$ is the density of the $d$-dimensional Gaussian measure with mean $ tx$ and covariance $(1-t^2)\mathbf{I}_d $. For the convenience of subsequent calculation, we introduce the following symbols:
\begin{equation*}
S(t, x) :=\nabla \log q_t(x), \quad q_t(x) :=\int_{\mathbb R^d} q(t,x|1,y) p(y) \diff y
\end{equation*}
where $q(t,x|1,y) :=(2\pi (1-t^2))^{-\frac d 2} \exp\left(-\frac{|x-ty|^2}{2(1-t^2)} \right)$ for any $t\in [0,1]$. 
Notice that
\begin{align*}
 \EuScript{Q}_{1-t} r (x)
  = (2\pi)^{d/2} \exp\left(\frac{ |x |^2}{2} \right) \times  \frac{1}{(2\pi(1-t^2))^{d/2}} \int_{\mathbb R^d} p(y)\exp\left(-\frac{|x-ty |^2 }{2(1-t^2)} \right) \diff y.
\end{align*}
Then we have $\nabla \log \EuScript{Q}_{1-t} r(x) = x + S(t, x)$, for any $t \in [0,1]$.

Suppose that the target distribution $p$ satisfies the third moment condition, we can supplement the definition of velocity field $V$ at time $t = 0$, so that $V$ is well-defined on the interval $[0,1]$. Then we have the following result:
\begin{lemma}\label{lem-4.1}
Suppose that $\mathbb E_{p}[|X|^3] <\infty$, then
\begin{equation*}
\lim_{t \downarrow 0} V(t, x) =\lim_{t \downarrow 0} \frac{x +S(t, x)}{t} =\mathbb E_{p}[X].
\end{equation*}
\end{lemma}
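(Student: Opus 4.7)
My plan is to rewrite $(x + S(t,x))/t$ as a conditional-expectation ratio and then pass to the limit by dominated convergence.

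Introduce independent auxiliary variables $Z \sim p$ and $G \sim \gamma_d$; then the density $q_t$ coincides with that of $Y_t := tZ + \sqrt{1-t^2}\, G$. Differentiating $q_t(x) = \int q(t, x \mid 1, y) p(y) \diff y$ under the integral sign (equivalently, applying Tweedie's identity to the Gaussian channel $Y_t \mid Z \sim N(tZ, (1-t^2)\mathbf{I}_d)$) yields
\begin{equation*}
S(t, x) = \nabla \log q_t(x) = \frac{t\, \mathbb{E}[Z \mid Y_t = x] - x}{1 - t^2},
\end{equation*}
so that a short algebraic manipulation gives
\begin{equation*}
\frac{x + S(t, x)}{t} = \frac{\mathbb{E}[Z \mid Y_t = x] - t\, x}{1 - t^2}.
\end{equation*}
Since $1 - t^2 \to 1$ and $t x \to 0$, it suffices to show that $\mathbb{E}[Z \mid Y_t = x] \to \mathbb{E}_p[X]$ as $t \downarrow 0$.

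Factoring out the $y$-independent Gaussian prefactor of $q(t, x \mid 1, y)$, I rewrite
\begin{equation*}
\mathbb{E}[Z \mid Y_t = x] = \frac{\int y\, h_t(x, y)\, p(y)\, \diff y}{\int h_t(x, y)\, p(y)\, \diff y}, \qquad h_t(x, y) := \exp\!\left(\frac{t\langle x, y\rangle - t^2 |y|^2/2}{1 - t^2}\right).
\end{equation*}
For each fixed $x, y$, $h_t(x, y) \to 1$ as $t \downarrow 0$. Maximizing the upper bound $t|x||y| - t^2|y|^2/2$ over $|y| \ge 0$ (attained at $|y| = |x|/t$ with value $|x|^2/2$) yields the $y$-uniform bound $h_t(x, y) \le \exp(|x|^2 / (2(1-t^2)))$, so for $t \in (0, 1/2)$ one has $h_t(x, y) \le e^{2|x|^2/3}$ uniformly in $y$ and $t$.

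Combining this bound with the integrability of $|y| p(y)$, which is ensured by the third-moment assumption, produces $t$-independent dominators for both the numerator and the denominator. Dominated convergence then gives $\int h_t p\, \diff y \to 1$ and $\int y\, h_t p\, \diff y \to \mathbb{E}_p[X]$, so $\mathbb{E}[Z \mid Y_t = x] \to \mathbb{E}_p[X]$, and the claimed limit follows. The main technical step is securing the $y$- and $t$-uniform dominator on $h_t$ via completion of the square; once this is in hand the rest is routine.
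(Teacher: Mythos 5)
Your proof is correct, and it takes a genuinely different and cleaner route than the paper's. The paper resolves the $0/0$ indeterminacy (noting $S(0,x) = -x$) via L'H\^opital, writing $\lim_{t \downarrow 0} V(t,x) = \lim_{t \downarrow 0} \partial_t S(t,x)$, and then grinds out the limit by explicitly computing both $\partial_t q_t(x)$ and $\nabla[\partial_t q_t(x)]$ — a lengthy page of direct differentiations — before passing to the limit term by term; the third-moment hypothesis is used to justify $\lim_{t\downarrow 0}\int |y|^3 q(1,y\mid t,x)\,\diff y < \infty$ in that computation. You instead apply Tweedie's identity to the Gaussian channel $Y_t \mid Z \sim N(tZ, (1-t^2)\rmI_d)$ to obtain the closed form $S(t,x) = (t\,\E[Z\mid Y_t=x] - x)/(1-t^2)$, which algebraically reduces the claim to $\E[Z\mid Y_t = x] \to \E_p[X]$, and you finish by dominated convergence using the $t$- and $y$-uniform bound $h_t(x,y) \le e^{2|x|^2/3}$ for $t \in (0,1/2)$ obtained by completing the square. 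This avoids the derivative computations entirely, is structurally transparent, and — worth noting — requires only $\E_p[|X|] < \infty$ rather than the finite third moment invoked in the statement, so it in fact proves a slightly stronger result. The argument is complete: the pointwise convergence $h_t(x,y)\to 1$, the uniform dominator, and integrability of $|y|p(y)$ together give $\int h_t\,p\to 1$ and $\int y\,h_t\,p\to \E_p[X]$, after which $tx \to 0$ and $1-t^2 \to 1$ yield the stated limit.
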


\begin{proof}
Let $t \to 0$, then it yields
\begin{align*}
\lim_{t \downarrow 0} V(t, x) =\lim_{t \downarrow 0} \partial_t S(t,x) 
= \lim_{t \downarrow 0} \left \lbrace \frac{\nabla[\partial_{t} q_t(x)]} {q_t(x)}- \frac{\partial_{t} q_t(x)} {q_t(x)} S(t, x) \right \rbrace.
\end{align*}
On the one hand, by simple calculation, it holds that
\begin{align*}
    \partial_{t} q_t(x) 
    & = \partial_{t} \int_{\mathbb R^d} q(t, x | 1, y) p(y) \diff y = \partial_{t} \int_{\mathbb R^d} \left[ 2 \pi ( 1 - t^2 ) \right]^{-\frac{d}{2}} \exp \left( - \frac{|x - ty|^2} { 2 (1 - t^2) } \right) p(y) \diff y \\
    &= \frac { t d} {1 - t^2} q_t(x) - \frac{t} {(1 - t^2)^2} |x|^2 q_t(x) + \frac{1 + t^2} {(1 - t^2)^2} \int_{\mathbb R^d} x^{\top} y q(t,x | 1, y) p(y) \diff y \\
    & ~~~ - \frac{t} {(1 - t^2)^2} \int_{\mathbb R^d} |y|^2 q(t,x | 1,y) p(y) \diff y.
\end{align*}
Furthermore, we also obtain
\begin{align*}
       \frac{\partial_{t} q_t(x)} {q_t(x)}
       = \frac {t d} {1 - t^2}
       - \frac{t} {(1 - t^2)^2} |x|^2 + \frac{1 + t^2} {(1 - t^2)^2} \int_{\mathbb R^d} x^{\top} y q(1,y | t,x) \diff y 
       -\frac{t} {(1 - t^2)^2} \int_{\mathbb R^d} |y|^2 q(1,y | t,x) \diff y.
\end{align*}
On the other hand, by straightforward calculation, it yields
\begin{align*}
       \nabla [\partial_{t} q_t(x)]
    &= - \frac { td} {(1 - t^2)^2} x q_t(x) + \frac{t^2 d} {(1 - t^2)^2} \int_{\mathbb R^d} y q(t,x | 1,y) p(y) \diff y \\
    & \quad - \frac{2 t} {(1 - t^2)^2} x q_t(x) + \frac{t} {(1 - t^2)^3}  |x|^2 x q_t(x) - \frac{t^2 |x|^2} {(1 - t^2)^3}  \int_{\mathbb R^d} y q(t,x | 1,y) p(y) \diff y \\
    & \qquad + \frac{1 + t^2} {(1 - t^2)^2} \int_{\mathbb R^d} y q(t,x | 1,y) p(y) \diff y \\
    &\quad - \frac{1 + t^2} {(1 - t^2)^3} \int_{\mathbb R^d} (x^{\top} y)x q(t,x | 1,y) p(y) \diff y
     + \frac{t (1 + t^2)} {(1 - t^2)^3} \int_{\mathbb R^d} (x^{\top} y)y q(t,x | 1,y) p(y) \diff y\\
    & \quad + \frac{t} {(1 - t^2)^3}  \int_{\mathbb R^d} x |y|^2 q(t,x| 1,y) p(y) \diff y - \frac{t^2} {(1 - t^2)^3} \int_{\mathbb R^d} y |y|^2 q(t,x | 1,y) p(y) \diff y.
\end{align*}
Moreover, we also obtain
\begin{align*}
       \frac{\nabla [\partial_{t} q_t(x)]} {q_t(x)}
    & = -\frac {t d } {(1 - t^2)^2} x + \frac{t^2 d} {(1 - t^2)^2} \int_{\mathbb R^d} y q(1,y | t,x) \diff y - \frac{2 tx} {(1 - t^2)^2}   + \frac{t |x|^2 x} {(1 - t^2)^3}  \\
    & \quad  - \frac{t^2} {(1 - t^2)^3} |x|^2 \int_{\mathbb R^d} y q(1,y | t,x) \diff y + \frac{1 + t^2} {(1 - t^2)^2} \int_{\mathbb R^d} y q(1,y | t,x) \diff y \\
    & \quad - \frac{1 + t^2} {(1 - t^2)^3} \int_{\mathbb R^d} (x^{\top} y)x q(1,y | t,x) \diff y
     + \frac{t (1 + t^2)} {(1 - t^2)^3} \int_{\mathbb R^d} (x^{\top} y)y q(1,y | t,x) \diff y \\
    & \quad  + \frac{t} {(1 - t^2)^3} x \int_{\mathbb R^d} |y|^2 q(1,y | t,x) \diff y
     - \frac{t^2} {(1 - t^2)^3} \int_{\mathbb R^d} y |y|^2 q(1,y | t,x) \diff y.
\end{align*}
Since $\mathbb E_p[|X|^3] <\infty$, it yields
\begin{equation*}
   \lim_{t \downarrow 0}  \int_{\mathbb R^d} |y|^3 q(1, y | t, x) \diff y = \int_{\mathbb R^d} |y|^3 \lim_{t \downarrow 0} q(1, y | t, x) \diff y =\mathbb E_p \left[|X|^3 \right] < + \infty.
\end{equation*}
Furthermore, we have
\begin{align*}
       \lim_{t \downarrow 0} \frac{\partial_{t} q_t(x)} {q_t(x)} S(t, x) = - x x^{\top} \E_{p} [X], \quad \lim_{t \downarrow 0} \frac{\nabla[\partial_{t} q_t(x)]} {q_t(x)}
    &= \E_{p} [X] - x x^{\top} \E_{p} [X].
\end{align*}
Therefore, it yields $\lim_{t \downarrow 0} V(t, x) = \E_{p} [X]$, which completes the proof.
\end{proof}

\subsection{Cram{\'e}r-Rao inequality}
In order to obtain a lower bound of the Jacobian matrix of velocity field $V$ defined in (\ref{eq:vector-field}), we apply the classical Cram{\'e}r-Rao bound \cite{rao1945information, cramer1946mathematical} in statistical parameter estimation to a special case for location parameter estimation. This particular application is far from being new in information theory and convex geometry (for example, see \cite{dembo1991information, lutwak2002cramer, cianchi2013unified, saumard2014log}). We include it here for the sake of completeness. 
A lower bound on the covariance matrix of a prescribed probability measure directly follows the Cram{\'e}r-Rao bound \cite[Theorem 11.10.1]{cover2005chapter11}.

\begin{lemma}[Cram{\'e}r-Rao bound]
\label{lem:cr-ineq}
Let $\mu_{\theta}(\diff x) = f_{\theta}(x) \diff x$ be a probability measure on $\sR^d$ such that the density $f_{\theta}(x)$ is of class $C^2$ with respect to an unknown parameter $\theta \in \Theta$. Assume that a few mild regularity assumptions hold. Then provided that $\{ X_i \}_{i=1}^n$ are i.i.d. samples from $\mu_{\theta}$ with size $n$,
the mean-squared error of any unbiased estimator $g(X_1, X_2, \cdots, X_n)$ for the parameter $\theta$ is lower bounded by the inverse of the Fisher information matrix:
\begin{align*}
     \E_{\mu_{\theta}} \left[ (g(X_1, X_2, \cdots, X_n) - \theta)^{\otimes 2} \right] \succeq \left( - n \E_{\mu_{\theta}} \left[ \frac{\partial^2} {\partial \theta^2} \log f_{\theta}(X_1) \right] \right)^{-1}.
\end{align*}
\end{lemma}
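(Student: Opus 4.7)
My plan is to follow the classical derivation of the multivariate Cram\'er-Rao bound, viewing it as a consequence of Cauchy-Schwarz (or equivalently, Schur complement positivity) applied to the joint covariance of the estimator and the score. The required regularity conditions (interchange of differentiation and integration, finite second moments of the score, invertibility of the Fisher information matrix) will be absorbed into the ``mild regularity assumptions.''

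First, I would introduce the per-sample score $s(x;\theta) := \nabla_\theta \log f_\theta(x)$ and verify the two standard identities:
\begin{equation*}
    \E_{\mu_\theta}[s(X_1;\theta)] = 0, \qquad \E_{\mu_\theta}[s(X_1;\theta) s(X_1;\theta)^\top] = -\E_{\mu_\theta}\!\left[\tfrac{\partial^2}{\partial\theta^2}\log f_\theta(X_1)\right] =: J(\theta).
\end{equation*}
Both follow from differentiating $\int f_\theta \, \diff x = 1$ once and twice under the integral sign, using the $C^2$ hypothesis on $f_\theta$. By independence, the joint score $S := \sum_{i=1}^n s(X_i;\theta)$ satisfies $\E_{\mu_\theta}[S] = 0$ and $\E_{\mu_\theta}[S S^\top] = n J(\theta)$.

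Second, I would exploit the unbiasedness of $g$. Differentiating the identity $\E_{\mu_\theta}[g(X_1,\ldots,X_n)] = \theta$ componentwise under the product integral yields the key cross-moment identity
\begin{equation*}
    \E_{\mu_\theta}\!\left[(g(X_1,\ldots,X_n)-\theta)\, S^\top\right] = \mathbf{I}_d.
\end{equation*}
Writing $U := g(X_1,\ldots,X_n) - \theta$, the block matrix
\begin{equation*}
    \begin{pmatrix} \E[UU^\top] & \E[US^\top] \\ \E[SU^\top] & \E[SS^\top]\end{pmatrix}
    = \begin{pmatrix} \E[UU^\top] & \mathbf{I}_d \\ \mathbf{I}_d & nJ(\theta)\end{pmatrix}
\end{equation*}
is positive semidefinite as the covariance of the stacked random vector $(U,S)$. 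Assuming $J(\theta) \succ 0$, the Schur complement condition gives
\begin{equation*}
    \E[UU^\top] \succeq \mathbf{I}_d\,(nJ(\theta))^{-1}\,\mathbf{I}_d = (nJ(\theta))^{-1},
\end{equation*}
which is precisely the claimed bound.

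The main subtlety, and the only place one has to be a little careful, is the differentiation-under-the-integral step that produces both the vanishing expectation of the score and the cross-moment identity; this is where the unstated ``mild regularity assumptions'' are really used (dominated convergence, or $C^2$ with integrable dominating function, plus sufficient decay of $f_\theta$). Given that this lemma is invoked in the paper merely as a black box, I would state these assumptions in passing and then present the two-line computation above. The rest is algebra.
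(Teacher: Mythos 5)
Your proof is correct and is the standard score-function/Schur-complement derivation of the multivariate Cram\'er--Rao bound. The paper does not actually prove Lemma~\ref{lem:cr-ineq}: it states it as a known result and cites \cite[Theorem~11.10.1]{cover2005chapter11}, whose argument is exactly the one you reproduce, so there is no distinct in-paper proof to compare against; you have simply filled in the black box. The only points worth making explicit in a careful write-up: unbiasedness gives $\E_{\mu_\theta}[U]=0$, so the $2d\times 2d$ block matrix you display really is the covariance of the stacked vector $(U,S)$ and hence automatically positive semidefinite; and the invertibility $J(\theta)\succ 0$ that you invoke for the Schur-complement step, together with the interchange of differentiation and integration used in both score identities and in $\tfrac{\diff}{\diff\theta}\,\E_{\mu_\theta}[g]=\mathbf{I}_d$, is precisely the content of the ``mild regularity assumptions'' the lemma leaves implicit --- you already flag this correctly.
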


We consider the example of location parameter estimation. Suppose $\theta$ is the location parameter and let $f_{\theta} (x) = f(x - \theta)$ and $g(x) = x$. Specifically, it yields a lower bound on the covariance matrix of the probability measure $\mu_{\theta}$ in the case that $\theta = \E_{\mu_{\theta}} [X]$, i.e., a random sample $X \sim \mu_{\theta}$ is an unbiased estimator of the mean $\theta$. Apart from this implication, an alternative proof of the same lower bound on the covariance matrix is presented in \cite{chewi2022entropic}. It is worth noting that a compactly supported probability measure $\mu$ would suffice to ensure the Cram{\'e}r-Rao inequality holds.

\begin{lemma}
\label{lem-cramer-rao}
Let $\mu(\diff x) = \exp(-U(x)) \diff x$ be a probability measure on $\sR^d$ such that $U$ of class $C^2$ on the interior of its domain. Suppose $X$ is a random sample from $\mu$. Then the covariance matrix is lower bounded as
$\mathrm{Cov}_{\mu} (X) \succeq \left( \mathbb E_{\mu} \left[ \nabla^2 U(X) \right] \right)^{-1}$.
\end{lemma}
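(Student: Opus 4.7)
The plan is to reduce the claim to a direct application of Lemma \ref{lem:cr-ineq} in the location-parameter setting sketched in the paragraph preceding the lemma.

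First I would translate so that $\mu$ has mean zero without loss of generality: set $\theta_0 := \mathbb E_\mu[X]$ and $V(y) := U(y + \theta_0)$, so that $\tilde\mu(\diff y) := \exp(-V(y)) \diff y$ is the law of $Y := X - \theta_0$. Both sides of the desired inequality are invariant under this translation, since $\Cov_\mu(X) = \Cov_{\tilde\mu}(Y)$ and $\mathbb E_\mu[\nabla^2 U(X)] = \mathbb E_{\tilde\mu}[\nabla^2 V(Y)]$, so it suffices to prove the bound for $\tilde\mu$, which has mean zero.

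Next I would introduce the location family $f_\theta(y) := \exp(-V(y - \theta))$ for $\theta \in \sR^d$, with induced measure $\mu_\theta(\diff y) := f_\theta(y) \diff y$. Under $\mu_\theta$ the random variable $Y$ has mean $\theta$, so the single-sample estimator $g(Y) = Y$ is unbiased for $\theta$. A direct computation gives the score $\nabla_\theta \log f_\theta(y) = \nabla V(y - \theta)$ and Hessian $\nabla_\theta^2 \log f_\theta(y) = -\nabla^2 V(y - \theta)$, so the Fisher information at $\theta = 0$ equals $I(0) = \mathbb E_{\tilde\mu}[\nabla^2 V(Y)]$. Applying Lemma \ref{lem:cr-ineq} with $n = 1$ at $\theta = 0$ then yields $\Cov_{\tilde\mu}(Y) \succeq I(0)^{-1} = (\mathbb E_{\tilde\mu}[\nabla^2 V(Y)])^{-1}$, and undoing the translation delivers the assertion for $\mu$.

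The main obstacle I expect is verifying the ``mild regularity assumptions'' in Lemma \ref{lem:cr-ineq}, namely that $\nabla_\theta$ may be exchanged with the integral in the unbiasedness identity $\int y f_\theta(y) \diff y = \theta$ and in the score--Fisher identity $I(\theta) = -\mathbb E_{\mu_\theta}[\nabla_\theta^2 \log f_\theta]$. In the compactly supported case emphasized by the remark after the lemma, this requires controlling boundary contributions from integration by parts; one clean route is to smoothly regularize $V$ away from $\partial(\mathrm{supp}\,\mu)$, apply the smooth full-support case, and pass to the limit. Alternatively, one may bypass Lemma \ref{lem:cr-ineq} entirely and prove the bound directly by Cauchy--Schwarz, using the integration-by-parts identities $\mathbb E_{\tilde\mu}[Y \otimes \nabla V(Y)] = \mathbf{I}_d$ and $\mathbb E_{\tilde\mu}[\nabla V(Y) \nabla V(Y)^\top] = \mathbb E_{\tilde\mu}[\nabla^2 V(Y)]$ to obtain $v^\top \Cov_{\tilde\mu}(Y) v \ge v^\top (\mathbb E_{\tilde\mu}[\nabla^2 V(Y)])^{-1} v$ for every $v \in \sR^d$.
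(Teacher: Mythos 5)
Your proposal is correct and follows exactly the route the paper sketches: specialize Lemma \ref{lem:cr-ineq} to the location family $f_\theta(x) = f(x-\theta)$ with the identity estimator $g(X)=X$, which is unbiased once the measure is centered. The paper merely points at this specialization without spelling out the translation or the Fisher-information computation, and the alternative Cauchy--Schwarz argument you mention in passing is precisely the one the paper attributes to \cite{chewi2022entropic}, so your write-up is a faithful (and slightly more careful) elaboration of the intended proof rather than a genuinely different one.
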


%%%%%%%%%%%%%%%%%%%%%%%%%%%%%%%%%%%%%%%%%%%%%%%%%%%%%%%%%%%%%%%%
\subsection{Proof of Propositions  \ref{prop:vp-ode-eps}}

\begin{proof}
By It{\^ o} SDE defined in (\ref{eq:vp-sde-tr}), we have the distribution $\overline{X}_t$, which is given by 
\begin{equation} \label{eq-sp-x}
\overline{X}_t|\overline{X}_0=x_0 \sim N((1-t)x_0, t(2-t) \mathbf{I}_d).
\end{equation}
Due to the Cauchy-Lipschitz theory \cite[Section 2]{ambrosio2014continuity}, the push-forward map $X^{*}_t$ and process $\overline{X}_t$ have the same distribution for any $t\in [0,1-\varepsilon]$. Then by (\ref{eq-sp-x}), we obtain
\begin{equation*}
X^{*}_t \overset{d}{=} \overline{X}_{t} \overset{d}{=} (1-t) X + \sqrt{t(2-t)} Y
\end{equation*}
with $X \sim \nu, Y \sim \gamma_d$. Recall that $X^{*}_{1 - \varepsilon}$ and $\varepsilon X + \sqrt{1-\varepsilon^2} Y$ have the same distribution. Therefore, by the definition of $W_2$ and Cauchy-Schwarz's inequality, it yields
\begin{align*}
 W^2_2 (\nu \circ ({X^{*}_{1 - \varepsilon}})^{-1}, \gamma_d) 
& \leq \int_{\mathbb R^d \times \mathbb R^d} |\varepsilon x +(\sqrt{1-\varepsilon^2} -1) y |^2 p(x) \phi(y) \diff x \diff y  \notag \\
& \leq  2 \varepsilon^2 \int_{\mathbb R^d} |x|^2 p(x) \diff x + 2 \left(\sqrt{1-\varepsilon^2} -1 \right)^2 \int_{\mathbb R^d} |y|^2 \phi(y) \diff y  \notag \\
&= 2 \varepsilon^2 \, \mathbb E_p[|X|^2] + 2d\left(\sqrt{1-\varepsilon^2} -1 \right)^2.
\end{align*}
Let $\varepsilon \rightarrow 0$, and it yields
$\lim_{\varepsilon \to 0} W_2(\nu \circ ({X^{*}_{1 - \varepsilon}})^{-1}, \gamma_d) = 0$, which completes the proof.
\end{proof}

%%%%%%%%%%%%%%%%%%%%%%%%%%%%%%%%%%%%%%%%%%%%%%%%%%%%%%%%%%%%%%%%%%%

\section{Proof of Theorem \ref{main-thm2} and Theorem \ref{main-thm3}}

\subsection{Bound on the Lipschitz constant of the flow map}

We take a close look at Lipschitz properties of the F{\"o}llmer flow \paref{main-ode}.
In order to derive functional inequalities, we deploy the approach of Lipschitz changes of variables from the Gaussian measure $\gamma_d$ to the target measure $\nu$.

One key argument is to bound the maximum eigenvalue of the Jacobian matrix of velocity field denoted as $\lambda_{\max} (\nabla V(t, x))$.
By integrating both sides of \paref{main-ode} w.r.t. time $s \in [0, t]$, we have
\begin{equation}
    \label{eq:vp-ode-integral}
    X_t(x) - X_0(x) = \int_0^t V(s, X_s(x)) \diff s, \quad X_0(x)=x.
\end{equation}
Taking the first-order derivative w.r.t $x$ on both sides of (\ref{eq:vp-ode-integral}), we get
\begin{equation}
    \label{eq:vp-map-gradx}
    \nabla X_t(x) - \nabla X_0(x) = \int_0^t \nabla  V(s, X_s(x)) \nabla X_s(x) \diff s.
\end{equation}
Taking the first-order derivative w.r.t $t$ on both sides of (\ref{eq:vp-map-gradx}), we get
\begin{equation}
    \label{eq:vp-map-gradx-gradt}
    \frac{\partial} {\partial t} \nabla X_t(x) = \nabla V(t, X_t(x)) \nabla X_t(x).
\end{equation}
Let $a_t = | \nabla X_t(x)  r |^2 $ with $|r|=1$. Assume $\lambda_{\max} (\nabla V(t, x)) \le \theta_t$. By (\ref{eq:vp-map-gradx-gradt}) we get 
\begin{align*}
    \frac{\partial} {\partial t} a_t  = 2 \left<(\nabla X_t)r, \frac{\partial}{\partial t} (\nabla X_t) r \right> 
     = 2 \left<(\nabla X_t)r, \left(\nabla V(t, X_t) \nabla X_t\right) r \right>  \leq 2\theta_t a_t.
\end{align*}
The above display and Gr{\"o}nwall's inequality imply
\begin{align*}
\|\nabla X_t (x) \|_{\mathrm{op}}=\sup_{\|r \|_2=1} \sqrt{a_t} 
 \le \sup_{\|r \|_2=1} \sqrt{a_0} \exp \left( \int_0^t \theta_s \diff s \right)  = \exp \left( \int_0^t \theta_s \diff s \right).
\end{align*}
Let $t = 1$, then we get
\begin{equation}
    \label{eq:vp-map-lips}
    \mathrm{Lip}(X_1(x)) \le \|\nabla X_1 (x) \|_{\mathrm{op}} \le \exp \left( \int_0^1 \theta_s \diff s \right).
\end{equation}

%%%%%%%%%%%%%%%%%%%%%%%%%%%%%%%%%%%%%%%%%%%%%%%%%%%%%%%%%%%%%%%%%%%%%%%%%%%%
\subsection{Lipschitz properties of transport maps}

In this subsection, we show that the considered flow map is Lipschitz in various settings. The following is the main result of this subsection and it covers the Lipschitz statements of Theorem \ref{main-thm2} and Theorem \ref{main-thm3}.

\begin{theorem} \label{section-5-thm2}
\begin{itemize}
    \item[(1)] Suppose that either $p$ is $\kappa$-semi-log-concave for some $\kappa >0$, or $p$ is $\kappa$-semi-log-concave for some $\kappa \in \mathbb R$ and $D <+ \infty$. Then the F{\"o}llmer flow \paref{main-ode} has a unique solution for all $t\in [0,1]$. Furthermore,
\begin{itemize}
    \item[(a)] If $\kappa D^2 \ge 1 $, then $X_1(x)$ is a Lipschitz mapping with constant $\tfrac{1}{\sqrt{\kappa}}$, or equivalently,
    \begin{equation*}
    \|\nabla X_1(x) \|^2_{\mathrm{op}} \le \frac{1}{\kappa}, \quad \forall x \in \mathbb{R}^d.
    \end{equation*}

    \item[(b)] If $\kappa D^2 <1 $, then $X_1(x)$ is a Lipschitz mapping with constant $\exp\left(\frac{1-\kappa D^2}{2} \right)D $, or equivalently,
    \begin{equation*}
    \|\nabla X_1(x) \|^2_{\mathrm{op}} \le \exp\left( 1-\kappa D^2 \right)D^2, \quad \forall x \in \mathbb{R}^d.
    \end{equation*}
\end{itemize}

\item[(2)] Fix a probability measure $\rho$ on $\mathbb R^d $ supported on a ball of radius $R$ and let $p :=N(0,\sigma^2 \mathbf{I}_d) * \rho $. Then the F{\"o}llmer flow \paref{main-ode} has a unique solution for all $t\in [0,1]$. Furthermore, $X_1(x)$ is a Lipschitz mapping with constant $\sigma \exp \left( \frac{R^2}{2 \sigma^2} \right)$, or equivalently,
\begin{equation*}
    \|\nabla X_1(x) \|^2_{\mathrm{op}} \le \sigma^2 \exp\left( \frac{R^2}{\sigma^2} \right), \quad \forall x \in \mathbb{R}^d.
    \end{equation*}
\end{itemize}
\end{theorem}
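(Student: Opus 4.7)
The strategy is to use the template bound \eqref{eq:vp-map-lips}, which already reduces the theorem to finding a pointwise upper estimate $\theta_t \geq \lambda_{\max}(\nabla V(t,x))$ that is independent of $x$, and then computing $\int_0^1 \theta_t\,\diff t$. My plan has three steps.

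\emph{Step 1: a Tweedie-type identity for $\nabla V$.} Introduce the auxiliary random variable $Z_t := tY + \sqrt{1-t^2}\,W$, where $Y\sim\nu$ and $W\sim\gamma_d$ are independent. The density $q_t$ of Section~\ref{sec:follmer-flow} is then exactly the density of $Z_t$, so differentiating $\log q_t$ under the integral sign yields
\[
V(t,x) \,=\, \frac{\E[Y\mid Z_t=x]-tx}{1-t^2},
\qquad
\nabla V(t,x) \,=\, \frac{t}{(1-t^2)^2}\bigl(\Cov(Y\mid Z_t=x)-(1-t^2)\mathbf{I}_d\bigr),
\]
the second equality being the Gaussian Stein identity $\nabla_x\E[Y\mid Z_t=x]=\frac{t}{1-t^2}\Cov(Y\mid Z_t=x)$. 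In particular $\nabla V$ is symmetric, and the whole problem is reduced to controlling the largest eigenvalue of the conditional covariance of $Y$ given $Z_t$.

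\emph{Step 2: two bounds on the conditional covariance.} The conditional density of $Y$ given $Z_t=x$ is proportional to $\exp(-U(y))\exp(-|x-ty|^2/(2(1-t^2)))$; under Assumptions \ref{condn:semi-log-convex}--\ref{condn:semi-log-concave} its negative log-density has Hessian at least $(\kappa+t^2/(1-t^2))\mathbf{I}_d$, and whenever this lower bound is positive the Brascamp-Lieb inequality gives
\[
\Cov(Y\mid Z_t=x) \,\preceq\, \frac{1-t^2}{\kappa(1-t^2)+t^2}\,\mathbf{I}_d.
\]
Independently, because $\mathrm{supp}(\nu)$ has diameter $\sqrt{2}D$, the standard variance bound for bounded random vectors furnishes $\Cov(Y\mid Z_t=x)\preceq D^2\mathbf{I}_d$. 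For the Gaussian-mixture case $\nu=N(0,\sigma^2\mathbf{I}_d)*\rho$ I would write $Y=U+\sigma W'$ with $U\sim\rho$ supported in a ball of radius $R$ and $W'\sim\gamma_d$ independent of $W$ and $U$, and apply the law of total covariance together with $\Cov(U\mid Z_t=x)\preceq R^2\mathbf{I}_d$ to obtain
\[
\Cov(Y\mid Z_t=x) \,\preceq\, \frac{\sigma^2(1-t^2)}{\sigma^2 t^2+1-t^2}\,\mathbf{I}_d \,+\, \biggl(\frac{1-t^2}{\sigma^2 t^2+1-t^2}\biggr)^{\!2} R^2\,\mathbf{I}_d.
\]

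\emph{Step 3: integration and matching.} When $\kappa D^2\ge 1$ the Brascamp-Lieb bound alone is used throughout $[0,1]$, and the substitution $u=\kappa(1-t^2)+t^2$ makes $\int_0^1\theta_t\,\diff t\le -\tfrac{1}{2}\log\kappa$ explicit, yielding the Lipschitz constant $1/\sqrt{\kappa}$. When $\kappa D^2<1$, the Brascamp-Lieb bound exceeds $D^2$ near $t=0$ (and is vacuous below a threshold when $\kappa\le 0$), so I glue the two estimates at the crossover time $t^\ast$ defined by $D^2[\kappa(1-t^{\ast 2})+t^{\ast 2}]=1-t^{\ast 2}$: on $[0,t^\ast]$ the support bound gives the antiderivative $\tfrac{D^2}{2(1-t^2)}+\tfrac{1}{2}\log(1-t^2)$, and on $[t^\ast,1]$ Brascamp-Lieb gives the antiderivative $\tfrac{1}{2}\log(\kappa(1-t^2)+t^2)$. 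The defining relation of $t^\ast$ is precisely what makes the logarithmic boundary terms at $t^\ast$ cancel, after which the totals collapse to $\log D+(1-\kappa D^2)/2$, giving the constant $D\exp\!\bigl((1-\kappa D^2)/2\bigr)$. For the Gaussian-mixture claim, the single substitution $u=\sigma^2 t^2+1-t^2$ integrates both terms of the covariance bound in closed form, producing $\log\sigma+R^2/(2\sigma^2)$ and the constant $\sigma\exp(R^2/(2\sigma^2))$. The main obstacle I expect is this matching step in the $\kappa D^2<1$ regime: each of the two crude covariance bounds is routine to derive individually, but the cancellation that converts them into the clean constant $D\exp((1-\kappa D^2)/2)$ depends on using the exact defining equation of $t^\ast$, and this is the single place where Brascamp-Lieb and the support bound must be combined rather than applied separately.
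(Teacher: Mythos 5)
Your proposal is correct and follows essentially the same route as the paper: your $\Cov(Y\mid Z_t=x)$ is exactly the covariance $\Cov(p^{tx,1-t^2})$ of the measure defined in (\ref{eq:measure-cov-vp}), your Tweedie/Stein formula for $\nabla V$ reproduces Lemma~\ref{lem-cov}, your two covariance bounds (Brascamp--Lieb and the support bound, plus the law-of-total-covariance argument for the convolution case) are Theorem~\ref{thm1}, and your crossover time $t^\ast$ is the paper's $t_0$, with the same integration and cancellation as in Corollaries~\ref{cor1} and~\ref{cor2}. The only thing worth flagging is that for $\kappa\le 0$ you correctly observe that the Brascamp--Lieb bound is vacuous below a threshold, but you should explicitly note (as Corollary~\ref{cor2} does) that this threshold $\sqrt{\kappa/(\kappa-1)}$ is strictly below $t^\ast$, so the gluing is legitimate; and in the Gaussian-mixture integral the substitution $u=\sigma^2t^2+1-t^2$ degenerates when $\sigma=1$, so that edge case needs a one-line separate check.
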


\begin{proof}
Combine Theorem \ref{thm1}-(4) and Corollaries \ref{cor1}-\ref{cor2} and then complete the proof.
\end{proof}

In fact, the existence of a solution to the IVP (\ref{main-ode}) also relies on controlling $\nabla V$. To this end, we represent $\nabla V$ as a covariance matrix. We start by defining a measure $p^{tx, 1-t^2}$ on $\sR^d$, for fixed $t \in [0, 1)$ and $x \in \sR^d$, by
\begin{align}
    \label{eq:measure-cov-vp}
    p^{tx, 1-t^2} (y)
    :=& \frac{ \varphi^{tx, 1-t^2} (y) r (y) } {\EuScript{Q}_{1-t} r (x)}, \quad r := \frac{\diff \nu} {\diff \gamma_d} = \frac{p} {\phi},
\end{align}
where $\varphi^{tx, 1-t^2} (y)$ is the density of the $d$-dimensional Gaussian measure with mean $ tx$ and covariance $(1-t^2)\mathbf{I}_d $ and
\begin{align}\label{eq:vp-vf-op-form}
    \EuScript{Q}_{1-t} r (x) 
     = \int_{\mathbb R^d} \varphi^{tx, 1-t^2} (y) r(y) \diff y = \int_{\mathbb{R}^d} \phi (z) r (tx + \sqrt{1 - t^2}z) \diff z.
\end{align}
Notice that
\begin{align*}
  \EuScript{Q}_{1-t} r (x)= (2\pi)^{d/2} \exp\left(\frac{ |x |^2}{2} \right) \frac{1}{(2\pi(1-t^2))^{d/2}} \int_{\mathbb R^d} p(y)\exp\left(-\frac{|x-ty |^2 }{2(1-t^2)} \right) \diff y.
\end{align*}
Hence, we obtain
\begin{equation*}
 V(t, x) =\frac{x + S(t, x)}{t} = \frac{\nabla \log \EuScript{Q}_{1-t} r (x)}{t}, \quad 0< t \le 1.
\end{equation*}

\begin{lemma}\label{lem-cov}
Suppose velocity field $V$ is defined in (\ref{eq:vector-field}), then
\begin{equation}
    \label{eq:cov-vp-vf}
    \nabla V(t, x) = \frac{t} {(1 - t^2)^2} \Cov \left( p^{tx, 1-t^2} \right) - \frac{t} {1 - t^2} \rmI_d, \quad \forall t\in (0,1), \quad \nabla  V(0, x) =0.
\end{equation}
\end{lemma}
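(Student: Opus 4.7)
The plan is to recognize that $V(t,x) = \frac{1}{t}\nabla\log\EuScript{Q}_{1-t}r(x)$ and therefore $\nabla V(t,x) = \frac{1}{t}\nabla^2\log\EuScript{Q}_{1-t}r(x)$ for $t\in(0,1)$, so the task reduces to computing the Hessian of the log of the integral $f(x) := \EuScript{Q}_{1-t}r(x) = \int_{\sR^d} \varphi^{tx,1-t^2}(y)\,r(y)\,\diff y$ via the standard identity
\begin{equation*}
\nabla^2\log f(x) \;=\; \frac{\nabla^2 f(x)}{f(x)} \;-\; \frac{\nabla f(x)\,\nabla f(x)^{\top}}{f(x)^2}.
\end{equation*}
The key observation is that both $\nabla f(x)/f(x)$ and $\nabla^2 f(x)/f(x)$ can be rewritten as expectations under the tilted measure $p^{tx,1-t^2}$ defined in (\ref{eq:measure-cov-vp}), so the two terms above will assemble into the covariance of $p^{tx,1-t^2}$.

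The main computation I would carry out is differentiating the Gaussian kernel $\varphi^{tx,1-t^2}(y)$ with respect to $x$. A direct calculation gives
\begin{equation*}
\nabla_x\,\varphi^{tx,1-t^2}(y) \;=\; \varphi^{tx,1-t^2}(y)\cdot\frac{t(y-tx)}{1-t^2},
\end{equation*}
so differentiation under the integral sign (justified by the local integrability of $r$ against the Gaussian weight, together with dominated convergence, which follows from Assumption~\ref{condn:well-defined}) yields
\begin{equation*}
\frac{\nabla f(x)}{f(x)} \;=\; \frac{t}{1-t^2}\,\mathbb{E}_{p^{tx,1-t^2}}[Y-tx].
\end{equation*}
Differentiating once more with the product rule produces
\begin{equation*}
\frac{\nabla^2 f(x)}{f(x)} \;=\; \frac{t^2}{(1-t^2)^2}\,\mathbb{E}_{p^{tx,1-t^2}}\!\bigl[(Y-tx)(Y-tx)^{\top}\bigr] \;-\; \frac{t^2}{1-t^2}\,\rmI_d.
\end{equation*}
Subtracting the outer product $\frac{t^2}{(1-t^2)^2}\,\mathbb{E}[Y-tx]\,\mathbb{E}[Y-tx]^{\top}$ (both expectations under $p^{tx,1-t^2}$) collapses the second-moment tensor into the covariance $\Cov(p^{tx,1-t^2})$, and dividing by $t$ reproduces the claimed formula.

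For the boundary case $t=0$, since $V(0,x) := \mathbb{E}_{\nu}[X]$ is a constant vector by Definition~\ref{def:follmer-flow}, one trivially has $\nabla V(0,x)=0$; consistency with the interior formula follows because the factor $t$ in front of both terms $\frac{t}{(1-t^2)^2}\Cov(p^{tx,1-t^2})$ and $\frac{t}{1-t^2}\rmI_d$ forces the expression to vanish as $t\downarrow 0$, provided the covariance stays bounded in a neighborhood of $t=0$. The only genuine obstacle is justifying the two successive differentiations under the integral on $(0,1)$ uniformly on compact sets in $x$; this is routine under the semi-log-convexity and moment assumptions on $\nu$, since the Gaussian weight $\varphi^{tx,1-t^2}(y)$ provides super-polynomial decay in $y$ for fixed $t<1$, dominating any polynomial factor arising from differentiation.
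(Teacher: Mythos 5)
Your proposal is correct and follows essentially the same route as the paper: differentiate the Gaussian kernel twice under the integral, apply the identity $\nabla^2 \log f = \nabla^2 f/f - (\nabla f/f)^{\otimes 2}$, interpret the resulting terms as first and second moments of the tilted measure $p^{tx,1-t^2}$, and assemble them into the covariance before dividing by $t$. The only cosmetic difference is that you explicitly note that centering by $tx$ leaves the covariance unchanged and you remark on the $t\downarrow 0$ consistency, neither of which alters the substance of the argument.
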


%\noindent
%\textbf{Claim.}

\begin{proof}
By taking the first-order and the second-order derivatives on both sides in (\ref{eq:vp-vf-op-form}), we get
\begin{align*}
    \nabla \EuScript{Q}_{1-t} r (x) &= \frac{t} {1 - t^2} \int_{\mathbb R^d} (y - tx) \varphi^{tx, 1-t^2} (y) r(y) \diff y, \\
    \nabla^2 \EuScript{Q}_{1-t} r (x) &= \frac{t^2} { (1 - t^2)^2 } \int_{\mathbb R^d} (y - tx)^{\otimes 2} \varphi^{tx, 1-t^2} (y) r(y) \diff y - \left( \frac{t^2} {1 - t^2} \int_{\mathbb R^d} \varphi^{tx, 1-t^2} (y) r(y) \diff y \right) \rmI_d.
\end{align*}
Then we obtain
\begin{align*}
     & \nabla^2 \log \EuScript{Q}_{1-t} r (x) \\
    =&\frac{ \nabla^2 \EuScript{Q}_{1-t} r (x) } { \EuScript{Q}_{1-t} r (x) } - \left( \frac{ \nabla \EuScript{Q}_{1-t} r (x) } { \EuScript{Q}_{1-t} r (x) } \right)^{\otimes 2} \\
    =& \frac{t^2} { (1 - t^2)^2 } \left[ \int_{\mathbb R^d} (y - tx)^{\otimes 2} p^{tx, 1-t^2} (y) \diff y - \left( \int_{\mathbb R^d} (y - tx) p^{tx, 1-t^2} (y) \diff y \right)^{\otimes 2} \right] - \frac{t^2} {1 - t^2} \rmI_d \\
    =& \frac{t^2} { (1 - t^2)^2 } \left[ \int_{\mathbb R^d} y^{\otimes 2} p^{tx, 1-t^2} (y) \diff y - \left( \int_{\mathbb R^d} y p^{tx, 1-t^2} (y) \diff y \right)^{\otimes 2} \right] - \frac{t^2} {1 - t^2} \rmI_d\\
    =& \frac{t^2} { (1 - t^2)^2 } \Cov (p^{tx, 1-t^2} ) - \frac{t^2} {1 - t^2} \rmI_d.
\end{align*}
Therefore, we get
\begin{equation}
    \label{eq:cov-vp-score}
    \nabla V(t, x) = \frac{t} {( 1 - t^2 )^2} \Cov ( p^{tx, 1-t^2} ) - \frac{t} {1 - t^2} \rmI_d.
\end{equation}
% The estimate \eqref{eq:vp-vf-lbd} follows immediately from \eqref{eq:cov-vp-vf}.
This completes the proof.
\end{proof}

Next, we use the representation of (\ref{eq:cov-vp-vf}) to estimate the upper bound of $\nabla V(t, x)$.
\begin{theorem}\label{thm1}
Let $p$ be a probability measure on $\mathbb R^d$ with $D := (1/\sqrt{2} ) \mathrm{diam} (\mathrm{supp} (p))$.
\begin{itemize}
    \item[(1)] For every $t \in [0, 1)$,
        \begin{equation}
            \label{eq:vp-vf-ubd-bounded}
        \frac{t}{1-t^2} \mathbf{I}_d \preceq  \nabla V(t, x) \preceq \left( \frac{t D^2} {(1 - t^2)^2} - \frac{t} {1 - t^2} \right) \rmI_d.
        \end{equation}

    \item[(2)] Suppose that $p$ is $\beta$-semi-log-convex with $\beta \in (0, +\infty)$. Then for any $t\in [0,1]$,
   \begin{equation}
       \nabla V(t, x) \succeq \frac{t (1 - \beta)} {\beta (1 - t^2) + t^2} \rmI_d.
   \end{equation}
     In particular, when $p \sim N\left(0, \frac{1}{\beta} \mathbf{I}_d \right)$, then 
   \begin{equation*}
       \nabla V(t, x) =\frac{t (1 - \beta)} {\beta (1 - t^2) + t^2} \rmI_d.
   \end{equation*}
    
    \item[(3)] Let $\kappa \in \sR$ and suppose that $p$ is $\kappa$-semi-log-concave. Then for any $t \in \left[ \sqrt{ \frac{\kappa} {\kappa - 1} \Id_{\kappa < 0} }, 1 \right]$,
        \begin{equation}
            \label{eq:vp-vf-ubd-log-concave}
            \nabla V(t, x) \preceq \frac{t (1 - \kappa)} {\kappa (1 - t^2) + t^2} \rmI_d.
        \end{equation}

    \item[(4)] Fix a probability measure $\rho$ on $\sR^d$ supported on a ball of radius $R$ and let $p := N(0,\sigma^2 \mathbf{I}_d) * \rho$ with $\sigma > 0$. Then for any $t \in [0, 1]$,
    \begin{align}\label{eq:vp-vf-ubd-convolution}
        \frac{(\sigma^2-1) t}{1+ (\sigma^2-1)t^2} \mathbf{I}_d   \preceq  \nabla V(t, x)  \preceq t \left\{ \frac{ (\sigma^2 - 1) [ 1 + (\sigma^2 - 1) t^2 ] + R^2 } { [ 1 + (\sigma^2 - 1) t^2 ]^2 } \right\} \rmI_d.
        \end{align}
\end{itemize}
\end{theorem}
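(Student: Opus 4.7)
The plan is to base everything on the covariance representation
\begin{equation*}
\nabla V(t,x) = \frac{t}{(1-t^2)^2}\Cov\bigl(p^{tx,1-t^2}\bigr) - \frac{t}{1-t^2}\mathbf{I}_d, \qquad t \in (0,1),
\end{equation*}
established in Lemma \ref{lem-cov}, so that each of the four statements reduces to a bound on the covariance matrix of the tilted measure $p^{tx,1-t^2}$. Writing $p = e^{-U}$ and expanding
\begin{equation*}
-\log p^{tx,1-t^2}(y) = \tfrac{|y-tx|^2}{2(1-t^2)} + U(y) - \tfrac{1}{2}|y|^2 + \mathrm{const},
\end{equation*}
one obtains the key identity $-\nabla_y^2 \log p^{tx,1-t^2}(y) = \nabla^2 U(y) + \tfrac{t^2}{1-t^2}\mathbf{I}_d$, which links the hypotheses on $U$ to covariance estimates for the tilted measure.

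For part (1), the support of $p^{tx,1-t^2}$ is contained in $\mathrm{supp}(p)$, whose diameter is $\sqrt{2}D$; the iid identity $\Cov(\mu) = \tfrac{1}{2}\mathbb{E}[(X-Y)^{\otimes 2}]$ for $X,Y \sim \mu$ yields $\Cov(p^{tx,1-t^2}) \preceq D^2 \mathbf{I}_d$, while $\Cov \succeq 0$ gives the opposite-side estimate. For part (2), $\nabla^2 U \preceq \beta \mathbf{I}_d$ gives the upper Hessian bound $-\nabla^2_y \log p^{tx,1-t^2} \preceq \tfrac{\beta(1-t^2)+t^2}{1-t^2}\mathbf{I}_d$, and the Cramér-Rao bound (Lemma \ref{lem-cramer-rao}) converts this into $\Cov(p^{tx,1-t^2}) \succeq \tfrac{1-t^2}{\beta(1-t^2)+t^2}\mathbf{I}_d$; substituting into the covariance representation and simplifying via $1 - \beta(1-t^2) - t^2 = (1-\beta)(1-t^2)$ produces the claimed lower bound. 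When $p \sim N(0,\beta^{-1}\mathbf{I}_d)$, $p^{tx,1-t^2}$ is itself Gaussian with an explicit covariance, so equality is a direct computation. Part (3) is symmetric: $\nabla^2 U \succeq \kappa \mathbf{I}_d$ makes $p^{tx,1-t^2}$ strongly log-concave with parameter $\lambda_t := \tfrac{\kappa(1-t^2)+t^2}{1-t^2}$, and $\lambda_t > 0$ is equivalent to $t^2 > \kappa/(\kappa-1)$ when $\kappa < 0$ (automatic when $\kappa \ge 0$), matching the stated range; the Brascamp-Lieb inequality then gives $\Cov(p^{tx,1-t^2}) \preceq \lambda_t^{-1}\mathbf{I}_d$ and the same algebra yields the upper bound.

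Part (4) does not mesh cleanly with the covariance representation of $p^{tx,1-t^2}$ and is where I expect the main obstacle. I would instead use the parallel identity $\nabla V(t,x) = t^{-1}\bigl(\mathbf{I}_d + \nabla^2 \log q_t(x)\bigr)$ coming from $\nabla \log \EuScript{Q}_{1-t}r(x) = x + \nabla \log q_t(x)$. Representing $p$ as the law of $Z + \sigma N$ with $Z \sim \rho$ and $N \sim \gamma_d$ independent, a sampling computation shows that $q_t$ is the density of $tZ + \alpha_t N'$, where $\alpha_t^2 := 1 + (\sigma^2-1)t^2$ and $N' \sim \gamma_d$ is independent of $Z$. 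A Tweedie-type computation then yields
\begin{equation*}
\nabla^2 \log q_t(x) = -\frac{1}{\alpha_t^2}\mathbf{I}_d + \frac{t^2}{\alpha_t^4}\Cov(Z \mid X = x), \qquad X := tZ + \alpha_t N'.
\end{equation*}
Since $\mathrm{supp}(Z \mid X = x)$ is contained in the ball of radius $R$, the posterior covariance satisfies $0 \preceq \Cov(Z \mid X = x) \preceq R^2 \mathbf{I}_d$; substituting these two-sided bounds into the expression for $\nabla V(t,x)$ and simplifying with $\alpha_t^2 - 1 = (\sigma^2-1)t^2$ recovers exactly (\ref{eq:vp-vf-ubd-convolution}). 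The main subtlety is to recognise that the convolution structure is best exploited at the level of $q_t$ rather than of $p^{tx,1-t^2}$, and to carry out the Tweedie variance decomposition so that the endpoint behaviours ($\nabla V(0,x) = 0$ and $\nabla V(1,x) = \mathbf{I}_d - \nabla^2 U(x)$) emerge consistently.
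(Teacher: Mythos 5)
Your proposal is correct, and for parts (1)--(3) it follows essentially the same route as the paper: Lemma \ref{lem-cov} plus the Hessian decomposition $-\nabla^2_y\log p^{tx,1-t^2}(y) = \nabla^2 U(y) + \tfrac{t^2}{1-t^2}\mathbf{I}_d$, then the Cram\'er--Rao bound for (2) and Brascamp--Lieb for (3), with the same algebra. The only stylistic difference is in part (1): you obtain $\Cov(p^{tx,1-t^2}) \preceq D^2\mathbf{I}_d$ from the symmetrization identity $\Cov(\mu) = \tfrac12\mathbb{E}[(X-Y)^{\otimes 2}]$, whereas the paper invokes Jung's theorem to inscribe $\mathrm{supp}(p)$ in a ball of radius at most $D$ and bounds $\Cov$ by the second moment about its center. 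Both give the constant $D^2$; your version is more elementary.

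Part (4) is where you genuinely diverge, but your stated motive — that the covariance representation of $p^{tx,1-t^2}$ ``does not mesh cleanly'' with the convolution structure — is mistaken: the paper stays entirely within that representation. It rewrites $p^{tx,1-t^2}$ as a Gaussian mixture via the Gaussian product formula, applies the law of total covariance to get
\begin{equation*}
\frac{\sigma^2(1-t^2)}{\alpha_t^2}\mathbf{I}_d \ \preceq\ \Cov\bigl(p^{tx,1-t^2}\bigr)\ \preceq\ \frac{\sigma^2(1-t^2)\alpha_t^2 + (1-t^2)^2R^2}{\alpha_t^4}\mathbf{I}_d, \qquad \alpha_t^2 := 1+(\sigma^2-1)t^2,
\end{equation*}
and plugs this into Lemma \ref{lem-cov} to obtain (\ref{eq:vp-vf-ubd-convolution}). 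Your alternative — pass to $q_t$, recognize it as the density of $tZ + \alpha_t N'$, and use the Tweedie second-moment identity $\nabla^2\log q_t(x) = -\alpha_t^{-2}\mathbf{I}_d + t^2\alpha_t^{-4}\Cov(Z\mid X=x)$ together with $0\preceq\Cov(Z\mid X=x)\preceq R^2\mathbf{I}_d$ — is also correct and reproduces (\ref{eq:vp-vf-ubd-convolution}) after the same simplification. The two are in fact equivalent via $\nabla^2\log q_t = \nabla^2\log\EuScript{Q}_{1-t}r - \mathbf{I}_d$; your route avoids the explicit Gaussian product computation and is arguably cleaner, but you should drop the claim that the paper's route is obstructed.

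One item worth flagging for both your write-up and the statement itself: with only $\Cov(p^{tx,1-t^2})\succeq 0$, Lemma \ref{lem-cov} yields $\nabla V(t,x) \succeq -\tfrac{t}{1-t^2}\mathbf{I}_d$, not $+\tfrac{t}{1-t^2}\mathbf{I}_d$ as printed in (\ref{eq:vp-vf-ubd-bounded}); a Dirac target already falsifies the positive-signed version. Your phrase ``the opposite-side estimate'' is consistent with the correct negative sign, matching what the paper's proof actually delivers.
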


\begin{proof}
The proof idea of this theorem follows similar arguments as in \cite[Lemma 3.3]{mikulincer2021brownian}.
\begin{itemize}
    \item[(1)] By \cite[Theorem 2.6]{danzer1963helly}, there exists a closed ball with radius less than $D := (1/\sqrt{2} ) \mathrm{diam} (\mathrm{supp} (p))$ that contains $\mathrm{supp} (p)$ in $\sR^d$.
    Then the desired bounds are a direct result of $0 \rmI_d \preceq \Cov (p^{tx, 1-t^2}) \preceq D^2 \rmI_d$ and (\ref{eq:cov-vp-vf}).

 \item[(2)] For any $t\in (0,1)$, recall that (\ref{eq:cov-vp-vf}) reads
\begin{equation} \label{eq-cov-v}
    \nabla V(t, x) = \frac{t} {(1 - t^2)^2} \Cov \left( p^{tx, 1-t^2} \right) - \frac{t} {1 - t^2} \rmI_d.
\end{equation}
On the one hand, let $p$ be $\beta$-semi-log-convex for some $\beta >0$. Then for any $t\in [0,1), p^{tx, 1-t^2}$ is $\left(\beta +\frac{t^2}{1-t^2} \right)$-semi-log-convex because
\begin{align*}
 -\nabla^2 \log \left( p^{tx,1-t^2}(y) \right) 
 =-\nabla^2 \log \left(r(y) \phi(y) \right) -\nabla^2 \log \left(\frac{\varphi^{tx,1-t^2}(y)}{\phi(y) } \right) 
 \preceq \left(\beta +\frac{t^2}{1-t^2} \right) \mathbf{I}_d
\end{align*}
where we use that $ p(y) = r(y) \phi(y)$. On the other hand, by Lemma \ref{lem-cramer-rao}, we obtain
\begin{equation*}
\mathrm{Cov}\left(p^{tx,1-t^2} \right) \succeq \left(\beta +\frac{t^2}{1-t^2} \right)^{-1} \rmI_d.
\end{equation*}
Furthermore, by (\ref{eq-cov-v}), we obtain
\begin{align*}
     \nabla V(t, x)  \succeq \left \lbrace \frac{t}{(1-t^2)^2} \left(\beta +\frac{t^2}{1-t^2} \right)^{-1} -\frac{t}{1-t^2} \right \rbrace \mathbf{I}_d 
     =\frac{t(1-\beta)}{\beta (1-t^2) +t^2} \mathbf{I}_d.
\end{align*}
Recall that $(X_t)_{t \in [0, 1]}$ satisfies the IVP (\ref{main-ode}), then we have
\begin{equation*}
    \nabla V(t, x) = \frac{\nabla^2 \log  \EuScript{Q}_{1-t} r(x)}{t}, \quad r(x) :=\frac{p(x)}{\phi(x)}.
\end{equation*}
Since $p \sim N\left(0,\frac{1}{\beta} \mathbf{I}_d \right)$, then it yields
\begin{equation*}
    r(x) = \beta^{d/2} \exp\left(-\frac{\beta -1}{2} |x|^2 \right) \propto \exp\left(-\frac{\beta -1}{2} |x|^2 \right),
\end{equation*}
where the symbol $\propto$ signifies equality up to a constant which does not depend on $x$. Then by straightforward calculation for $\EuScript{Q}_{1-t} r(x)$, we obtain
\begin{align*}
 \EuScript{Q}_{1-t} r(x) 
& \propto \int_{\mathbb R^d} \exp \left \lbrace -\frac{\beta -1}{2} \left|tx +\sqrt{1-t^2} y \right|^2 -\frac{|y|^2}{2} \right \rbrace \diff y \\
 &= \int_{\mathbb R^d} \exp \left \lbrace -\frac{(\beta -1)t^2}{2}|x|^2 -(\beta -1)t \sqrt{1-t^2} \left<x,y \right>   -\frac{\beta(1-t^2) +t^2}{2} |y|^2 \right \rbrace \diff y \\
 &= \exp\left(-\frac{(\beta-1)t^2 |x|^2 }{2\beta_t} \right) \int_{\mathbb R^d} \exp\left \lbrace -\frac{\beta_t}{2} \left|y +\frac{(\beta -1)t \sqrt{1-t^2} }{\beta_t} x \right|^2 \right \rbrace \diff y,
 \end{align*}
where we denote $\beta_t :=(1-t^2) \beta +t^2 $. Considering that the integrand in the last line is proportional to the density of a Gaussian measure, then the value of the integral does not depend on $x$, and
\begin{align*}
    \EuScript{Q}_{1-t} r(x) 
    \propto \exp\left(-\frac{(\beta-1)t^2 |x|^2 }{2\beta_t} \right)  =\exp\left(-\frac{|x|^2}{2} \cdot \frac{(\beta -1)t^2}{(1-t^2)\beta +t^2} \right).
\end{align*}
So we have
\begin{equation*}
    \nabla V(t, x) =\frac{\nabla^2 \EuScript{Q}_{1-t} r(x)}{t} =\frac{(1-\beta) t}{(1-t^2) \beta +t^2} \mathbf{I}_d.
\end{equation*}

    \item[(3)] Let $p$ be $\kappa$-semi-log-concave. Then for any $t\in [0,1)$, $p^{tx, 1-t^2}$ is $\left(\kappa +\frac{t^2}{1-t^2} \right)$-semi-log-concave because
\begin{align*}
 -\nabla^2 \log \left( p^{tx,1-t^2} (y) \right) 
 =-\nabla^2 \log \left(r(y) \phi(y) \right) -\nabla^2 \log \left(\frac{\varphi^{tx,1-t^2}(y)}{\phi(y) } \right) 
 \succeq \left(\kappa +\frac{t^2}{1-t^2} \right) \mathbf{I}_d
\end{align*}
where we use $ p(y) =r(y) \phi(y)$. If $t\in \left[ \sqrt{ \frac{\kappa} {\kappa - 1} \Id_{\kappa < 0} }, 1 \right]$, then $\kappa +\frac{t^2}{1-t^2} \ge 0$. By the well-known Brascamp-Lieb inequality \cite{bakry2014analysis,brascamp1976extensions}, applied to functions of the form $\mathbb R^d \ni x \mapsto f(x)= \left<x, v \right>$ for any $ v \in \mathbb S^{d-1}$, we obtain
\begin{equation*}
\mathrm{Cov}\left(p^{tx,1-t^2} \right) \preceq \left(\kappa +\frac{t^2}{1-t^2} \right)^{-1} \rmI_d
\end{equation*}
and the result follows by (\ref{eq:cov-vp-vf}).

\item[(4)] On the one hand, we have
\begin{align*}
  p^{tx,1-t^2}(y) 
  =\frac{(N(0,\sigma^2 \mathbf{I}_d) * \rho)(y)}{\varphi^{0,1}(y)} \cdot \frac{\varphi^{tx,1-t^2}(y)}{\EuScript{Q}_{1-t} \left( \frac{N(0,\sigma^2 \mathbf{I}_d) *\rho )}{\varphi^{0,1}} \right) (x) }  =A_{x,t} \int_{\mathbb R^d} \varphi^{z,\sigma^2}(y) \varphi^{\frac{x}{t}, \frac{1-t^2}{t^2}} (y) \rho(\diff z),
\end{align*}
where the constant $A_{x,t}$ depends only on $x$ and $t$. Moreover, we obtain
\begin{equation*}
\ p^{tx,1-t^2}(y)  =\int_{\mathbb R^d} \varphi^{\frac{(1-t^2)z +\sigma^2 tx}{1+(\sigma^2-1)t^2},\frac{\sigma^2(1-t^2)}{1+(\sigma^2-1)t^2}}(y) \tilde \rho(\diff z)
\end{equation*}
where $\tilde \rho$ is a probability measure on $\mathbb R^d$ which is a multiple of $\rho$ by a positive function. In particular, $\tilde \rho$ is supported on the same ball as $\rho$. On the other hand, let $Z \sim \gamma_d$ and $Y \sim \tilde{\rho}$ be independent. Then
\begin{align*}
     \sqrt{\frac{\sigma^2(1-t^2)}{1+(\sigma^2-1)t^2}} Z +\frac{(1-t^2)}{1+(\sigma^2-1)t^2}Y  +\frac{t \sigma^2}{1+(\sigma^2-1) t^2} x \sim p^{tx,1-t^2}.
\end{align*}
Due to $0 \rmI_d \preceq \mathrm{Cov}(Y) \preceq R^2 \rmI_d$, it holds that
\begin{align*}
\frac{\sigma^2 (1-t^2)}{1 + (\sigma^2-1) t^2} \mathbf{I}_d \preceq \mathrm{Cov}(p^{tx,1-t^2}) 
\preceq \frac{\sigma^2(1-t^2)[1+(\sigma^2-1)t^2] + (1-t^2)^2 R^2}{[1+(\sigma^2-1)t^2]^2} \mathbf{I}_d.
\end{align*}
By applying (\ref{eq:cov-vp-vf}) again, it yields
\begin{align*}
 \frac{(\sigma^2 -1)t}{1+(\sigma^2 -1)t^2} \mathbf{I}_d \preceq \nabla V(t, x) \preceq  t \left\{ \frac{ (\sigma^2 - 1) [ 1 + (\sigma^2 - 1) t^2 ] + R^2 } { [ 1 + (\sigma^2 - 1) t^2 ]^2 } \right\} \rmI_d.
\end{align*}
\end{itemize}
This completes the proof of Theorem \ref{thm1}.
\end{proof}

Next, we present an upper bound on $\lambda_{\max} (\nabla  V(t, x))$ and its exponential estimation.

\begin{corollary}\label{cor1}
Let $p$ be a probability measure on $\mathbb R^d$ with $D :=  (1/\sqrt{2}) \mathrm{diam} ( \mathrm{supp} (p))$ and suppose that $p$ is $\kappa$-semi-log-concave with $\kappa \in [0, +\infty)$.
\begin{itemize}
    \item[(1)] If $\kappa D^2 \ge 1$, then
        \begin{equation}
            \label{eq:vp-max-egv-ubd-ge1}
            \lambda_{\max} (\nabla V(t, x)) \le \theta_t := \frac{t (1 - \kappa)} {t^2 (1 - \kappa) + \kappa}.
        \end{equation}
        and
        \begin{equation}
            \label{eq:vp-max-egv-ubd-exp-ge1-time-1}
            \exp \left( \int_{0}^1 \theta_s \diff s \right)
            = \frac{1} {\sqrt{\kappa}}.
        \end{equation}

    \item[(2)] If $\kappa D^2 < 1$, then
        \begin{equation}
            \label{eq:vp-max-egv-ubd-less1}
            \lambda_{\max} (\nabla V(t, x))  \le \theta_t :=
                \begin{cases}
                    \frac{t (t^2 + D^2 - 1)} {(1 - t^2)^2},             \ &t \in [0, t_0], \\
                    \frac{t (1 - \kappa)} {t^2 (1 - \kappa) + \kappa},  \ &t \in [t_0, 1],
                \end{cases}
        \end{equation}
        where $t_0 = \sqrt{ \frac{1 - \kappa D^2} {(1 - \kappa) D^2 + 1}}$ and
        \begin{equation}
            \label{eq:vp-max-egv-ubd-exp-less1}
            \exp \left( \int_{0}^1 \theta_s \diff s \right)
            = \exp \left( \frac{1-\kappa D^2}{2} \right) D.
        \end{equation}
\end{itemize}
\end{corollary}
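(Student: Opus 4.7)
The plan is to combine the two upper bounds on $\lambda_{\max}(\nabla V(t,x))$ from Theorem \ref{thm1} and integrate the resulting piecewise $\theta_t$ explicitly. For Part (1), the hypothesis $\kappa D^2 \geq 1$ forces $\kappa > 0$, so the indicator $\Id_{\kappa<0}$ in Theorem \ref{thm1}-(3) vanishes and the bound $\frac{t(1-\kappa)}{t^2(1-\kappa)+\kappa}$ applies on the whole of $[0,1]$. The substitution $u = s^2(1-\kappa)+\kappa$, $du = 2s(1-\kappa)\,ds$, then reduces $\int_0^1 \theta_s\,ds$ to $\tfrac{1}{2}\int_\kappa^1 u^{-1}\,du = -\tfrac{1}{2}\log\kappa$, and exponentiating gives $1/\sqrt{\kappa}$.

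For Part (2), the idea is to use whichever of the two bounds in Theorem \ref{thm1} is tighter on each subinterval: the ``bounded support'' bound $\frac{t(t^2+D^2-1)}{(1-t^2)^2}$ from part (1) near $t=0$, and the ``semi-log-concave'' bound $\frac{t(1-\kappa)}{t^2(1-\kappa)+\kappa}$ from part (3) near $t=1$. Equating the two and clearing denominators reduces, after cancellation, to a linear equation in $t^2$ whose solution is $t_0^2 = (1-\kappa D^2)/((1-\kappa)D^2+1)$; a direct sign check on the difference of the bounds confirms the first is smaller on $[0,t_0]$ and the second on $[t_0,1]$. When $\kappa < 0$ one must additionally verify $t_0^2 \geq \kappa/(\kappa-1)$ so that Theorem \ref{thm1}-(3) is legitimately applicable at $t=t_0$; cross-multiplying collapses this to the trivial $1 \geq 0$.

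The final step is an explicit computation of $\int_0^1 \theta_s\,ds$ split at $t_0$. The piece on $[t_0,1]$ evaluates, by the same substitution as in Part (1), to $-\tfrac{1}{2}\log(t_0^2(1-\kappa)+\kappa)$; using the identity $t_0^2(1-\kappa)+\kappa = 1/((1-\kappa)D^2+1)$, this becomes $\tfrac{1}{2}\log((1-\kappa)D^2+1)$. The piece on $[0,t_0]$ is handled by $u = 1-s^2$, giving the antiderivative $\frac{D^2}{2(1-s^2)} + \tfrac{1}{2}\log(1-s^2)$; substituting $1-t_0^2 = D^2/((1-\kappa)D^2+1)$ and $t_0^2/(1-t_0^2) = (1-\kappa D^2)/D^2$ yields $\tfrac{1-\kappa D^2}{2} + \log D - \tfrac{1}{2}\log((1-\kappa)D^2+1)$. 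Adding the two pieces, the logarithmic contributions cancel exactly, leaving $\tfrac{1-\kappa D^2}{2} + \log D$, whose exponential is $\exp\bigl(\tfrac{1-\kappa D^2}{2}\bigr)D$.

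The main obstacle is the bookkeeping in Part (2): pinpointing the crossover $t_0$, checking that Theorem \ref{thm1}-(3) remains valid there when $\kappa < 0$, and arranging the two antiderivatives so that the delicate logarithmic cancellation actually produces the clean constant in the statement. None of the individual computations is deep, but the two pieces must be matched precisely at $t_0$ for the final constant to come out right.
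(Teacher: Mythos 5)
Your proof is correct and follows essentially the same route as the paper's: apply the two bounds from Theorem \ref{thm1}, locate the crossover $t_0$ by comparing them, and integrate the piecewise $\theta_t$ explicitly (your substitution $u=1-s^2$ versus the paper's integration by parts is an inessential difference). One small remark: your aside about verifying $t_0^2\ge\kappa/(\kappa-1)$ when $\kappa<0$ is actually outside the scope of this corollary, which restricts to $\kappa\in[0,+\infty)$; the $\kappa<0$ case is handled separately in Corollary \ref{cor2}.
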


\begin{proof}
By Theorem \ref{thm1}, we obtain
\begin{align*}
 \lambda_{\max}(\nabla V(t, x)) \le \frac{tD^2}{(1-t^2)^2} -\frac{t}{1-t^2}, \quad \lambda_{\max}(\nabla V(t, x)) \le \frac{t(1-\kappa)}{\kappa(1-t^2) +t^2}, \quad \forall t\in [0,1].
\end{align*}
By simple algebra calculation, it yields
\begin{align*}
 \frac{t(D^2 +t^2-1)}{(1-t^2)^2} \le \frac{t(1-\kappa)}{\kappa(1-t^2)+t^2}  \quad \text{if and only if} \quad (1+D^2-\kappa D^2)t^2 \le 1-\kappa D^2.
\end{align*}
We consider two cases.
\begin{itemize}
    \item[(1)] $\kappa D^2 \ge 1$: By considering $\kappa D^2=1$, we see that the bound $(1+D^2-\kappa D^2)t^2 \le 1-\kappa D^2 $ cannot hold. So it would be advantageous to use the bound
\begin{align*}
\lambda_{\max}(\nabla V(t, x)) 
\le \theta_t := \frac{t(1-\kappa)}{\kappa(1-t^2) +t^2} 
= \frac{t(1-\kappa)}{t^2(1-\kappa) +\kappa}.
\end{align*}
Next, we will compute $\exp\left(\int_0^1 \theta_t \diff t\right)$ and we first check that the integral $\int_0^1 \theta_t \diff t$ is well-defined. For this reason, we only need to consider whether the sign of the denominator $(1-\kappa)t^2 +\kappa $ is equal to 0.

The only case is $(1-\kappa)t^2 +\kappa =0$ that happens when $t^2_0:= \kappa/(\kappa -1)$. If $\kappa \in (0,1]$, $(1-\kappa) t^2 +\kappa \ne 0$. Thus, $\theta_t$ is integrable on $[0,1]$. If $\kappa >1$, $t_0 >1$. Then $\theta_t$ is integrable on $[0,1]$ as well. The only case is $\kappa = 0$ which results in $t_0 =0$. However, in this case, we cannot have $\kappa D^2 \ge 1$ as $\kappa =0$. Then by simple calculation,
\begin{align*}
 \int_0^1 \theta_t \diff t =(1-\kappa)\int_0^1 \frac{t \diff t}{(1-\kappa)t^2 +\kappa} =-\frac{1}{2}\log \kappa, 
 \quad \exp\left(\int_0^1 \theta_t \diff t \right) \le \frac{1}{\sqrt{\kappa}}.
\end{align*}

\item[(2)] $\kappa D^2 <1$: The condition $(1+D^2 -\kappa D^2) t^2 \le 1-\kappa D^2$ is equivalent to
\begin{equation*}
    t \le \sqrt{\frac{1-\kappa D^2}{1+(1-\kappa) D^2}}
\end{equation*}
since the denominator is nonnegative as $\kappa D^2 <1$. Hence, we define
\begin{equation*}
            \lambda_{\max} (\nabla V(t, x)) \le \theta_t :=
                \begin{cases}
                    \tfrac{t (t^2 + D^2 - 1)} {(1 - t^2)^2},             \ & 0\le t \le t_0 , \\
                    \tfrac{t (1 - \kappa)} {t^2 (1 - \kappa) + \kappa},  \ &  t_0 \le  t \le 1,
                \end{cases}
        \end{equation*}
 where $t_0 := \sqrt{ \tfrac{1 - \kappa D^2} {(1 - \kappa) D^2 + 1}} $. In order to compute integral $\int_0^1 \theta_t \diff t $, we note that, following the discussion in the case $\kappa D^2 \ge 1 $, the denominators $1-t^2$ and $(1-\kappa )t^2 +\kappa $ do not vanish in the intervals $[0,t_0]$ and $[t_0,1]$, respectively. For $t \in [0, t_0]$, using integral by parts, we have
\begin{align*}
& \int_0^{t_0} \frac{t(t^2 +D^2 -1)}{(1-t^2)^2} \diff t  =\frac{1}{2} \int_0^{t_0} (t^2+D^2-1) \diff \left(\frac{1}{1-t^2} \right) \\
& =\left. \frac{t^2+D^2-1}{2(1-t^2)} \right|^{t=t_0}_{t=0} -\frac{1}{2} \int_0^{t_0} \frac{2t}{1-t^2} \diff t = \frac{t^2_0}{2(1-t^2_0)} D^2 +\frac{1}{2} \log(1-t^2_0) \\
& =\frac{1-\kappa D^2}{2} +\frac{1}{2} \log \left(\frac{D^2}{1+(1-\kappa) D^2} \right).
\end{align*}
For $t\in [t_0, 1]$, we have
\begin{align*}
\int_{t_0}^1 \frac{t(1-\kappa)}{\kappa +(1-\kappa)t^2} \diff t =  -\frac{1}{2} \log \left(t^2_0 +(1-t^2_0) \kappa \right) =-\frac{1}{2} \log \left(\frac{1}{1+(1-\kappa) D^2} \right).
\end{align*}
Hence, we obtain
\begin{equation*}
\int_0^1 \theta_t \diff t=\int_0^{t_0} \theta_t \diff t +\int_{t_0}^1 \theta_t \diff t =\frac{1-\kappa D^2}{2} +\log D.
\end{equation*}
Then
\begin{equation*}
\exp\left(\int_0^1 \theta_t \diff t \right) =\exp\left(\frac{1-\kappa D^2}{2} +\log D \right) =D \exp\left(\frac{1-\kappa D^2}{2} \right).
\end{equation*}
\end{itemize}
This completes the proof of Corollary \ref{cor1}.
\end{proof}

\begin{corollary}\label{cor2}
Let $p$ be a probability measure on $\mathbb R^d$ with $D :=  (1/\sqrt{2}) \mathrm{diam} ( \mathrm{supp} (p)) < \infty$ and suppose that $p$ is $\kappa$-semi-log-concave with $\kappa \in (-\infty, 0)$. We have
\begin{equation}
    \label{eq:vp-max-egv-ubd-kappa-nega}
    \lambda_{\max} (\nabla V(t, x)) \le \theta_t :=
        \begin{cases}
            \tfrac{t (t^2 + D^2 - 1)} {(1 - t^2)^2},             \ &t \in [0, t_0] \\
            \tfrac{t (1 - \kappa)} {t^2 (1 - \kappa) + \kappa},  \ &t \in [t_0, 1]
        \end{cases}
\end{equation}
where $t_0 = \sqrt{ \tfrac{1 - \kappa D^2} {(1 - \kappa) D^2 + 1} }$ and
\begin{equation}
    \label{eq:vp-max-egv-ubd-exp-kappa-nega}
    \exp \left( \int_{0}^1 \theta_s \diff s \right)
    = \exp \left( \frac{1-\kappa D^2}{2} \right) D.
\end{equation}
\end{corollary}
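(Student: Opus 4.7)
The plan is to follow the same blueprint as the proof of Corollary \ref{cor1}, case (2), since the algebraic structure is identical once we verify that the bounds from Theorem \ref{thm1} apply on the relevant intervals. From Theorem \ref{thm1}-(1) we get the bounded-support bound
\[
    \lambda_{\max}(\nabla V(t,x)) \le \frac{t(t^2+D^2-1)}{(1-t^2)^2}, \qquad t \in [0,1),
\]
and from Theorem \ref{thm1}-(3) we get the semi-log-concave bound
\[
    \lambda_{\max}(\nabla V(t,x)) \le \frac{t(1-\kappa)}{t^2(1-\kappa)+\kappa}, \qquad t \in \Bigl[\sqrt{\kappa/(\kappa-1)},\, 1\Bigr],
\]
(the lower endpoint coming from the indicator $\Id_{\kappa<0}$ in the hypothesis). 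As in Corollary \ref{cor1}, the two bounds intersect at $t_0 := \sqrt{(1-\kappa D^2)/((1-\kappa)D^2+1)}$, and a direct comparison shows the bounded-support bound is smaller on $[0,t_0]$ while the semi-log-concave bound is smaller on $[t_0,1]$. This yields the piecewise definition of $\theta_t$ stated in \eqref{eq:vp-max-egv-ubd-kappa-nega}.

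Before integrating, I would carry out two checks that are specific to the $\kappa<0$ regime. First, I need to verify the semi-log-concave bound is actually applicable on $[t_0, 1]$, i.e.\ that $t_0 \ge \sqrt{\kappa/(\kappa-1)}$. Clearing the denominators (and paying attention to the sign flip from multiplying by $\kappa-1<0$), this reduces to the identity $-1<0$, so the inclusion holds unconditionally. Second, I need to ensure the denominator $(1-\kappa)t^2+\kappa$ in the semi-log-concave expression does not vanish on $[t_0,1]$, but its unique positive root is precisely $\sqrt{\kappa/(\kappa-1)}<t_0$, so $\theta_t$ is continuous and integrable on $[0,1]$.

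The final step is the integral computation. On $[0,t_0]$, integration by parts with $\tfrac{1}{2}d(1-t^2)^{-1}$ yields
\[
\int_0^{t_0} \frac{t(t^2+D^2-1)}{(1-t^2)^2}\,dt = \frac{1-\kappa D^2}{2} + \frac{1}{2}\log\!\left(\frac{D^2}{1+(1-\kappa)D^2}\right),
\]
while on $[t_0,1]$ a direct substitution gives
\[
\int_{t_0}^{1} \frac{t(1-\kappa)}{(1-\kappa)t^2+\kappa}\,dt = -\frac{1}{2}\log\!\left(\frac{1}{1+(1-\kappa)D^2}\right),
\]
and adding these produces $\int_0^1 \theta_s\, ds = (1-\kappa D^2)/2 + \log D$, hence \eqref{eq:vp-max-egv-ubd-exp-kappa-nega}.

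The main obstacle is really just bookkeeping: since $\kappa<0$ several quantities that were nonnegative in Corollary \ref{cor1} now change sign, so one must carefully track which inequalities flip when clearing denominators. Once those sign checks (especially $t_0 > \sqrt{\kappa/(\kappa-1)}$ and the nonvanishing of $(1-\kappa)t^2+\kappa$ on $[t_0,1]$) are in place, the remainder of the proof is essentially a transcription of the $\kappa D^2<1$ computation from Corollary \ref{cor1}.
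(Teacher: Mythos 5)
Your proposal is correct and follows essentially the same route as the paper's proof: invoke the bounds from Theorem \ref{thm1} parts (1) and (3), verify the needed sign conditions for $\kappa<0$ (namely $0<\sqrt{\kappa/(\kappa-1)}<t_0\le 1$ and nonvanishing of $(1-\kappa)t^2+\kappa$ on $[t_0,1]$), identify the crossover point $t_0$ between the two bounds, and then reuse the integral computation from Corollary \ref{cor1}, case $\kappa D^2<1$. The two sign checks you flag explicitly are exactly the content the paper also verifies before transcribing the earlier integral.
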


\begin{proof}
By Theorem \ref{thm1}, we obtain
\begin{align*}
\lambda_{\max}(\nabla V(t, x))  \le \frac{tD^2}{(1-t^2)^2} -\frac{t}{1-t^2},  \quad \forall t\in [0, 1), \quad
 \lambda_{\max}(\nabla V(t, x)) \le \frac{t(1-\kappa)}{\kappa(1-t^2) +t^2}, \quad \forall t \in \left[\sqrt{\frac{\kappa}{\kappa -1}}, 1 \right].
\end{align*}
Then it yields
\begin{equation*}
\lambda_{\max}(\nabla V(t, x)) \le \frac{t(t^2 +D^2-1)}{(1-t^2)^2}, \quad \forall t\in \left[0, \sqrt{\frac{\kappa}{\kappa -1}} \right).
\end{equation*}
Next, since $0<\sqrt{\tfrac{\kappa}{\kappa -1}} <\sqrt{\tfrac{1-\kappa D^2}{(1-\kappa)D^2 +1}} \le 1 $ and $ \kappa(1-t^2) +t^2 \ge 0 $ for all $t \ge \sqrt{\frac{\kappa}{\kappa -1}}$, then one obtains
\begin{equation*}
\frac{t(t^2 +D^2 -1)}{(1-t^2)^2} \le  \frac{t(1-\kappa)}{\kappa(1-t^2) +t^2}
\end{equation*}
for all $t\in \left[\sqrt{\frac{\kappa}{\kappa -1}}, \sqrt{\frac{-\kappa D^2}{(1-\kappa)D^2 +1}} \right]$. We define
\begin{equation*}
    \lambda_{\max} (\nabla  V(t, x)) \le \theta_t :=
        \begin{cases}
            \tfrac{t (t^2 + D^2 - 1)} {(1 - t^2)^2},             \ &t \in [0, t_0] \\
            \tfrac{t (1 - \kappa)} {t^2 (1 - \kappa) + \kappa},  \ &t \in [t_0, 1]
        \end{cases}
\end{equation*}
where $t_0 := \sqrt{ \tfrac{1 - \kappa D^2} {(1 - \kappa) D^2 + 1} }$. As in the proof of Corollary \ref{cor1}, it holds that
\begin{align*}
\int_0^{t_0} \theta_t \diff t  = \frac{1-\kappa D^2}{2} +\frac{1}{2} \log \left(\frac{D^2}{1+(1-\kappa) D^2} \right), \quad
\int_{t_0}^1 \theta_t \diff t  =-\frac{1}{2} \log \left(\frac{1}{1+(1-\kappa) D^2} \right).
\end{align*}
Then we have
\begin{align*}
\int_0^1 \theta_t \diff t  =\frac{1-\kappa D^2}{2} +\log D, \quad
\exp\left(\int_0^1 \theta_t \diff t \right)  = D \exp\left(\frac{1-\kappa D^2}{2} \right).
\end{align*}
This completes the proof of Corollary \ref{cor2}.
\end{proof}

%%%%%%%%%%%%%%%%%%%%%%%%%%%%%%%%%%%%%%%%%%%%%%%%%%%%%%%%%%%%%%%%%%%%%%%%%%%%%%

\section{Proof of Theorems \ref{application-thm-1}, \ref{application-thm-2} and \ref{thm-q-poincare} }

We start with a differential Lipschitz mapping $T: \mathbb R^d \rightarrow \mathbb R^d$ associated with constant $C$. The following result describes the Lipschitz properties of the derivatives of composite mappings.

\begin{lemma} \label{lem1}
Let $T: \mathbb R^d \rightarrow \mathbb R^d $ be a differential Lipschitz mapping with constant $C$ and let $\zeta: \mathbb R^d \rightarrow \mathbb R$ be a continuously differentiable function. Then
\begin{equation*}
\nabla (\zeta \circ T) = [(\nabla \zeta) \circ T ] \nabla T,
\end{equation*}
where $(\nabla T)(x) : \mathbb R^d \rightarrow \mathbb R^{d\times d}$ be a Jacobian matrix for any $x\in \mathbb R^d$. Furthermore, we obtain
\begin{align*}
|\nabla \zeta(T(x))| 
   \le \|\nabla T(x) \|_{\mathrm{op}} \cdot |(\nabla \zeta) \circ (T(x)) | \le C |(\nabla \zeta) \circ (T(x)) |
\end{align*}
for all $ x \in \mathbb R^d $.
\end{lemma}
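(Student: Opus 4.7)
The plan is to reduce the statement to a straightforward application of the multivariate chain rule, combined with the operator-norm inequality and the standard equivalence between Lipschitz continuity of a $C^1$ map and a uniform bound on its Jacobian.

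First I would invoke the chain rule: since $T \in C^1(\mathbb R^d;\mathbb R^d)$ and $\zeta \in C^1(\mathbb R^d;\mathbb R)$, the composition $\zeta \circ T$ is continuously differentiable and
\begin{equation*}
\nabla(\zeta \circ T)(x) \;=\; \nabla T(x)^{\top}\,(\nabla \zeta)(T(x)),
\end{equation*}
which, in the row-vector convention used in the statement, reads $\nabla(\zeta \circ T) = [(\nabla \zeta)\circ T]\,\nabla T$. This proves the first identity. Taking Euclidean norms and using the defining property $|Mv|\le \|M\|_{\mathrm{op}}|v|$ of the operator norm then yields
\begin{equation*}
\bigl|\nabla(\zeta \circ T)(x)\bigr| \;\le\; \|\nabla T(x)\|_{\mathrm{op}}\cdot \bigl|(\nabla \zeta)(T(x))\bigr|,
\end{equation*}
which is the first of the two displayed inequalities (noting that the left-hand side should be read as the gradient of the composition).

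The remaining step is to translate the $C$-Lipschitz hypothesis on $T$ into the pointwise Jacobian bound $\|\nabla T(x)\|_{\mathrm{op}}\le C$. I would use the fundamental theorem of calculus: for each unit vector $v\in\mathbb S^{d-1}$ and $\varepsilon>0$,
\begin{equation*}
T(x+\varepsilon v)-T(x) \;=\; \int_0^{\varepsilon} \nabla T(x+sv)\, v\,\diff s,
\end{equation*}
so dividing by $\varepsilon$ and letting $\varepsilon \downarrow 0$ (using continuity of $\nabla T$) gives $|\nabla T(x) v|\le C$; taking the supremum over $v$ yields $\|\nabla T(x)\|_{\mathrm{op}}\le C$. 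Substituting this into the previous display produces the outer bound and finishes the proof.

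There is no substantive obstacle: the lemma is essentially a bookkeeping statement packaging the chain rule with the Jacobian–Lipschitz equivalence. The only care required is (i) keeping the row/column conventions consistent so that the product $[(\nabla \zeta)\circ T]\,\nabla T$ has the right shape, and (ii) reading the left-hand side of the inequality as $|\nabla(\zeta\circ T)(x)|$ rather than $|(\nabla\zeta)(T(x))|$ as the statement might literally suggest.
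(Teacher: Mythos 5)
Your proposal is correct and is exactly the argument the paper has in mind: the authors explicitly skip the proof, saying it follows ``by using the chain rule and the Lipschitz mapping $T$,'' which is precisely what you spell out. You also rightly flag the typo in the final display, where the left-hand side should read $|\nabla(\zeta\circ T)(x)|$ rather than $|\nabla\zeta(T(x))|$ for the chain of inequalities to make sense.
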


Since the proof of this result is almost trivial by using the chain rule and the Lipschitz mapping $T$, we omit it here. Through Lemma \ref{lem1}, we can start the proofs of the functional inequalities which follow from Theorems \ref{main-thm2} and \ref{main-thm3}. We first begin with the $\Psi$-Sobolev inequalities defined in \cite{chafai2004entropies}.

\subsection{Proof of Theorem \ref{application-thm-1} }
\begin{proof}
\begin{itemize}
    \item[(1)] It can be seen from \cite[Corollary 2.1]{chafai2004entropies} that for standard Gaussian measure $\gamma_d$ on $\mathbb R^d$, we have the following $\Psi$-Sobolev inequalities:
    \begin{equation}\label{eq:Sobolev ineq}
    \mathrm{Ent}^{\Psi}_{\gamma_d}(F) \le \frac{1}{2} \int_{\mathbb R^d} \Psi''(F) |\nabla F|^2 \diff \gamma_d
    \end{equation}
for any smooth function $F: \mathbb R^d \rightarrow \mathcal I $. Let $(X_t)_{t\in [0,1]}$ be the solution of IVP (\ref{main-ode}) so that $X_1 \sim p $ if $X_0 \sim N(0,\mathbf{I}_d)$. Suppose that $X_1(x): \mathbb R^d \rightarrow \mathbb R^d$ is a Lipschitz mapping with constant $C$ and let $F :=\zeta \circ X_1: \mathbb R^d \rightarrow \mathcal I$ with $\zeta: \mathbb R^d \rightarrow \mathcal I$. Then combining Lemma \ref{lem1}, (\ref{eq:Sobolev ineq}) and $p= \gamma_d \circ (X_1)^{-1}$ we have
\begin{align*}
\mathrm{Ent}^{\Psi}_{p}(\zeta) 
 =\mathrm{Ent}^{\Psi}_{\gamma_d}(F) 
 \le \frac{1}{2} \int_{\mathbb R^d} \Psi''(F) |\nabla F|^2 \diff \gamma_d 
& \le \frac{C^2}{2} \int_{\mathbb R^d} \Psi''(\zeta \circ X_1) |\nabla \zeta \circ X_1|^2 \diff \gamma_d \\
& = \frac{C^2}{2} \int_{\mathbb R^d} \Psi''(\zeta(x)) |\nabla \zeta(x)|^2 p(x) \diff x.
\end{align*}
The proof is complete by Theorems \ref{main-thm2} and \ref{main-thm3}.

\item[(2)] Let random vector $Y \sim \rho$, let $\tilde \rho$ be the law of $\Sigma^{-1/2} Y$, and define $\tilde p:=\gamma_d * \tilde \rho $. Set $\lambda_{\min} :=\lambda_{\min}(\Sigma)$ and $\lambda_{\max} :=\lambda_{\max}(\Sigma)$. Then combining Lemma \ref{lem1}, (\ref{eq:Sobolev ineq}) and $\tilde p= \gamma_d \circ (X_1)^{-1}$, we have
\begin{equation*}
\mathrm{Ent}^{\Psi}_{\tilde p}(\zeta) \le \frac{\exp(\lambda^{-1}_{\min} R^2)}{2} \int_{\mathbb R^d} \Psi''(\zeta(x)) |\nabla \zeta(x)|^2 \tilde p(x) \diff x.
\end{equation*}
Let $p=N(a,\Sigma) * \rho$ and let $\tilde X \sim \tilde p$ such that
\begin{align*}
\Sigma^{1/2} \tilde X +a 
 =\Sigma^{1/2}\left(X+\Sigma^{-1/2} Y \right) +a 
 =\left( \Sigma^{1/2} X +a \right)+Y \sim p=N(a,\Sigma) * \rho,
\end{align*}
where $X\sim N(0,\mathbf{I}_d)$. Given $\zeta: \mathbb R^d \rightarrow \mathcal I$ and let $\tilde \zeta(x) :=\zeta(\Sigma^{1/2} x+ a)$ so that
\begin{align*}
\mathrm{Ent}^{\Psi}_{p}(\zeta) 
 =\mathrm{Ent}^{\Psi}_{\tilde p}(\tilde \zeta) 
\le \frac{\exp(\lambda^{-1}_{\min} R^2)}{2} \int_{\mathbb R^d} \Psi''(\tilde \zeta(x)) |\nabla \tilde \zeta(x)|^2 \tilde p(x) \diff x.
\end{align*}
Since $(\nabla \tilde \zeta)(x)=\Sigma^{1/2} \left(\nabla \zeta(\Sigma^{1/2}x +a) \right)$, we get
\begin{equation*}
|(\nabla \tilde \zeta)(x)|^2 \le \lambda_{\max} \left|\left(\nabla \zeta (\Sigma^{1/2}x +a) \right) \right|^2.
\end{equation*}
Furthermore, it yields that
\begin{align*}
 \mathrm{Ent}^{\Psi}_{p}(\zeta)  \le \frac{\lambda_{\max} \exp(\lambda^{-1}_{\min} R^2)}{2} \int_{\mathbb R^d} \Psi''(\zeta(x)) |\nabla \ \zeta(x)|^2 p(x) \diff x.
\end{align*}
\end{itemize}
This completes the proof.
\end{proof}

%%%%%%%%%%%%%%%%%%%%%%%%%%%%%%%%%%%%%%%%%%%%%%%%%%%%%%%%%%%%%%%%%%%%%%%%%%%%%
\subsection{Proof of Theorem \ref{application-thm-2} }

\begin{proof}
\begin{itemize}
\item[(1)] 
By using \cite[Theorem 4.3]{ledoux1996isoperimetry}, then the Gaussian measure $\gamma_d$ on $\mathbb R^d$ satisfies the following Gaussian isoperimetric inequality:
\begin{equation*}
    \gamma_d (K_t) \ge \Phi\left(\gamma_d(K) +t \right), \quad t \geq 0
\end{equation*}
for any Borel measurable set $K_t :=K+t B_2^d$ and $K \subseteq \mathbb R^d$. Therefore, suppose $(X_t)_{t\in [0,1]}$ be the solution of IVP (\ref{main-ode}) so that $X_1 \sim p $ if $X_0 \sim N(0,\mathbf{I}_d)$. Moreover, suppose that $X_1(x): \mathbb R^d \rightarrow \mathbb R^d$ is a Lipschitz mapping with constant $C$, then for any fixed $ x \in \mathbb R^d$,
\begin{equation*}
 |X_1(x+ y) -X_1(x)| \le C |y|, \quad \forall y \in \mathbb R^d.
\end{equation*}
We first show the following result:
\begin{equation}
\label{eq:iso-inequality}
X^{-1}_1(E) +\frac{t}{C} B_2^d \subseteq X^{-1}_1(E_t), \quad E_t :=E +t B_2^d
\end{equation}
for any Borel measurable set $E \subseteq \mathbb R^d$ and $t\geq 0$. 
To obtain (\ref{eq:iso-inequality}), we only need to prove that
\begin{equation*}
    X_1\left(X^{-1}_1(E) +\frac{r}{C} B_2^d \right) \subseteq E_t, \quad t\geq 0
\end{equation*}
or, in other words, if $x \in X^{-1}_1(E) +\frac{t}{C} B_2^d$, then $X_1(x) \in E_t$ for any Borel measurable set $K$. Furthermore, if we assume
\begin{equation*}
 x \in X^{-1}_1(E) +\frac{t}{C} B_2^d \quad \text{so that} \quad  x=\theta +\frac{t}{C} h
\end{equation*}
for some $\theta \in X^{-1}_1(E)$ and $ h \in B_2^d$, we have $X_1\left(x -\frac{t}{C} h \right) \in E$. Then it yields that
\begin{equation*}
\left|X_1\left( x-\frac{t}{C} h \right) -X_1(x) \right| \leq t, \quad t\geq 0
\end{equation*}
where $ x -\frac{t}{C} h \in X^{-1}_1(E)$. Therefore, $X_1(x) \in E_t$ as desired. Finally, combining the Gaussian isoperimetric inequality and (\ref{eq:iso-inequality}), it yields
\begin{align*}
 p(E_t) =\gamma_d\left(X^{-1}_1(E_t) \right) 
  \ge \gamma_d\left(X^{-1}_1(E) +\frac{t}{C} B_2^d \right) 
  \geq \Phi\left(\gamma_d\left[X^{-1}_1(E) +\frac{t}{C} \right] \right) 
  =\Phi\left(p(E)+ \frac{t}{C} \right).
\end{align*}
This proof is completed by Theorems \ref{main-thm2} and \ref{main-thm3}.

\item[(2)] Let random vector $Y \sim \rho$, let $\tilde \rho$ be the law of $\Sigma^{-1/2} Y$, and define measure $\tilde p:= \gamma_d * \tilde \rho$. Set $\lambda_{\min} :=\lambda_{\min}(\Sigma)$ and $\lambda_{\max} :=\lambda_{\max}(\Sigma)$. Similar to the argument of part (1), for any Borel set $E \subset \mathbb R^d$ and $t \ge 0$, we obtain
\begin{equation*}
  \tilde p\left( E_t \right) \ge \Phi\left(\tilde p(E) + \frac{t}{C}\right), C:= \left( \lambda_{\min} \right)^{1/2} \exp\left(\frac{R^2}{2\lambda_{\min}} \right).
\end{equation*}
Let $p=N(a,\Sigma) * \rho$ and let $\tilde X \sim \tilde p$ so that \begin{equation*}
\Sigma^{1/2} \tilde X +a =\left( \Sigma^{1/2} X +a \right)+Y \sim p=N(a,\Sigma) * \rho
\end{equation*}
and $X\sim N(0,\mathbf{I}_d)$. Then for any Borel measurable set $E \subset \mathbb R^d$ and $t \ge 0$, it yields that
\begin{align*}
    p(E_t) =\tilde p \left(\Sigma^{-1/2}(E-a) +\Sigma^{-1/2} tB_2^d \right)  \ge \tilde p\left(\Sigma^{-1/2} (E-a) +t \lambda^{-1/2}_{\max} B_2^d \right).
\end{align*}
Hence, we obtain
\begin{equation*}
p(E_t) \ge \Phi\left(\tilde p\left[\Sigma^{-1/2} (E-a) \right] +\frac{t \lambda^{-1/2}_{\max}}{C} \right).
\end{equation*}
We obtain the desired result by applying $\tilde p\left[\Sigma^{-1/2}(E-a) \right] = p(E)$.
\end{itemize}
This completes the proof.
\end{proof}

%%%%%%%%%%%%%%%%%%%%%%%%%%%%%%%%%%%%%%%%%%%%%%%%%%%%%%%%%%%%%%%%%%%%%%%%%%%%%%%%%%%
\subsection{Proof of Theorem \ref{thm-q-poincare} }

\begin{proof}
\begin{itemize}
\item[(1)] We will use the fact that \cite[Proposition 3.1]{nourdin2009second} the $q$-Poincar{\'e} inequality holds for the standard Gaussian measure $\gamma_d$ on $\mathbb R^d$:
\begin{equation}\label{eq:Poincare ineq}
\mathbb E_{\gamma_d} \left[F^q \right] \le (q-1)^{q/2} \mathbb E_{\gamma_d} \left[|\nabla F|^q \right],
\end{equation}
for any smooth function $F \in L^q(\gamma_d)$ with $\mathbb E_{\gamma_d}[F] =0$. Let $(X_t)_{t\in [0,1]}$ be the solution of IVP \paref{main-ode} so that $X_1 \sim p $ if $X_0 \sim N(0,\mathbf{I}_d)$. Suppose that $X_1(x): \mathbb R^d \rightarrow \mathbb R^d$ is a Lipschitz mapping with constant $C$ and let $F :=\eta \circ X_1$. Then combining Lemma \ref{lem1}, (\ref{eq:Poincare ineq}) and $p= \gamma_d \circ (X_1)^{-1}$ we have
\begin{align*}
\mathbb E_p[\eta^q] =\mathbb E_{\gamma_d}[F^q] 
 \leq (q-1)^{q/2} \mathbb E_{\gamma_d}[|\nabla F|^q] \leq  C^q (q-1)^{q/2} \mathbb E_p[|\nabla \eta|^q].
\end{align*}
The proof is complete by Theorems \ref{main-thm2} and \ref{main-thm3}.

\item[(2)] Let $Y \sim \rho$, let $\tilde \rho$ be the law of $\Sigma^{-1/2} Y$, and define $\tilde p:=N(0,\mathbf{I}_d) * \tilde \rho$. Set $\lambda_{\min} :=\lambda_{\min}(\Sigma) $ and $\lambda_{\max} :=\lambda_{\max}(\Sigma) $. The argument of part (1) gives,
\begin{equation*}
\mathbb E_{\tilde p} [\eta^q] \le \exp\left(\frac{q R^2}{2 \lambda_{\min}} \right) \lambda_{\min}^{q/2} (q-1)^{q/2}  \mathbb E_{\tilde p}[ |\nabla \eta|^q].
\end{equation*}
Let $p=N(a,\Sigma) * \rho $ and let $\tilde X \sim \tilde p$ such that \begin{equation*}
\Sigma^{1/2} \tilde X +a =\left( \Sigma^{1/2} X +a \right)+Y \sim p=N(a,\Sigma) * \rho
\end{equation*}
and $X \sim N(0,\mathbf{I}_d)$. Let $\tilde \eta(x) :=\eta\left(\Sigma^{1/2} x +a \right) $ so that
\begin{align*}
\mathbb E_p[\eta^q ] 
 =\mathbb E_{\tilde p} \left[(\tilde \eta )^q \right]  \leq \exp\left(\frac{q R^2}{2 \lambda_{\min}} \right) \lambda_{\min}^{q/2} (q-1)^{q/2}  \mathbb E_{\tilde p}[ |\nabla \tilde \eta|^q].
\end{align*}
Since $(\nabla \tilde \eta)(x)=\Sigma^{1/2} \left( \nabla \eta\left(\Sigma^{1/2} x+ a \right) \right)$ we have
\begin{equation*}
 \left|(\nabla \tilde \eta)(x) \right|^q \le (\lambda_{\max})^{q/2} \left| \nabla \eta \left(\Sigma^{1/2} x + a \right) \right|^q.
\end{equation*}
Further, we obtain
\begin{align*}
 \mathbb E_p[\eta^q] = \mathbb E_{\tilde p} \left[(\tilde \eta)^q \right]  \leq  (\lambda_{\min} \lambda_{\max})^{q/2} \exp\left(\frac{q R^2}{2 \lambda_{\min}} \right) (q-1)^{q/2} \mathbb E_p[|\nabla \eta |^q].
\end{align*}
\end{itemize}
This completes the proof.
\end{proof}

%%%%%%%%%%%%%%%%%%%%%%%%%%%%%%%%%%%%%%%%%%%%%%%%%%%%%%%%%%%%%%%%
%%%%%%%%%%%%%%%%%%%%%%%%%%%%%%%%%%%%%%%%%%%%%%%%%%%%%%%%%%%%%%%%%%%%%%%%%%%%%%
\section{Time changes}

\begin{lemma} \label{lm:sde-time-change}
Let $(\overline{X}_t)_{t\in [0,1)}$ be a diffusion process defined by (\ref{eq:vp-sde-tr}) with $\varepsilon \to 0$ and 
let $(\overline{Y}_s)_{s \ge 0}$ be an Ornstein-Uhlenbeck process $(\overline{Y}_s)_{s \ge 0}$ defined by
\begin{equation} \label{eq:ou-sde}
\diff \overline{Y}_s = - \overline{Y}_s \diff s +\sqrt{2} \diff \overline{W}_s, \quad \overline{Y}_0 \sim \nu, \quad s \ge 0.
\end{equation}
Then $(\overline{X}_t)_{t\in [0,1)}$ is equivalent to $(\overline{Y}_s)_{s \ge 0}$ through the change of time formula $t = 1 - e^{-s}$.
\end{lemma}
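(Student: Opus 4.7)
The plan is to verify directly that the time-changed process $s \mapsto \overline{X}_{1-e^{-s}}$ satisfies the Ornstein-Uhlenbeck SDE \paref{eq:ou-sde}, by substituting $t = \tau(s) := 1 - e^{-s}$ into the integrated form of \paref{eq:vp-sde-tr} and checking that both the drift and the diffusion integrals transform as required. Note that $\tau'(s) = e^{-s} = 1 - \tau(s)$, which is the algebraic identity that makes the singular $(1-t)^{-1}$ factors cancel.

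First, starting from the integral form of \paref{eq:vp-sde-tr},
\begin{equation*}
\overline{X}_t = \overline{X}_0 - \int_0^t \frac{1}{1-u}\overline{X}_u \diff u + \int_0^t \sqrt{\frac{2}{1-u}} \diff \overline{W}_u,
\end{equation*}
I would change variables $u = \tau(r)$ in the drift integral. Using $\diff u = \tau'(r) \diff r = (1-\tau(r)) \diff r$, the factor $(1-\tau(r))^{-1}$ cancels exactly, so the drift becomes $-\int_0^s \overline{Y}_r \diff r$ where $\overline{Y}_r := \overline{X}_{\tau(r)}$ and $\tau(s) = t$.

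Next, for the stochastic integral, I would invoke the standard result on time changes of continuous martingales (for instance Dambis-Dubins-Schwarz, or the deterministic time change formula for It\^o integrals, e.g.\ \cite[Chapter V, Proposition 1.5]{revuz2013continuous}). The martingale $M_t := \int_0^t \sqrt{2/(1-u)} \diff \overline{W}_u$ has quadratic variation
\begin{equation*}
\langle M \rangle_{\tau(s)} = \int_0^{\tau(s)} \frac{2}{1-u} \diff u = \int_0^s \frac{2\,\tau'(r)}{1-\tau(r)} \diff r = 2s,
\end{equation*}
so $M_{\tau(s)}/\sqrt{2}$ is a Brownian motion $\overline{B}_s$ with respect to the filtration $\mathcal{F}_{\tau(s)}$, and $M_{\tau(s)} = \sqrt{2}\,\overline{B}_s$. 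Combining this with the drift computation yields
\begin{equation*}
\overline{Y}_s = \overline{Y}_0 - \int_0^s \overline{Y}_r \diff r + \sqrt{2}\,\overline{B}_s, \quad \overline{Y}_0 = \overline{X}_0 \sim \nu,
\end{equation*}
which is exactly \paref{eq:ou-sde}, establishing distributional equivalence of the two processes up to a relabeling of the Brownian driver.

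The only subtlety is justifying the time change of the It\^o integral and the identification of $\overline{B}_s$ as a genuine Brownian motion in the new time; this is routine but must be done carefully because the integrand $\sqrt{2/(1-u)}$ blows up as $u \uparrow 1$, i.e.\ as $s \to \infty$. Since we work on $t \in [0,1)$, equivalently $s \in [0,\infty)$, on every compact subinterval the integrand is bounded and the time-change formula applies, so the equivalence holds on the full domain. No global integrability is needed because the cancellation $\sqrt{2/(1-\tau(r))}\sqrt{\tau'(r)} = \sqrt{2}$ is pointwise exact, which is what makes the OU driver have constant diffusion coefficient.
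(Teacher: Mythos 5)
Your proof is correct and takes essentially the same route as the paper's: a deterministic time change $t = 1 - e^{-s}$ applied to the SDE \paref{eq:vp-sde-tr}. The paper simply asserts that "applying \paref{eq:vp-sde-tr}" yields the time-changed OU equation and then confirms it by comparing infinitesimal generators; you instead justify the transformation at the level of the integrated SDE, handling the drift by the ordinary change of variables (where $\tau'(r)=1-\tau(r)$ produces the cancellation) and the martingale part by computing $\langle M\rangle_{\tau(s)}=2s$ and invoking the deterministic time-change / Dambis--Dubins--Schwarz identification of $M_{\tau(s)}/\sqrt{2}$ as a new Brownian motion. This fills in precisely the step the paper leaves implicit, and your remark that the equivalence is distributional (up to relabeling the Brownian driver) is the right reading of the lemma's conclusion, since $\overline{W}_{\tau(s)}$ is of course not literally $\overline{W}_s$. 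Your localization to compact $s$-intervals correctly disposes of the blowup of the diffusion coefficient as $t\uparrow 1$. The only thing you omit that the paper includes is the (redundant) generator comparison $L^{\overline{X}}_t=\tfrac{1}{1-t}(\Delta - x\cdot\nabla)$ versus $L^{\overline{Y}}=\Delta-x\cdot\nabla$, which gives a second confirmation of the same fact; your argument does not need it.
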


\begin{proof}
Let $s = -\log(1-t)$ for any $t \in [0, 1)$. 
By applying (\ref{eq:vp-sde-tr}), it yields
\begin{equation*}
\diff \overline{X}_{1-e^{-s}} = - \overline{X}_{1-e^{-s}} \diff s +\sqrt{2} \diff \overline{W}_s, \quad \overline{X}_0 \sim \nu, \quad s \ge 0.
\end{equation*}
On the one hand, since (\ref{eq:ou-sde}) has a unique strong solution, it indicates $\overline{Y}_s = \overline{X}_{1-e^{-s}}$ for all $s \ge 0$. On the other hand, the infinitesimal generator of Markov process $(\overline{Y}_s)_{s \ge 0}$ is given by
\begin{equation}
    \label{eq:gene-ou}
    L^{\overline{Y}} = \Delta - x \cdot \nabla.
\end{equation}
By using (\ref{eq:vp-sde-tr}), the infinitesimal generator of $(\overline{X}_t)_{t\in [0,1)}$ is given by
\begin{equation}
    \label{eq:gene-vp}
    L^{\overline{X}}_t = \frac{1}{1-t} ( \Delta - x \cdot \nabla ).
\end{equation}
Furthermore, combining the chain rule and straightforward calculation, we obtain that processes $ \overline{X}_t$ and $\overline{Y}_s$ have the same infinitesimal generator, which implies $\overline{X}_t =\overline{Y}_s$ for any $t \in [0,1), s = - \log(1 - t)$.
\end{proof}

\begin{lemma} \label{lm:ode-time-change}
Let $(X^*_t)_{t\in [0,1)}$ be the time reversal of a F{\"o}llmer flow associated to probability measure $\nu$ defined by (\ref{eq:vp-ode-eps-tr}) with $\varepsilon \to 0$ and 
let $(Y^*_s)_{s \ge 0}$ be a heat flow from probability measure $\nu$ to the standard Gaussian measure $\gamma_d$ defined by
\begin{align} \label{eq:ode-heat-flow}
 \diff Y^*_s (x) = - \nabla \log \left\{ \int_{\mathbb{R}^d} r \left( e^{-s} Y^*_s (x) + \sqrt{1 - e^{-2s}} z \right) \diff \gamma_d (z) \right\}
\diff s
\end{align}
where $r(x) := (\diff \nu / \diff \gamma_d)(x), Y^*_0 \sim \nu$ for all $s \geq 0$. Then $(X^*_t)_{t\in [0,1)}$ is equivalent to $(Y^*_s)_{s \ge 0}$ through the change of time formula $t = 1 - e^{-s}$.
\end{lemma}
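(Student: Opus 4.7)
The plan is to reduce both ODEs to a common functional form in the operator $\EuScript{Q}$ and then invoke uniqueness. First, I would rewrite the drift of the time-reversed F\"ollmer flow in (\ref{eq:vp-ode-eps-tr}). Using the identity $\nabla \log \EuScript{Q}_{1-t} r(x) = x + S(t, x)$ recorded in the excerpt, the velocity field from Definition~\ref{def:follmer-flow} reads $V(t, x) = t^{-1} \nabla \log \EuScript{Q}_{1-t} r(x)$, and therefore $V(1-t, x) = (1-t)^{-1} \nabla \log \EuScript{Q}_{t} r(x)$. Consequently, $X^*_t$ obeys
\begin{equation*}
\diff X^*_t = -\frac{1}{1-t}\,\nabla \log \EuScript{Q}_{t} r(X^*_t)\,\diff t, \qquad X^*_0 \sim \nu, \quad t \in [0, 1).
\end{equation*}

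Second, I would recast the heat-flow drift in the same language. The Gaussian representation from the excerpt gives
\begin{equation*}
\EuScript{Q}_u r(x) = \int_{\mathbb{R}^d} r\bigl((1-u)x + \sqrt{u(2-u)}\,z\bigr)\,\diff \gamma_d(z),
\end{equation*}
and substituting $u = 1 - e^{-s}$ produces $1-u = e^{-s}$ and $u(2-u) = 1 - e^{-2s}$, so that the integrand inside $\nabla \log\{\cdot\}$ in (\ref{eq:ode-heat-flow}) is exactly $\EuScript{Q}_{1-e^{-s}} r(Y^*_s)$. Hence (\ref{eq:ode-heat-flow}) reads $\diff Y^*_s = -\nabla \log \EuScript{Q}_{1-e^{-s}} r(Y^*_s)\,\diff s$ with $Y^*_0 \sim \nu$.

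Third, I would apply the change of time. Setting $Z_s := X^*_{1-e^{-s}}$, the chain rule together with $\diff t/\diff s = e^{-s} = 1-t$ gives
\begin{equation*}
\diff Z_s = \frac{\diff X^*_t}{\diff t}\bigg|_{t = 1-e^{-s}} e^{-s}\,\diff s = -\nabla \log \EuScript{Q}_{1-e^{-s}} r(Z_s)\,\diff s, \qquad Z_0 \sim \nu,
\end{equation*}
which is the ODE satisfied by $Y^*_s$ with the same initial distribution. By uniqueness of solutions to this ODE---a consequence of the Lipschitz/Sobolev regularity of $\nabla \log \EuScript{Q}_u r$ already established in constructing $(X^*_t)_{t\in[0,1)}$ via Proposition~\ref{prop:vp-ode-eps}---one concludes $Z_s = Y^*_s$, i.e., $X^*_{1-e^{-s}} = Y^*_s$ for every $s \ge 0$, which is the claimed time-change equivalence. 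The only mildly delicate point is extending uniqueness to the entire half-line $s \in [0, \infty)$ (equivalently $t \in [0, 1)$), but this issue is inherited from, and already handled by, the well-posedness discussion for the F\"ollmer flow itself; the change of time simply transports it to the heat-flow parametrization.
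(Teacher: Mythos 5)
Your argument is correct and follows essentially the same route as the paper: substitute $t=1-e^{-s}$ into the ODE \eqref{eq:vp-ode-eps-tr}, observe that the drift becomes exactly $-\nabla\log\EuScript{Q}_{1-e^{-s}}r$, and identify the resulting equation with \eqref{eq:ode-heat-flow}. You usefully spell out the algebra $u(2-u)=1-e^{-2s}$ and make the uniqueness appeal explicit, which the paper leaves implicit; the only small quibble is that the relevant regularity/uniqueness is really grounded in the well-posedness results (Theorem~\ref{thm1} and Theorem~\ref{main-thm1}) rather than in Proposition~\ref{prop:vp-ode-eps}, which is about the push-forward marginals.
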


\begin{proof}
Let $s = -\log(1-t)$ for every $t \in [0, 1)$. 
By (\ref{eq:vp-ode-eps-tr}), it yields
\begin{align*}
 \diff X^*_{1-e^{-s}} (x) = - \nabla \log \left\{ \int_{\mathbb{R}^d} r \left( e^{-s} X^*_{1-e^{-s}} (x) + \sqrt{1 - e^{-2s}} z \right) \diff \gamma_d (z) \right\} \diff s
\end{align*}
where $X^*_0 \sim \nu $ for all $s \geq 0$. The expression above indicates that $Y^*_s := X^*_{1-e^{-s}}$ satisfies (\ref{eq:ode-heat-flow}).
\end{proof}

\end{appendix}

%%%%%%%%%%%%%%%%%%%%%%%%%%%%%%%%%%%%%%%%%%%%%%%%%%%%%%%%%%%%%%%%%%%%%%%%%%%%%%%%%%%%%%%%%%%%%%%%%%%%%%%%%%%%%%%%%%%%%%%%%%%%%%%%
% \vspace{0.4cm}
% \noindent {\bf Notation:}
% We conclude this section by introducing some notation. 
%%%%%%%%%%%%%%%%%%%%%%%%%%%%%%%%%%%%%%%%%%%%%%%%%%%%%%%%%%%%%%%%%%%%%%%%%%%%%%%%%%%%%%%%%%%%%%%%%%%%%%%%%%%%%%%%%%%%%%%%%%%%%%%%

\bibliographystyle{plainnat}

\bibliography{Follmer_Flow}

\begin{thebibliography}{81}
\providecommand{\natexlab}[1]{#1}
\providecommand{\url}[1]{\texttt{#1}}
\expandafter\ifx\csname urlstyle\endcsname\relax
  \providecommand{\doi}[1]{doi: #1}\else
  \providecommand{\doi}{doi: \begingroup \urlstyle{rm}\Url}\fi

\bibitem[Albergo et~al.(2023)Albergo, Boffi, and
  Vanden-Eijnden]{albergo2023stochastic}
Michael~S Albergo, Nicholas~M Boffi, and Eric Vanden-Eijnden.
\newblock Stochastic interpolants: A unifying framework for flows and
  diffusions.
\newblock \emph{arXiv preprint arXiv:2303.08797}, 2023.

\bibitem[Albergo and Vanden-Eijnden(2023)]{albergo2023building}
Michael~Samuel Albergo and Eric Vanden-Eijnden.
\newblock Building normalizing flows with stochastic interpolants.
\newblock In \emph{The Eleventh International Conference on Learning
  Representations}, 2023.

\bibitem[Ambrosio(2004)]{ambrosio2004transport}
Luigi Ambrosio.
\newblock Transport equation and {C}auchy problem for {BV} vector fields.
\newblock \emph{Inventiones mathematicae}, 158\penalty0 (2):\penalty0 227--260,
  2004.

\bibitem[Ambrosio and Crippa(2014)]{ambrosio2014continuity}
Luigi Ambrosio and Gianluca Crippa.
\newblock Continuity equations and ode flows with non-smooth velocity.
\newblock \emph{Proceedings of the Royal Society of Edinburgh Section A:
  Mathematics}, 144\penalty0 (6):\penalty0 1191--1244, 2014.

\bibitem[Andrieu et~al.(2022{\natexlab{a}})Andrieu, Lee, Power, and
  Wang]{andrieu2022comparison}
Christophe Andrieu, Anthony Lee, Sam Power, and Andi~Q Wang.
\newblock Comparison of {M}arkov chains via weak {P}oincar{\'e} inequalities
  with application to pseudo-marginal {MCMC}.
\newblock \emph{The Annals of Statistics}, 50\penalty0 (6):\penalty0
  3592--3618, 2022{\natexlab{a}}.

\bibitem[Andrieu et~al.(2022{\natexlab{b}})Andrieu, Lee, Power, and
  Wang]{andrieu2022poincar}
Christophe Andrieu, Anthony Lee, Sam Power, and Andi~Q Wang.
\newblock Poincar{\'e} inequalities for {M}arkov chains: a meeting with
  {C}heeger, {L}yapunov and {M}etropolis.
\newblock \emph{arXiv preprint arXiv:2208.05239}, 2022{\natexlab{b}}.

\bibitem[Bakry and {\'E}mery(1985)]{bakry1985diffusions}
Dominique Bakry and Michel {\'E}mery.
\newblock Diffusions hypercontractives.
\newblock In \emph{Seminaire de probabilit{\'e}s XIX 1983/84}, pages 177--206.
  Springer, 1985.

\bibitem[Bakry et~al.(2014)Bakry, Gentil, and Ledoux]{bakry2014analysis}
Dominique Bakry, Ivan Gentil, and Michel Ledoux.
\newblock \emph{Analysis and geometry of {M}arkov diffusion operators}, volume
  103.
\newblock Springer, 2014.

\bibitem[Bardet et~al.(2018)Bardet, Gozlan, Malrieu, and
  Zitt]{bardet2018functional}
Jean-Baptiste Bardet, Natha{\"e}l Gozlan, Florent Malrieu, and Pierre-Andr{\'e}
  Zitt.
\newblock Functional inequalities for {G}aussian convolutions of compactly
  supported measures: explicit bounds and dimension dependence.
\newblock \emph{Bernoulli}, 24\penalty0 (1):\penalty0 333--353, 2018.

\bibitem[Block et~al.(2020)Block, Mroueh, and Rakhlin]{block2020generative}
Adam Block, Youssef Mroueh, and Alexander Rakhlin.
\newblock Generative modeling with denoising auto-encoders and {L}angevin
  sampling.
\newblock \emph{arXiv preprint arXiv:2002.00107}, 2020.

\bibitem[Bobkov(2010)]{bobkov2010perturbations}
Sergey~G Bobkov.
\newblock Perturbations in the {G}aussian isoperimetric inequality.
\newblock \emph{Journal of Mathematical Sciences}, 166\penalty0 (3), 2010.

\bibitem[Bogachev et~al.(2015)Bogachev, Krylov, R{\"o}ckner, and
  Shaposhnikov]{bogachev2022fokker}
Vladimir~I Bogachev, Nicolai~V Krylov, Michael R{\"o}ckner, and Stanislav~V
  Shaposhnikov.
\newblock \emph{{F}okker--{P}lanck--{K}olmogorov Equations}, volume 207.
\newblock American Mathematical Society, 2015.

\bibitem[Boissard and Le~Gouic(2014)]{boissard2014mean}
Emmanuel Boissard and Thibaut Le~Gouic.
\newblock On the mean speed of convergence of empirical and occupation measures
  in wasserstein distance.
\newblock \emph{Annales de l'IHP Probabilit{\'e}s et statistiques}, 50\penalty0
  (2):\penalty0 539--563, 2014.

\bibitem[Bolley et~al.(2007)Bolley, Guillin, and
  Villani]{bolley2007quantitative}
Fran{\c{c}}ois Bolley, Arnaud Guillin, and C{\'e}dric Villani.
\newblock Quantitative concentration inequalities for empirical measures on
  non-compact spaces.
\newblock \emph{Probability Theory and Related Fields}, 137:\penalty0 541--593,
  2007.

\bibitem[Bolley et~al.(2012)Bolley, Gentil, and Guillin]{bolley2012convergence}
Fran{\c{c}}ois Bolley, Ivan Gentil, and Arnaud Guillin.
\newblock Convergence to equilibrium in wasserstein distance for fokker--planck
  equations.
\newblock \emph{Journal of Functional Analysis}, 263\penalty0 (8):\penalty0
  2430--2457, 2012.

\bibitem[Brascamp and Lieb(1976)]{brascamp1976extensions}
Herm~Jan Brascamp and Elliott~H Lieb.
\newblock On extensions of the {B}runn-{M}inkowski and {P}r{\'e}kopa-{L}eindler
  theorems, including inequalities for log concave functions, and with an
  application to the diffusion equation.
\newblock \emph{Journal of Functional Analysis}, 22\penalty0 (4):\penalty0
  366--389, 1976.

\bibitem[Caffarelli(2000)]{caffarelli2000monotonicity}
Luis~A Caffarelli.
\newblock Monotonicity properties of optimal transportation and the {FKG} and
  related inequalities.
\newblock \emph{Communications in Mathematical Physics}, 214\penalty0
  (3):\penalty0 547--563, 2000.

\bibitem[Cattiaux and Guillin(2014)]{cattiaux2014semi}
Patrick Cattiaux and Arnaud Guillin.
\newblock Semi log-concave {M}arkov diffusions.
\newblock In Catherine Donati-Martin, Antoine Lejay, and Alain Rouault,
  editors, \emph{S{\'e}minaire de probabilit{\'e}s XLVI}, pages 231--292.
  Springer International Publishing, Cham, 2014.

\bibitem[Cattiaux and Guillin(2022)]{cattiaux2022functional}
Patrick Cattiaux and Arnaud Guillin.
\newblock Functional inequalities for perturbed measures with applications to
  log-concave measures and to some {B}ayesian problems.
\newblock \emph{Bernoulli}, 28\penalty0 (4):\penalty0 2294--2321, 2022.

\bibitem[Chafa{\"\i}(2004)]{chafai2004entropies}
Djalil Chafa{\"\i}.
\newblock Entropies, convexity, and functional inequalities.
\newblock \emph{Journal of Mathematics of Kyoto University}, 44\penalty0
  (2):\penalty0 325--363, 2004.

\bibitem[Chen et~al.(2021)Chen, Chewi, and Niles-Weed]{chen2021dimension}
Hong-Bin Chen, Sinho Chewi, and Jonathan Niles-Weed.
\newblock Dimension-free log-{S}obolev inequalities for mixture distributions.
\newblock \emph{Journal of Functional Analysis}, 281\penalty0 (11):\penalty0
  109--236, 2021.

\bibitem[Chen(2021)]{chen2021almost}
Yuansi Chen.
\newblock An almost constant lower bound of the isoperimetric coefficient in
  the {KLS} conjecture.
\newblock \emph{Geometric and Functional Analysis}, 31\penalty0 (1):\penalty0
  34--61, 2021.

\bibitem[Chewi and Pooladian(2022)]{chewi2022entropic}
Sinho Chewi and Aram-Alexandre Pooladian.
\newblock An entropic generalization of {C}affarelli's contraction theorem via
  covariance inequalities.
\newblock \emph{arXiv preprint arXiv:2203.04954}, 2022.

\bibitem[Chewi et~al.(2020)Chewi, Le~Gouic, Lu, Maunu, Rigollet, and
  Stromme]{chewi2020exponential}
Sinho Chewi, Thibaut Le~Gouic, Chen Lu, Tyler Maunu, Philippe Rigollet, and
  Austin Stromme.
\newblock Exponential ergodicity of mirror-{L}angevin diffusions.
\newblock In H.~Larochelle, M.~Ranzato, R.~Hadsell, M.F. Balcan, and H.~Lin,
  editors, \emph{Advances in Neural Information Processing Systems}, volume~33,
  pages 19573--19585. Curran Associates, Inc., 2020.

\bibitem[Chewi et~al.(2022)Chewi, Erdogdu, Li, Shen, and
  Zhang]{chewi2022analysis}
Sinho Chewi, Murat~A Erdogdu, Mufan Li, Ruoqi Shen, and Shunshi Zhang.
\newblock Analysis of {L}angevin {M}onte {C}arlo from {P}oincar{\'e} to
  log-{S}obolev.
\newblock In Po-Ling Loh and Maxim Raginsky, editors, \emph{Proceedings of
  Thirty Fifth Conference on Learning Theory}, volume 178 of \emph{Proceedings
  of Machine Learning Research}, pages 1--2. PMLR, 02--05 Jul 2022.

\bibitem[Cianchi et~al.(2013)Cianchi, Lutwak, Yang, and
  Zhang]{cianchi2013unified}
Andrea Cianchi, Erwin Lutwak, Deane Yang, and Gaoyong Zhang.
\newblock A unified approach to {C}ram{\'e}r--{R}ao inequalities.
\newblock \emph{IEEE Transactions on Information Theory}, 60\penalty0
  (1):\penalty0 643--650, 2013.

\bibitem[Colombo et~al.(2017)Colombo, Figalli, and
  Jhaveri]{colombo2017lipschitz}
Maria Colombo, Alessio Figalli, and Yash Jhaveri.
\newblock Lipschitz changes of variables between perturbations of log-concave
  measures.
\newblock \emph{Annali della Scuola Normale Superiore di Pisa. Classe di
  scienze}, 17\penalty0 (4):\penalty0 1491--1519, 2017.

\bibitem[Conforti(2022)]{conforti2022weak}
Giovanni Conforti.
\newblock Weak semiconvexity estimates for schr{\"o}dinger potentials and
  logarithmic sobolev inequality for schr{\"o}dinger bridges.
\newblock \emph{arXiv preprint arXiv:2301.00083}, 2022.

\bibitem[Courtade(2020)]{courtade2020bounds}
Thomas~A. Courtade.
\newblock {Bounds on the {P}oincar{\'e} constant for convolution measures}.
\newblock \emph{Annales de l'Institut Henri Poincar{\'e}, Probabilit{\'e}s et
  Statistiques}, 56\penalty0 (1):\penalty0 566 -- 579, 2020.

\bibitem[Cover and Thomas(2005)]{cover2005chapter11}
Thomas~M Cover and Joy~A Thomas.
\newblock \emph{Information Theory and Statistics}, chapter~11, pages 347--408.
\newblock John Wiley \& Sons, Ltd, 2005.

\bibitem[Cram{\'e}r(1946)]{cramer1946mathematical}
Harald Cram{\'e}r.
\newblock \emph{Mathematical methods of statistics}.
\newblock Princeton landmarks in mathematics and physics. Princeton university
  press, Princeton, NJ, 1946.

\bibitem[Danzer et~al.(1963)Danzer, Gr{\"u}nbaum, and Klee]{danzer1963helly}
Ludwig Danzer, Branko Gr{\"u}nbaum, and Victor Klee.
\newblock Helly's theorem and its relatives.
\newblock In \emph{Proceedings of Symposia in Pure Mathematics: Convexity},
  volume VII, pages 101--180, Providence, RI, 1963. American Mathematical
  Society.

\bibitem[Dembo et~al.(1991)Dembo, Cover, and Thomas]{dembo1991information}
Amir Dembo, Thomas~M. Cover, and Joy~A. Thomas.
\newblock Information theoretic inequalities.
\newblock \emph{IEEE Transactions on Information Theory}, 37\penalty0
  (6):\penalty0 1501--1518, 1991.

\bibitem[Dereich et~al.(2013)Dereich, Scheutzow, and
  Schottstedt]{dereich2013constructive}
Steffen Dereich, Michael Scheutzow, and Reik Schottstedt.
\newblock Constructive quantization: Approximation by empirical measures.
\newblock \emph{Annales de l'IHP Probabilit{\'e}s et statistiques}, 49\penalty0
  (4):\penalty0 1183--1203, 2013.

\bibitem[DiPerna and Lions(1989)]{diperna1989ordinary}
Ronald~J DiPerna and Pierre-Louis Lions.
\newblock Ordinary differential equations, transport theory and {S}obolev
  spaces.
\newblock \emph{Inventiones mathematicae}, 98\penalty0 (3):\penalty0 511--547,
  1989.

\bibitem[Djellout et~al.(2004)Djellout, Guillin, and
  Wu]{djellout2004transportation}
Hacene Djellout, Arnaud Guillin, and Liming Wu.
\newblock Transportation cost-information inequalities and applications to
  random dynamical systems and diffusions.
\newblock \emph{The Annals of Probability}, 32\penalty0 (3):\penalty0
  2702--2732, 2004.

\bibitem[Dudley(1969)]{dudley1969speed}
Richard~Mansfield Dudley.
\newblock The speed of mean glivenko-cantelli convergence.
\newblock \emph{The Annals of Mathematical Statistics}, 40\penalty0
  (1):\penalty0 40--50, 1969.

\bibitem[Eldan and Lee(2018)]{eldan2018regularization}
Ronen Eldan and James~R Lee.
\newblock Regularization under diffusion and anticoncentration of the
  information content.
\newblock \emph{Duke Mathematical Journal}, 167\penalty0 (5):\penalty0
  969--993, 2018.

\bibitem[Eldan et~al.(2020)Eldan, Lehec, and Shenfeld]{eldan2020stability}
Ronen Eldan, Joseph Lehec, and Yair Shenfeld.
\newblock Stability of the logarithmic {S}obolev inequality via the
  {F}{\"o}llmer process.
\newblock \emph{Annales de l'Institut Henri Poincar{\'e}, Probabilit{\'e}s et
  Statistiques}, 56\penalty0 (3):\penalty0 2253--2269, 2020.

\bibitem[Fathi et~al.(2020)Fathi, Gozlan, and Prod’homme]{fathi2020proof}
Max Fathi, Nathael Gozlan, and Maxime Prod’homme.
\newblock A proof of the caffarelli contraction theorem via entropic
  regularization.
\newblock \emph{Calculus of Variations and Partial Differential Equations},
  59:\penalty0 1--18, 2020.

\bibitem[F{\"o}llmer(1985)]{follmer1985entropy}
Hans F{\"o}llmer.
\newblock An entropy approach to the time reversal of diffusion processes.
\newblock In Michel M{\'e}tivier and {\'E}tienne Pardoux, editors,
  \emph{Stochastic Differential Systems Filtering and Control}, pages 156--163.
  Springer Berlin Heidelberg, Berlin, Heidelberg, 1985.

\bibitem[F{\"o}llmer(1988)]{follmer1988random}
Hans F{\"o}llmer.
\newblock Random fields and diffusion processes.
\newblock In Paul-Louis Hennequin, editor, \emph{{\'E}cole d'{\'E}t{\'e} de
  Probabilit{\'e}s de Saint-Flour XV--XVII, 1985--87}, pages 101--203, Berlin,
  Heidelberg, 1988. Springer Berlin Heidelberg.

\bibitem[Fournier and Guillin(2015)]{fournier2015rate}
Nicolas Fournier and Arnaud Guillin.
\newblock On the rate of convergence in wasserstein distance of the empirical
  measure.
\newblock \emph{Probability Theory and Related Fields}, 162\penalty0
  (3-4):\penalty0 707--738, 2015.

\bibitem[Gozlan and L{\'e}onard(2007)]{gozlan2007large}
Nathael Gozlan and Christian L{\'e}onard.
\newblock A large deviation approach to some transportation cost inequalities.
\newblock \emph{Probability Theory and Related Fields}, 139\penalty0
  (1-2):\penalty0 235--283, 2007.

\bibitem[Holley and Stroock(1987)]{holley1987logarithmic}
Richard Holley and Daniel Stroock.
\newblock Logarithmic sobolev inequalities and stochastic ising models.
\newblock \emph{Journal of Statistical Physics}, 46\penalty0 (5):\penalty0
  1159--1194, 1987.

\bibitem[Jambulapati et~al.(2022)Jambulapati, Lee, and
  Vempala]{jambulapati2022slightly}
Arun Jambulapati, Yin~Tat Lee, and Santosh~S Vempala.
\newblock A slightly improved bound for the {KLS} constant.
\newblock \emph{arXiv preprint arXiv:2208.11644}, 2022.

\bibitem[Kantorovich and Rubinshtein(1958)]{kantorovich1958space}
Leonid~Vasilevich Kantorovich and SG~Rubinshtein.
\newblock On a space of totally additive functions.
\newblock \emph{Vestnik of the St. Petersburg University: Mathematics},
  13\penalty0 (7):\penalty0 52--59, 1958.

\bibitem[Kim and Milman(2012)]{kim2012generalization}
Young-Heon Kim and Emanuel Milman.
\newblock A generalization of {C}affarelli’s contraction theorem via
  (reverse) heat flow.
\newblock \emph{Mathematische Annalen}, 354\penalty0 (3):\penalty0 827--862,
  2012.

\bibitem[Kinoshita and Suzuki(2022)]{kinoshita2022improved}
Yuri Kinoshita and Taiji Suzuki.
\newblock Improved convergence rate of stochastic gradient {L}angevin dynamics
  with variance reduction and its application to optimization.
\newblock \emph{arXiv preprint arXiv:2203.16217}, 2022.

\bibitem[Klartag(2023)]{klartag2023logarithmic}
Bo'az Klartag.
\newblock Logarithmic bounds for isoperimetry and slices of convex sets.
\newblock \emph{arXiv preprint arXiv:2303.14938}, 2023.

\bibitem[Klartag and Lehec(2022)]{klartag2022bourgain}
Bo'az Klartag and Joseph Lehec.
\newblock Bourgain's slicing problem and {KLS} isoperimetry up to polylog.
\newblock \emph{arXiv preprint arXiv:2203.15551}, 2022.

\bibitem[Klartag and Putterman(2021)]{klartag2021spectral}
Bo'az Klartag and Eli Putterman.
\newblock Spectral monotonicity under {G}aussian convolution.
\newblock \emph{arXiv preprint arXiv:2107.09496}, 2021.

\bibitem[Koehler et~al.(2022)Koehler, Heckett, and
  Risteski]{koehler2022statistical}
Frederic Koehler, Alexander Heckett, and Andrej Risteski.
\newblock Statistical efficiency of score matching: The view from isoperimetry.
\newblock \emph{arXiv preprint arXiv:2210.00726}, 2022.

\bibitem[Ledoux(1996)]{ledoux1996isoperimetry}
Michel Ledoux.
\newblock Isoperimetry and {G}aussian analysis.
\newblock \emph{Lectures on probability theory and statistics}, pages 165--294,
  1996.

\bibitem[Lee et~al.(2022)Lee, Lu, and Tan]{lee2022convergence}
Holden Lee, Jianfeng Lu, and Yixin Tan.
\newblock Convergence for score-based generative modeling with polynomial
  complexity.
\newblock In Alice~H. Oh, Alekh Agarwal, Danielle Belgrave, and Kyunghyun Cho,
  editors, \emph{Advances in Neural Information Processing Systems}, 2022.

\bibitem[Lee and Vempala(2018)]{lee2018kannan}
Yin~Tat Lee and Santosh~S Vempala.
\newblock The {K}annan-{L}ovasz-{S}imonovits conjecture.
\newblock \emph{arXiv preprint arXiv:1807.03465}, 2018.

\bibitem[Lehec(2013)]{lehec2013representation}
Joseph Lehec.
\newblock Representation formula for the entropy and functional inequalities.
\newblock \emph{Annales de l'Institut Henri Poincar{\'e}, Probabilit{\'e}s et
  Statistiques}, 49\penalty0 (3):\penalty0 885--899, 2013.

\bibitem[Lehec(2021)]{lehec2021langevin}
Joseph Lehec.
\newblock The {L}angevin {M}onte {C}arlo algorithm in the non-smooth
  log-concave case.
\newblock \emph{arXiv preprint arXiv:2101.10695}, 2021.

\bibitem[Lei(2020)]{lei2020convergence}
Jing Lei.
\newblock Convergence and concentration of empirical measures under wasserstein
  distance in unbounded functional spaces.
\newblock \emph{Bernoulli}, 26\penalty0 (1):\penalty0 767--798, 2020.

\bibitem[L{\'e}onard(2014)]{leonard2014survey}
Christian L{\'e}onard.
\newblock A survey of the {S}chr{\"o}dinger problem and some of its connections
  with optimal transport.
\newblock \emph{Discrete \& Continuous Dynamical Systems}, 34\penalty0
  (4):\penalty0 1533--1574, 2014.

\bibitem[Li and Wang(2022)]{li2022sharp}
Lei Li and Yuliang Wang.
\newblock A sharp uniform-in-time error estimate for stochastic gradient
  {L}angevin dynamics.
\newblock \emph{arXiv preprint arXiv:2207.09304}, 2022.

\bibitem[Li and Erdogdu(2020)]{li2020riemannian}
Mufan~Bill Li and Murat~A Erdogdu.
\newblock Riemannian {L}angevin algorithm for solving semidefinite programs.
\newblock \emph{arXiv preprint arXiv:2010.11176}, 2020.

\bibitem[Lu and Wang(2022)]{lu2022explicit}
Jianfeng Lu and Lihan Wang.
\newblock On explicit $l^2$-convergence rate estimate for piecewise
  deterministic markov processes in {MCMC} algorithms.
\newblock \emph{The Annals of Applied Probability}, 32\penalty0 (2):\penalty0
  1333--1361, 2022.

\bibitem[Lutwak et~al.(2002)Lutwak, Yang, and Zhang]{lutwak2002cramer}
Erwin Lutwak, Deane Yang, and Gaoyong Zhang.
\newblock The cramer-rao inequality for star bodies.
\newblock \emph{Duke Mathematical Journal}, 112\penalty0 (1):\penalty0 59--81,
  2002.

\bibitem[Mikulincer and Shenfeld(2021)]{mikulincer2021brownian}
Dan Mikulincer and Yair Shenfeld.
\newblock The {B}rownian transport map.
\newblock \emph{arXiv preprint arXiv:2111.11521}, 2021.

\bibitem[Mikulincer and Shenfeld(2022)]{mikulincer2022lipschitz}
Dan Mikulincer and Yair Shenfeld.
\newblock On the lipschitz properties of transportation along heat flows.
\newblock \emph{arXiv preprint arXiv:2201.01382}, 2022.

\bibitem[Neeman(2022)]{neeman2022lipschitz}
Joe Neeman.
\newblock Lipschitz changes of variables via heat flow.
\newblock \emph{arXiv preprint arXiv:2201.03403}, 2022.

\bibitem[Nourdin et~al.(2009)Nourdin, Peccati, and Reinert]{nourdin2009second}
Ivan Nourdin, Giovanni Peccati, and Gesine Reinert.
\newblock Second order {P}oincar{\'e} inequalities and {CLT}s on {W}iener
  space.
\newblock \emph{Journal of Functional Analysis}, 257\penalty0 (2):\penalty0
  593--609, 2009.

\bibitem[Otto and Villani(2000)]{otto2000generalization}
Felix Otto and C{\'e}dric Villani.
\newblock Generalization of an inequality by {T}alagrand and links with the
  logarithmic {S}obolev inequality.
\newblock \emph{Journal of Functional Analysis}, 173\penalty0 (2):\penalty0
  361--400, 2000.

\bibitem[Raginsky et~al.(2017)Raginsky, Rakhlin, and
  Telgarsky]{raginsky2017non}
Maxim Raginsky, Alexander Rakhlin, and Matus Telgarsky.
\newblock Non-convex learning via stochastic gradient {L}angevin dynamics: a
  nonasymptotic analysis.
\newblock In \emph{Conference on Learning Theory}, pages 1674--1703. PMLR,
  2017.

\bibitem[Rao(1945)]{rao1945information}
C~Radhakrishna Rao.
\newblock Information and the accuracy attainable in the estimation of
  statistical parameters.
\newblock \emph{Bulletin of the Calcutta Mathematical Society}, 37:\penalty0
  81--91, 1945.

\bibitem[Revuz and Yor(2013)]{revuz2013continuous}
Daniel Revuz and Marc Yor.
\newblock \emph{Continuous martingales and {B}rownian motion}, volume 293.
\newblock Springer Science \& Business Media, 2013.

\bibitem[Saumard and Wellner(2014)]{saumard2014log}
Adrien Saumard and Jon~A Wellner.
\newblock Log-concavity and strong log-concavity: a review.
\newblock \emph{Statistics Surveys}, 8:\penalty0 45 -- 114, 2014.

\bibitem[Schr{\"o}dinger(1931)]{schrodinger1931uber}
Erwin Schr{\"o}dinger.
\newblock {\"U}ber die {Umkehrung} der {Naturgesetze}.
\newblock \emph{Sitzungsber. Preu{{\ss}}. Akad. Wiss., Phys.-Math. Kl.},
  1931:\penalty0 144--153, 1931.

\bibitem[Shenfeld(2022)]{shenfeld2022exact}
Yair Shenfeld.
\newblock Exact renormalization groups and transportation of measures.
\newblock \emph{arXiv preprint arXiv:2205.01642}, 2022.

\bibitem[Talagrand(1994)]{talagrand1994transportation}
Michel Talagrand.
\newblock The transportation cost from the uniform measure to the empirical
  measure in dimension $\ge 3$.
\newblock \emph{The Annals of Probability}, 22\penalty0 (2):\penalty0 919--959,
  1994.

\bibitem[Talagrand(1996)]{talagrand1996transportation}
Michel Talagrand.
\newblock Transportation cost for gaussian and other product measures.
\newblock \emph{Geometric and Functional Analysis}, 6\penalty0 (3):\penalty0
  587--600, 1996.

\bibitem[Wang and Wang(2016)]{wang2016functional}
Feng-Yu Wang and Jian Wang.
\newblock Functional inequalities for convolution probability measures.
\newblock \emph{Annales de l'Institut Henri Poincar{\'e}, Probabilit{\'e}s et
  Statistiques}, 52\penalty0 (2):\penalty0 898--914, 2016.

\bibitem[Weed and Bach(2019)]{weed2019sharp}
Jonathan Weed and Francis Bach.
\newblock Sharp asymptotic and finite-sample rates of convergence of empirical
  measures in wasserstein distance.
\newblock \emph{Bernoulli}, 25\penalty0 (4):\penalty0 2620--2648, 2019.

\bibitem[Wibisono and Yang(2022)]{wibisono2022convergence}
Andre Wibisono and Kaylee~Yingxi Yang.
\newblock Convergence in {KL} divergence of the inexact {L}angevin algorithm
  with application to score-based generative models.
\newblock \emph{arXiv preprint arXiv:2211.01512}, 2022.

\bibitem[Xu et~al.(2018)Xu, Chen, Zou, and Gu]{xu2018global}
Pan Xu, Jinghui Chen, Difan Zou, and Quanquan Gu.
\newblock Global convergence of {L}angevin dynamics based algorithms for
  nonconvex optimization.
\newblock \emph{Advances in Neural Information Processing Systems}, 31, 2018.

\end{thebibliography}

%%%%%%%%%%%%%%%%%%%%%%%%%%%%%%%%%%%%%%%%%%%%%%%%%%%%%%%%%%%%%%%%%%%%%%%%%%%%%%%%%%%%%%%%%%%%%%%%%%%%%%%%%%%%%%%%%%%%%%%%%%%%%%%%%%

\end{document}